\renewcommand{\a}{\alpha}
\newcommand{\normeq}{\trianglelefteqslant}
 \renewcommand{\L}{\Lambda}
\renewcommand{\l}{\lambda} \renewcommand{\O}{\Omega}
 \renewcommand{\to}{\rightarrow}
 \newcommand{\s}{\sigma}
\newcommand{\la}{\langle}
\newcommand{\ra}{\rangle}
\newcommand{\leqs}{\leqslant}
\newcommand{\geqs}{\geqslant}
 \newcommand{\vs}{\vspace{3mm}}
\def\GL{{\rm GL}}
\newcommand{\imod}[1]{\allowbreak\mkern4mu({\operator@font mod}\,\,#1)}
\newtheorem{theorem}{Theorem}
\newtheorem*{conj*}{Conjecture}
\newtheorem{thm}{Theorem}[section]
\newtheorem{prop}[thm]{Proposition}
\newtheorem{lem}[thm]{Lemma}
\newtheorem{cor}[thm]{Corollary}
\theoremstyle{definition}
\newtheorem{rem}[thm]{Remark}
\newtheorem{example}[thm]{Example}
\newtheorem{remk}{Remark}
\newtheorem*{def-non}{Definition}
\newtheorem*{rem-non}{Remark}
\def\PSp{{\rm PSp}}
\def\GammaL{{\rm \Gamma L}}
\def\AGammaL{{\rm A\Gamma L}}
\def\GL{{\rm GL}}
\def\AGL{{\rm AGL}}
\begin{document}

\author{Timothy C. Burness}
 \address{T.C. Burness, School of Mathematics, University of Bristol, Bristol BS8 1UG, UK}
 \email{t.burness@bristol.ac.uk}

\author{Cai Heng Li}
\address{C.H. Li, SUSTech International Center for Mathematics and Department of Mathematics, Southern University of Science and Technology, Shenzhen 518055, Guangdong, P.R. China}
\email{lich@sustech.edu.cn}

\title[On solvable factors of almost simple groups]{On solvable factors of almost simple groups}

\begin{abstract}
Let $G$ be a finite almost simple group with socle $G_0$. A (nontrivial) factorization of $G$ is an expression of the form $G=HK$, where the factors $H$ and $K$ are core-free subgroups. There is an extensive literature on factorizations of almost simple groups, with important applications in permutation group theory and algebraic graph theory. In a recent paper, Li and Xia describe the factorizations of almost simple groups with a solvable factor $H$. Several infinite families arise in the context of classical groups and in each case a solvable subgroup of $G_0$ containing $H \cap G_0$ is identified. Building on this earlier work, in this paper we compute a sharp lower bound on the order of a solvable factor of every almost simple group and we determine the exact factorizations with a solvable factor. As an application, we describe the finite primitive permutation groups with a nilpotent regular subgroup, extending classical results of Burnside and Schur on cyclic regular subgroups, and more recent work of Li in the abelian case. 
\end{abstract}

\date{\today}
\maketitle

\section{Introduction}\label{s:intro}

A group $G$ is said to be \emph{factorizable} if there exist proper subgroups $H$ and $K$ such that $G=HK$. Such an expression is called a \emph{factorization} of $G$ and $H,K$ are called its \emph{factors}. We say that $G=HK$ is a \emph{maximal} factorization if both factors are maximal subgroups of $G$, and it is an \emph{exact} factorization if $H \cap K=1$. Factorizations arise naturally in permutation group theory; if $G \leqs {\rm Sym}(\O)$ is a transitive permutation group with point stabilizer $K$, then a subgroup $H \leqs G$ is transitive on $\O$ if and only if $G=HK$. Moreover, $H$ is regular if and only if the factorization is exact.

In this paper, we are interested in factorizations of almost simple groups (recall that a finite group $G$ is \emph{almost simple} if $G_0 \leqs G \leqs {\rm Aut}(G_0)$ for a nonabelian simple group $G_0$). In this setting, we say that a factorization $G=HK$ is \emph{nontrivial} if both factors are core-free subgroups of $G$ (that is, neither $H$ nor $K$ contains the socle $G_0$). The problem of determining the (nontrivial) factorizations of almost simple groups has a long history and numerous applications. For example, It\^{o} \cite{ito} determined the factorizations of ${\rm PSL}_{2}(q)$, and the exact factorizations of alternating and symmetric groups were classified by Wiegold and Williams \cite{WW}, and later Liebeck, Praeger and Saxl described the general factorizations of these groups (see \cite[Theorem D]{LPS}). The factorizations of almost simple groups with socle an exceptional group of Lie type are determined in \cite{HLS}.

The maximal factorizations of all almost simple groups were classified by Liebeck, Praeger and Saxl \cite{LPS} and this landmark paper from 1990 has been the foundation for more recent progress on the general problem. For example, Guralnick and Saxl \cite[Theorem 3.1]{GS} apply \cite{LPS} to describe the factorizations $G=HK$ of an almost simple group such that $H/O_p(H)$ is cyclic for some prime $p$, and they use this to study the structure of monodromy groups of polynomials. In a different direction, Giudici \cite{G} works with \cite{LPS} to identify  the factorizations of almost simple groups with socle a sporadic simple group. Much more recently, Li and Xia \cite{LX0} have used \cite{LPS} to determine the factorizations of almost simple groups with the property that one of the factors has at least two non-solvable composition factors.

Factorizations of almost simple groups have many natural applications, especially when combined with the O'Nan-Scott Theorem, which describes the structure and action of finite primitive permutation groups. For example, see \cite{Bau1, Li-Abel-gps,LPS-3,LPS-4} for related work on primitive groups containing transitive subgroups with prescribed properties, and we refer the reader to \cite{Li-Lu-Pan,LPS-2} for applications to the study of Cayley graphs.

Let $G$ be an almost simple group with socle $G_0$. The general problem of determining the factorizations $G=HK$ with $H$ solvable is investigated in depth by Li and Xia in \cite{LX} (as noted above, a special case of this problem was studied in earlier work by Guralnick and Saxl \cite[Theorem 3.1]{GS}). Their main result, \cite[Theorem 1.1]{LX}, describes the possibilities for $G$, $H$ and $K$, and the proof treats each family of almost simple groups in turn. By \cite{HLS}, there are no such factorizations if $G_0$ is an exceptional group of Lie type, and it is straightforward to read off the cases that arise when $G_0$ is a sporadic simple group via \cite{G}. For alternating and symmetric groups, the analysis combines \cite{LPS} with earlier work of Kantor \cite{Kantor} on $k$-homogeneous groups. In particular, if $G_0$ is non-classical, then Li and Xia give a detailed description of the triples $(G,H,K)$ that arise.

Determining the factorizations of an almost simple classical group with a solvable factor is more difficult. Here \cite[Theorem 1.1]{LX} states that if $G$ is such a group with socle $G_0$, then $G=HK$ with $H$ solvable only if one of the following holds:
\begin{itemize}\addtolength{\itemsep}{0.2\baselineskip}
\item[{\rm I.}] $(G_0, H \cap G_0, K\cap G_0)$ belongs to one of 9 infinite families listed in \cite[Table 1.1]{LX};
\item[{\rm II.}] $(G_0, H \cap G_0, K\cap G_0)$ is one of 28 ``sporadic" cases recorded in \cite[Table 1.2]{LX};
\item[{\rm III.}] Both $H$ and $K$ are solvable and $(G,H,K)$ is described in \cite[Proposition 4.1]{LX} (using earlier work of Kazarin \cite{Kaz}); here one infinite family arises (with $G_0 = {\rm PSL}_{2}(q)$), together with 11 ``sporadic" cases listed in \cite[Table 4.1]{LX}.
\end{itemize}

In order to describe the infinite families in \cite[Table 1.1]{LX}, Li and Xia identify specific subgroups $A$ and $B$ of $G_0$ (with $A$ solvable) such that
\[
H \cap G_0 \leqs A, \;\; K \cap G_0 \trianglerighteqslant B,
\]
and in every case they demonstrate the existence of an almost simple group $G$ with socle $G_0$ and a factorization $G=HK$ satisfying the given conditions. For the reader's convenience, the infinite families are listed in Table \ref{tab}, together with the subgroups $A$ and $B$ (here $P_k$ denotes the stabilizer of a $k$-dimensional totally singular subspace of the natural module for $G_0$).
Similarly, a solvable subgroup $A$ of $G_0$ with $H \cap G_0 \leqs A$ is recorded for each of the factorizations in \cite[Table 1.2]{LX}, together with the structure of $K \cap G_0$ (as before, in every case there is a genuine factorization of an almost simple group $G$ satisfying the given conditions). In view of the above, we will refer to Type I, II and III factorizations of an almost simple classical group, accordingly.

\begin{table}
\[
\begin{array}{cllll} \hline
\mbox{Case} & G_0 & A & B & \hspace{3.7mm} \mbox{Conditions} \\ \hline
1 & {\rm PSL}_{n}(q) & \frac{q^n-1}{(q-1)d}{:}n & q^{n-1}{:}{\rm SL}_{n-1}(q) & \hspace{3.7mm} d = (n,q-1) \\
2 & {\rm PSL}_{4}(q) & q^{3}{:}\frac{q^3-1}{d}{:}3 < P_k & {\rm PSp}_{4}(q) & \hspace{-1mm} \left\{\begin{array}{l}
d = (4,q-1) \\ k \in \{1,3\} \end{array}\right. \\
3 & {\rm PSp}_{2m}(q) & q^{\frac{1}{2}m(m+1)}{:}(q^m-1).m<P_m & \O_{2m}^{-}(q) & \hspace{3.7mm} \mbox{$m \geqs 2$, $q$ even} \\
4 & {\rm PSp}_{4}(q) & q^{3}{:}(q^2-1).2<P_1 & {\rm Sp}_{2}(q^2) & \hspace{3.7mm} \mbox{$q$ even} \\
5 & {\rm PSp}_{4}(q) & q^{1+2}{:}\frac{q^2-1}{2}.2<P_1 & {\rm PSp}_{2}(q^2) & \hspace{3.7mm}\mbox{$q$ odd} \\
6 & {\rm PSU}_{2m}(q) & q^{m^2}{:}\frac{q^{2m}-1}{(q+1)d}.m<P_m & {\rm SU}_{2m-1}(q) & \hspace{-1mm} \left\{\begin{array}{l} m \geqs 2 \\ d = (2m,q+1) \end{array}\right. \\
7 & \O_{2m+1}(q) & (q^{\frac{1}{2}m(m-1)}.q^m){:}\frac{q^m-1}{2}.m<P_m & \O_{2m}^{-}(q) & \hspace{3.7mm} \mbox{$m \geqs 3$, $q$ odd} \\
8 & {\rm P\O}_{2m}^{+}(q) & q^{\frac{1}{2}m(m-1)}{:}\frac{q^m-1}{d}.m<P_k & \O_{2m-1}(q) & \hspace{-1mm}\left\{\begin{array}{l} m \geqs 5 \\ d = (4,q^m-1) \\ k \in \{m,m-1\} \end{array}\right. \\
9 & {\rm P\O}_{8}^{+}(q) & q^6{:}\frac{q^4-1}{d}.4<P_k & \O_{7}(q) & \hspace{-1mm} \left\{\begin{array}{l} d = (4,q^4-1) \\ k \in \{1,3,4\} \end{array}\right. \\ \hline
\end{array}
\]
\caption{The Type I factorizations of classical groups in \cite[Table 1.1]{LX}}
\label{tab}
\end{table}

With applications in mind, it is desirable to extend \cite[Theorem 1.1]{LX} by obtaining more information on the Type I factorizations of almost simple classical groups. In particular, we would like to shed more light on the minimal possibilities for a solvable factor in these cases (cf. \cite[Problem 1.7]{LX}).

Our first result gives a sharp lower bound on the order of a solvable factor in every Type I factorization. In the statement, a \emph{Type I.j factorization} refers to a factorization satisfying the conditions of Case j in Table \ref{tab}.

\begin{theorem}\label{t:main}
Let $G$ be an almost simple classical group with socle $G_0$ and suppose $G=HK$ is a Type I.j factorization of $G$. Then there exists an integer $\ell$ and an almost simple group $G_1$ with socle $G_0$ such that
\begin{itemize}\addtolength{\itemsep}{0.2\baselineskip}
\item[{\rm (i)}] $G_1 = H_1K_1$ is a Type I.j factorization of $G_1$; and
\item[{\rm (ii)}] $|H_1| = \ell$ divides $|H|$,
\end{itemize}
where $(G_1,H_1,K_1,\ell)$ is listed in Table $\ref{tab2}$.
\end{theorem}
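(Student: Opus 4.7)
The plan is to argue case by case through the nine families I.1--I.9 of Table \ref{tab}. For each case j, I would (i) exhibit an explicit minimal triple $(G_1, H_1, K_1)$ forming a Type I.j factorization with $|H_1| = \ell$, and then (ii) prove that $\ell$ divides $|H|$ for every Type I.j factorization $G = HK$.

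The construction in (i) is guided directly by Table \ref{tab}. Writing $A = U{:}T$ for the solvable overgroup recorded there (with $U$ the unipotent radical and $T$ a torus quotient), the natural candidate for $H_1 \cap G_0$ is the smallest subgroup of $A$ which, paired with a suitable $K_1$ containing $B$ and an appropriate outer automorphism of $G_0$, yields a genuine factorization. Constructing $G_1$ typically requires adjoining a specific field, diagonal, or graph automorphism to balance orders, while $K_1$ is the smallest almost simple overgroup of $B$ compatible with $G_1$. Each candidate factorization is verified both by the arithmetic check $|G_1| = |H_1||K_1|/|H_1 \cap K_1|$ and by confirming that $H_1 K_1 = G_1$ as a set (not merely that the orders multiply correctly).

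The substantive part is the divisibility claim in (ii). The starting point is the identity
\[
|H \cap G_0||K \cap G_0| = |G_0||H \cap K \cap G_0|,
\]
which follows from a standard reduction of the factorization $G = HK$ to the socle, combined with the hypothesis $K \cap G_0 \geqs B$. I would then analyze the prime factorization of $|H \cap G_0|$: matching $p$-parts (with $p$ the defining characteristic of $G_0$) forces $H \cap G_0$ to contain a Sylow $p$-subgroup of $A$, while matching $p'$-parts forces the image of $H \cap G_0$ in the quotient $A/U \cong T$ to have a prescribed index in $T$. Combining these two constraints with the semidirect structure of $A$ gives $\ell \mid |H \cap G_0|$, and a fortiori $\ell \mid |H|$.

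The main obstacle is the sheer volume of arithmetic case analysis. Each family has its own unipotent and toral structure, and Cases I.2, I.8, and I.9 require separate treatment for each admissible value of $k$ in $P_k$. Boundary cases---small $q$ or small rank, the parity restrictions on $q$ across Cases I.3--I.7, and the exceptional isomorphisms among low-dimensional classical groups---must be verified individually, potentially with computer-algebra assistance for the smallest parameters. A secondary technical difficulty is outer-automorphism bookkeeping: pinning down which almost simple extension $G_1$ attains the minimum requires careful tracking of how subgroups of $A$ meet the inner automorphism group of $G_0$ and which outer automorphisms must be adjoined to secure the factorization.
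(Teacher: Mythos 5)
Your overall architecture (case-by-case through I.1--I.9, exhibit a minimal triple, then prove divisibility) matches the paper's organization, but the two technical mechanisms you propose are respectively wrong and missing, so the proposal has genuine gaps. First, the divisibility argument: your claim that ``matching $p$-parts forces $H \cap G_0$ to contain a Sylow $p$-subgroup of $A$'' is false, and in fact contradicts the theorem's own conclusion. In Case 3, for instance, $A = q^{\frac{1}{2}m(m+1)}{:}(q^m-1).m$ has Sylow $p$-subgroup the full unipotent radical $U$ of order $q^{\frac{1}{2}m(m+1)}$, yet the minimal factor is $H_1 = q^m{:}(q^m-1)$, whose $p$-part is only $q^m$; the same discrepancy occurs in Cases 6, 8 and 9 (see Remark \ref{r:2}). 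The actual mechanism is: $G=HK$ gives $|G:K| \mid |H|$, which forces $H$ to contain an element $x$ of order a primitive prime divisor $r$ of $q^m-1$ (or $q^{2m}-1$, etc.), and one then computes the smallest nontrivial $\langle x\rangle$-invariant subgroup of $U$ (or of $U/Z(U)$) -- this has order $q^m$ in most cases, but equals all of $U$ in Cases 2, 5 and 7, which is why those cases need the full unipotent radical while the others do not. Your proposal cannot distinguish these two regimes. Relatedly, your starting identity $|H\cap G_0||K\cap G_0| = |G_0||H\cap K\cap G_0|$ is unjustified: a factorization $G=HK$ does not in general restrict to a factorization of the socle (the paper's Lemma \ref{l:easy} derives such an identity only for exact factorizations). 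Finally, the parity factor $e\in\{1,2\}$ in Cases 5 and 7 depends on $q^m \bmod 4$ and is obtained by a block-system/transitivity argument on the $U$-orbits, not by arithmetic on indices; pure $p'$-part matching will not produce it.

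Second, the existence half -- verifying that the small candidate $H_1$ really satisfies $G_1 = H_1K_1$ -- is the hardest part of the paper's proof and your plan does not engage with it. ``Confirming that $H_1K_1 = G_1$ as a set'' and checking $|G_1| = |H_1||K_1|/|H_1\cap K_1|$ both presuppose knowledge of $H_1 \cap K_1$, which is precisely what must be computed. The paper's method is to realize $H_1 = W{:}C$ with $C$ a Singer cycle and $W$ an irreducible $C$-summand of $U$, extend scalars to $\mathbb{F}_{q^m}$ to write down $W$ explicitly inside $\bar{U}$, determine the Jordan forms (equivalently the Aschbacher--Seitz involution types, or the ranks of the associated matrices) of all nontrivial elements of $W$, and then apply the orbit-counting lemma with fixed-point counts ${\rm fix}(z) = |C_G(z)|/|C_K(z)|$ read off from centralizer orders to show $H_1$ has exactly one orbit on $G_1/K_1$. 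Without some substitute for this computation your part (i) does not get off the ground, except in Case 1 where transitivity of a Singer cycle on projective points is classical.
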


\begin{remk}\label{r:1}
In Table \ref{tab2}, we use the same numbering of cases as in Table \ref{tab} and we assume all of the conditions stated in the final column of that table. In addition, we adopt the following conventions in Table \ref{tab2}:
\begin{itemize}\addtolength{\itemsep}{0.2\baselineskip}
\item[{\rm (a)}] It is convenient to allow $n=2$ in Case 1, which is strictly speaking a Type III factorization as the parabolic subgroup is solvable (see \cite[Proposition 4.1(a)]{LX}).
\item[{\rm (b)}] In Case 2, the group $G_1 \leqs {\rm PGL}_{4}(q)$ is
\[
G_1 = \left\{\begin{array}{ll}
{\rm PSL}_{4}(q).2 & \mbox{if $q \equiv 1 \imod{4}$} \\
{\rm PSL}_{4}(q) & \mbox{otherwise.}
\end{array}\right.
\]
\item[{\rm (c)}] In Case 6, $N_1$ denotes the stabilizer in $G_1$ of a $1$-dimensional nondegenerate subspace of the natural module.
\item[{\rm (d)}] In Case 7, we define $e=2$ if $q^m \equiv 3 \imod{4}$, otherwise $e=1$. In addition, we write $N_{1}^{-}$ for the stabilizer of a nondegenerate $1$-space $W$ such that $W^{\perp}$ is of minus-type (that is, $W^{\perp}$ has Witt index $m-1$).
\item[{\rm (e)}] In Cases 8 and 9, $N_1$ denotes the stabilizer of a $1$-space $U$, where $U$ is nondegenerate if $q$ is odd and nonsingular if $q$ is even.
\end{itemize}
\end{remk}

\begin{table}
\[
\begin{array}{clllc} \hline
\mbox{Case} &  \hspace{4.7mm} G_1 & H_1 & K_1  & \ell \\ \hline
1 & \hspace{4.7mm} {\rm PGL}_{n}(q) & \frac{q^n-1}{q-1} & P_1 & \frac{q^n-1}{q-1} \\

2  & \hspace{4.7mm} \mbox{See Remark \ref{r:1}(b)}
& q^3{:}\frac{q^3-1}{(2,q-1)} & {\rm PGSp}_{4}(q) & \frac{q^3(q^3-1)}{(2,q-1)} \\

3  & \hspace{4.7mm} {\rm Sp}_{2m}(q) & q^m{:}(q^m-1) & {\rm O}_{2m}^{-}(q) & q^m(q^m-1) \\

4  & \hspace{4.7mm} {\rm Sp}_{4}(q) & q^2{:}(q^2-1) & {\rm Sp}_{2}(q^2).2 & q^2(q^2-1) \\

5  & \hspace{4.7mm} {\rm PGSp}_{4}(q) & q^{1+2}{:}(q^2-1) & {\rm PGSp}_{2}(q^2).2 & q^3(q^2-1) \\

6  & \hspace{4.7mm} {\rm PGU}_{2m}(q) & q^{2m}{:}\frac{q^{2m}-1}{q+1} & N_1 & \frac{q^{2m}(q^{2m}-1)}{q+1} \\

7  & \hspace{4.7mm} {\rm SO}_{2m+1}(q) & (q^{\frac{1}{2}m(m-1)}.q^m){:}\frac{q^m-1}{e} & N_{1}^{-} & \frac{1}{e}q^{\frac{1}{2}m(m+1)}(q^m-1) \\

8  & \left\{\begin{array}{ll}
\O_{2m}^{+}(q) & \mbox{$q$ even} \\
{\rm PSO}_{2m}^{+}(q) & \mbox{$q$ odd}
\end{array}\right.
& q^m{:}\frac{q^m-1}{(2,q-1)} & N_1 & \frac{q^{m}(q^m-1)}{(2,q-1)} \\

9  & \left\{\begin{array}{ll}
\O_{8}^{+}(q) & \mbox{$q$ even} \\
{\rm PSO}_{8}^{+}(q) & \mbox{$q$ odd}
\end{array}\right.
& q^4{:}\frac{q^4-1}{(2,q-1)} & N_1 & \frac{q^4(q^4-1)}{(2,q-1)} \\ \hline
\end{array}
\]
\caption{Type I factorizations $G_1=H_1K_1$ with $|H_1|=\ell$ in Theorem \ref{t:main}}
\label{tab2}
\end{table}

\begin{remk}\label{r:2}
It is interesting to compare the structure of the subgroups $H_1$ in Table \ref{tab2} with the overgroup $A$ of $H \cap G_0$ presented in Table \ref{tab}. With the exception of Case 1, the overgroup $A$ contains the full unipotent radical $U$ of an appropriate parabolic subgroup of $G_0$, but in most cases we find that only a small subgroup of $U$ is needed to construct a solvable factor. Indeed, the main challenge in the proof of Theorem \ref{t:main} is to determine how much of the unipotent radical needs to be included in a solvable factor. In this respect, it is worth highlighting Cases 2, 5 and 7, which are special in the sense that every solvable factor must contain the full unipotent radical (see the proofs of Propositions \ref{p:case2} and \ref{p:case7} for further details).
\end{remk}

Next we turn to the problem of determining an absolute lower bound on the order of a solvable factor of any almost simple group with a given socle. Let $G_0$ be a nonabelian finite simple group and let
\[
\mathcal{S}(G_0) = \{ H \,:\, \mbox{$G = HK$, $H$ solvable, $K$ core-free, ${\rm soc}(G) = G_0$}\}
\]
be the set of solvable factors of almost simple groups with socle $G_0$. Set
\[
\ell(G_0) = \left\{\begin{array}{ll}
\min\{|H|\,:\, H \in \mathcal{S}(G_0)\} & \mbox{if $\mathcal{S}(G_0)$ is nonempty} \\
0 & \mbox{otherwise.}
\end{array}\right.
\]
By inspecting \cite{LX}, we see that $\mathcal{S}(G_0)$ is nonempty if and only if one of the following holds:
\begin{itemize}\addtolength{\itemsep}{0.2\baselineskip}
\item[{\rm (a)}] $G_0 \in \{ A_n, {\rm PSL}_{n}(q), \O_{2m+1}(q), {\rm P\O}_{2m}^{+}(q), {\rm M}_{11}, {\rm M}_{12},{\rm M}_{22},{\rm M}_{23},{\rm M}_{24},{\rm J}_{2}, {\rm HS}, {\rm He}, {\rm Suz}\}$;
\item[{\rm (b)}] $G_0 = {\rm PSU}_{n}(q)$ and either $n$ is even, or $n=3$ and $q \in \{3,5,8\}$;
\item[{\rm (c)}] $G_0 = {\rm PSp}_{2m}(q)$ and either $q$ is even, or $m=2$, or $m=q=3$.
\end{itemize}
By combining Theorem \ref{t:main} with the analysis in \cite{LX}, we can compute $\ell(G_0)$ precisely (see Remark \ref{r:abelian} for comments on the analogous problem for abelian factors).

\begin{theorem}\label{t:main2}
Let $G_0$ be a nonabelian finite simple group with $\ell(G_0)>0$. Then $\ell(G_0)$ is
recorded in Table $\ref{tab:ell}$, together with a triple $(G,H,K)$ where $G=HK$ has socle $G_0$, $H$ is solvable, $K$ is core-free and $|H|=\ell(G_0)$.
\end{theorem}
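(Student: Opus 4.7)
The plan is to derive Theorem \ref{t:main2} by combining Theorem \ref{t:main} with the full classification of solvable factorizations of almost simple groups from \cite{LX}, together with its supporting inputs \cite{HLS, LPS, G, Kantor, Kaz}. For each simple group $G_0$ with $\mathcal{S}(G_0) \neq \emptyset$, I would enumerate every possible source of solvable factors $H$, compute the minimum of $|H|$ across all such sources, and then verify that this minimum is realised by an explicit triple $(G,H,K)$ to be recorded in Table \ref{tab:ell}.

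In more detail, I would work case-by-case through the list of $G_0$ in (a)--(c). For $G_0=A_n$, the factorizations with a solvable factor are controlled by Kantor's classification of $k$-homogeneous groups \cite{Kantor} together with \cite[Theorem D]{LPS}; here the minimum $|H|$ comes from a $2$-transitive or $2$-homogeneous action with a solvable stabilizer complement, and $\ell(A_n)$ can be read off directly. For sporadic socles $G_0 \in \{{\rm M}_{11},\dots,{\rm Suz}\}$, all factorizations $G=HK$ with $H$ solvable are listed explicitly in \cite{G}, and one computes $\ell(G_0)$ by inspection. For classical socles one must combine the three types: Theorem \ref{t:main} already provides the sharp Type I lower bound via Table \ref{tab2}; the 28 Type II entries in \cite[Table 1.2]{LX} each come with an explicit solvable overgroup $A \geqs H\cap G_0$, so $|H|$ is bounded below by $|G:K|$, which can be computed directly; and Type III is controlled by \cite[Proposition 4.1]{LX}, yielding the Kazarin infinite family for $G_0 = {\rm PSL}_2(q)$ and the 11 sporadic cases in \cite[Table 4.1]{LX}.

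Having identified the candidate minimum in each case, the next step is to verify existence of a genuine factorization attaining it. For Type I this is immediate: the triples $(G_1,H_1,K_1)$ in Table \ref{tab2} are explicitly constructed as part of the proof of Theorem \ref{t:main}. For Type II, Type III, alternating and sporadic cases, the existence of a factorization realizing the given overgroup $A$ follows from the corresponding entries in \cite{LX} (for Types II, III) or from \cite{G, LPS, Kantor}. In each situation this produces the triple $(G,H,K)$ with $|H|=\ell(G_0)$ listed in Table \ref{tab:ell}.

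The main obstacle is the \emph{comparison} step: for a given $G_0$ several types of factorizations may be simultaneously available, and one must decide which yields the smallest $|H|$. For instance, $G_0={\rm PSL}_n(q)$ with small $n,q$ sits at the intersection of Type I (Case 1, Singer-type cyclic factor of order $(q^n-1)/(q-1)$), Type III (when $n=2$, where the ``Type I'' Borel subgroup is itself solvable, cf.\ Remark \ref{r:1}(a)), and various Type II sporadic cases; similarly $G_0={\rm PSp}_4(q)$ appears three times in Table \ref{tab2} (Cases 2, 4, 5) and also features in Type II. Careful bookkeeping is therefore required to rule out the other types for each $G_0$, and in particular to confirm that the minimum is attained by the triple recorded in Table \ref{tab:ell} and not by some competing factorization of a different type. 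Once this comparison is made uniformly across all infinite families and sporadic entries, Theorem \ref{t:main2} follows.
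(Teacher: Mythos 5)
Your proposal follows essentially the same route as the paper: Theorem \ref{t:main} supplies the sharp Type I bounds, the Type II and III classical cases are settled by inspecting \cite[Tables 1.2 and 4.1]{LX} (the paper's Lemmas \ref{l:ell2} and \ref{l:ell3}, the former carried out with {\sc Magma}), the alternating and sporadic socles are handled directly, and the final step is the comparison across types for each fixed $G_0$. One caution on your Type II step: the lower bound $|H| \geqs |G:K|$ is not always attained --- for example, with $G_1 = {\rm PGL}_{4}(3)$ and $K_1 = 3^3{:}{\rm GL}_{3}(3)$ one has $|G_1:K_1| = 40$, yet the minimal solvable factor is $2^4{:}5$ of order $80$ --- so identifying the ``candidate minimum'' with $|G:K|$ and then checking attainment would fail in several entries, and the exact value of $\ell$ requires determining which solvable subgroups of the overgroup $A$ genuinely yield factorizations (multiples of $|G:K|$ must be tested in turn); this is precisely the computational content of Lemma \ref{l:ell2}.
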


\begin{table}
\[
\begin{array}{lcl} \hline
G_0 & \ell(G_0) & G,\, H, \, K \\ \hline
A_n & n & S_n,\, C_n,\, S_{n-1} \\
{\rm PSL}_{2}(q),\, q \geqs 5 & q+1 & {\rm PGL}_{2}(q),\, C_{q+1},\, P_1 \\
q \ne 5,7,9,11 & & \\
{\rm PSL}_{n}(q),\, n \geqs 3 & \frac{q^n-1}{q-1} & {\rm PGL}_{n}(q),\, \frac{q^n-1}{q-1},\, P_1 \\
(n,q) \ne (4,2) & & \\
{\rm PSU}_{2m}(q),\, m \geqs 2 & \frac{q^{2m}(q^{2m}-1)}{q+1} & {\rm PGU}_{2m}(q),\, q^{2m}{:}\frac{q^{2m}-1}{q+1},\, N_1 \\
(m,q) \ne (2,2), (2,3), (2,8) & & \\
{\rm PSp}_{2m}(q),\, \mbox{$m \geqs 2$, $q$ even} & q^m(q^m-1) & {\rm PSp}_{2m}(q),\, q^m{:}(q^m-1),\, {\rm O}_{2m}^{-}(q) \\
(m,q) \ne (2,2) & & \\
{\rm PSp}_{4}(q),\, \mbox{$q \geqs 5$ odd} & q^3(q^2-1) & {\rm PGSp}_{4}(q),\,  q^{1+2}{:}(q^2-1),\, {\rm PGSp}_{2}(q^2).2 \\
\O_{2m+1}(q),\, m \geqs 3 & \frac{1}{e}q^{\frac{1}{2}m(m+1)}(q^m-1) & \mbox{See Remark \ref{r:3}(a)} \\
{\rm P\O}_{2m}^{+}(q),\, m \geqs 4 & \frac{1}{d}q^m(q^m-1) & \mbox{See Remark \ref{r:3}(b)} \\
& &  \\
{\rm PSL}_{2}(7) & 7 & {\rm PSL}_{2}(7),\, C_{7},\, S_4 \\
{\rm PSL}_{2}(11) & 11 & {\rm PSL}_{2}(11),\, C_{11},\, A_5 \\
{\rm PSU}_{3}(3) & 216 & {\rm PSU}_{3}(3),\, 3^{1+2}_{+}{:}8,\, {\rm PSL}_{2}(7)  \\
{\rm PSU}_{3}(5) & 1000 & {\rm PSU}_{3}(5),\, 5^{1+2}_{+}{:}8,\, A_7 \\
{\rm PSU}_{3}(8) & 513 & {\rm PSU}_{3}(8).3^2,\, 57{:}9,\, 2^{3+6}{:}(63{:}3) \\
{\rm PSU}_{4}(3) & 162 & {\rm PSU}_{4}(3).2,\, 3^4{:}2,\, {\rm PSL}_{3}(4).2 \\
{\rm PSU}_{4}(8) & 4617 & {\rm PSU}_{4}(8).3,\, (513{:}3).3,\, (2^{12}.{\rm SL}_{2}(64).7).3 \\
{\rm PSp}_{4}(3) & 27 & {\rm PSp}_{4}(3),\, 3_{+}^{1+2},\, 2^4{:}A_5 \\
{\rm M}_{11} & 11 & {\rm M}_{11},\, C_{11},\, {\rm M}_{10} \\
{\rm M}_{12} & 12 & {\rm M}_{12},\, D_{12},\, {\rm M}_{11} \\
{\rm M}_{22} & 22 & {\rm M}_{22}.2, \, D_{22},\, {\rm PSL}_{3}(4).2 \\
{\rm M}_{23} & 23 & {\rm M}_{23},\, C_{23},\, {\rm M}_{22} \\
{\rm M}_{24} & 24 & {\rm M}_{24},\, S_4,\, {\rm M}_{23} \\
{\rm J}_{2} & 100 & {\rm J}_{2}.2,\, 5^2{:}4,\, G_2(2) \\
{\rm HS} & 100 & {\rm HS}.2,\, 5^2{:}4,\, {\rm M}_{22}.2 \\
{\rm He} & 2058 & {\rm He}.2,\, 7_{+}^{1+2}{:}6,\, {\rm Sp}_{4}(4).4 \\
{\rm Suz} & 2916 & {\rm Suz}.2,\, 3^5{:}12,\, G_2(4).2 \\ \hline
\end{array}
\]
\caption{The values of $\ell(G_0)$ in Theorem \ref{t:main2} and a factorization $G=HK$ with ${\rm soc}(G) = G_0$ and $|H|=\ell(G_0)$}
\label{tab:ell}
\end{table}

\begin{remk}\label{r:3}
Some comments on the information in Table \ref{tab:ell} are in order.
\begin{itemize}\addtolength{\itemsep}{0.2\baselineskip}
\item[{\rm (a)}] For $G_0 = \O_{2m+1}(q)$, we define the integer $e \in \{1,2\}$ as in Remark \ref{r:1}(d) and we take
\[
G = {\rm SO}_{2m+1}(q),\;\; H = (q^{\frac{1}{2}m(m-1)}.q^m){:}\frac{q^m-1}{e},\;\; K= N_1^{-}.
\]
\item[{\rm (b)}] For $G_0 = {\rm P\O}_{2m}^{+}(q)$ we set $d=(2,q-1)$ and
\[
G = \left\{\begin{array}{ll}
\O_{2m}^{+}(q) & \mbox{$q$ even} \\
{\rm PSO}_{2m}^{+}(q) & \mbox{$q$ odd,}
\end{array}\right. \;\; H= q^m{:}\frac{q^m-1}{d}, \;\; K=N_1.
\]
\item[{\rm (c)}] In the second row of the table, we have $G_0 = {\rm PSL}_{2}(q)$ with $q \geqs 5$ and $q \ne 5,7,9,11$. Note that $q \ne 5,7,9$ due to the existence of the exceptional isomorphisms ${\rm PSL}_{2}(5) \cong A_5$, ${\rm PSL}_{2}(7) \cong {\rm PSL}_{3}(2)$ and ${\rm PSL}_{2}(9) \cong A_6$. Similarly, we note the isomorphisms ${\rm PSL}_{4}(2) \cong A_8$ and ${\rm PSU}_{4}(2) \cong {\rm PSp}_{4}(3)$ (see \cite[Proposition 2.9.1]{KL}).
\item[{\rm (d)}] For $G_0 = {\rm PSU}_{4}(3)$ we take $G = {\rm PSU}_{4}(3).2 < {\rm PGU}_{4}(3)$.
\end{itemize}
\end{remk}

In \cite{LPS-4}, Liebeck, Praeger and Saxl determine the almost simple primitive permutation groups with a regular subgroup. In other words, they determine the exact nontrivial factorizations of almost simple groups with a maximal factor, and one can read off the examples with a solvable factor. Our next result describes the exact factorizations of almost simple groups with a solvable factor in a more general setting, without assuming that one of the factors is maximal.

In Table~$\ref{tab-sharp}$ we write $\mathcal{O} \leqs C_2$, and $\mathcal{O}_1$, $\mathcal{O}_2$ are subgroups of $\mathcal{O}$ such that $\mathcal{O} = \mathcal{O}_1\mathcal{O}_2$. Also recall that a subgroup of $S_n$ is \emph{$2$-homogeneous} if it acts transitively on the set of subsets of $\{1, \ldots, n\}$ of size $2$.

\begin{theorem}\label{t:main3}
Let $G$ be an almost simple group with socle $G_0$ and $G=HK$ such that $H$ is solvable, $K$ is core-free and $H\cap K=1$. Then one of the following holds:
\begin{itemize}\addtolength{\itemsep}{0.2\baselineskip}
\item[{\rm (i)}] $G_0 = A_n$, $H$ is transitive on $\{1, \ldots, n\}$ and $K = A_{n-1}$ or $S_{n-1}$.
\item[{\rm (ii)}] $G_0 = A_n$, $n=p^a$ is a prime power, $H \leqs {\rm A\Gamma L}_{1}(p^a)$ is $2$-homogeneous on $\{1, \ldots, n\}$ and $A_{n-2} \normeq K \leqs S_{n-2} \times S_2$.
\item[{\rm (iii)}] $G_0 = {\rm PSL}_{n}(q)$, $H \cap G_0 \leqs \frac{q^n-1}{(q-1)d}{:}n$ and $q^{n-1}{:}{\rm SL}_{n-1}(q) \normeq K \cap G_0$ with $d=(n,q-1)$.
\item[{\rm (iv)}] $G_0 = {\rm PSp}_{2m}(q)$ with $q$ even, $m \geqs 3$, $H \cap G_0 \leqs q^{m}{:}(q^m-1).m$ and $K \cap G_0 = \O_{2m}^{-}(q)$.
\item[{\rm (v)}] $(G,H,K)$ is one of the cases listed in Table~$\ref{tab-sharp}$.
\end{itemize}
\end{theorem}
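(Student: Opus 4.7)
My plan is to invoke the classification in \cite[Theorem 1.1]{LX}, which enumerates all factorizations $G=HK$ of almost simple groups with $H$ solvable, and then impose the exactness condition $H\cap K=1$ (equivalently $|G|=|H|\cdot|K|$) on each case, working through Types I, II and III separately. For $G_0=A_n$, I would combine the Wiegold--Williams classification \cite{WW} with Kantor's description \cite{Kantor} of solvable $2$-homogeneous groups: an exact complement to a point stabilizer $K\in\{A_{n-1},S_{n-1}\}$ is precisely a transitive solvable subgroup of ${\rm Sym}(n)$ of the appropriate order, giving (i); an exact complement to the setwise stabilizer of a $2$-subset is a $2$-homogeneous solvable group, which by \cite{Kantor} forces $n=p^a$ with $H\leqs\AGammaL_1(p^a)$, giving (ii). Any other factorization of $A_n$ with solvable factor from \cite{LPS,LX} is ruled out by a direct arithmetic check of $|H|\cdot|K|$ versus $|G|$.

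For Type~I factorizations of classical groups (Table~\ref{tab}), I would use Theorem~\ref{t:main} to obtain the sharp lower bound $\ell$ on $|H|$, then compare $\ell\cdot|{\bf N}_G(B)|$ with $|G|$ to decide whether exactness is arithmetically possible. Direct inspection of Tables~\ref{tab} and~\ref{tab2} reveals that $\ell\cdot[G_0:B]=|G_0|$ in exactly two families: Case~1, where $\ell=(q^n-1)/(q-1)$ and $[{\rm PSL}_n(q):B]=(q^n-1)/(n,q-1)$ match after accounting for the diagonal automorphisms present in $G$, yielding (iii); and Case~3, where $\ell=q^m(q^m-1)=[{\rm Sp}_{2m}(q):\O_{2m}^-(q)]$, yielding (iv). In each of the remaining Cases $2,4,5,6,7,8,9$ the inequality $\ell\cdot[G_0:B]>|G_0|$ is strict for all but finitely many $q$, and I would check the residual small parameter values individually; the surviving triples feed into Table~\ref{tab-sharp}.

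For Type~II (the $28$ sporadic cases in \cite[Table~1.2]{LX}) and Type~III (the ${\rm PSL}_2(q)$ family of It\^o \cite{ito} together with the $11$ sporadic cases in \cite[Table~4.1]{LX}), I would proceed case by case, using the recorded structure of $A\trianglerighteqslant H\cap G_0$ and the explicit $K\cap G_0$, together with the possible ambient groups $G\leqs{\rm Aut}(G_0)$. Testing $|G|=|H|\cdot|K|$ eliminates most entries at once; for the remainder the set-theoretic disjointness $H\cap K=1$ is verified directly, and the surviving triples populate the rest of Table~\ref{tab-sharp}, with the It\^o family contributing the ${\rm PSL}_2(q)$ entries.

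The principal obstacle is passing from the arithmetic condition $|H|\cdot|K|=|G|$ to the set-theoretic identity $H\cap K=1$. For the generic Cases~1 and~3 this requires exhibiting concrete solvable complements: in Case~1 a Singer-like cyclic subgroup of order $(q^n-1)/(q-1)$ extended by the Frobenius field automorphism of order $n$ complements $P_1$ via its regular orbit on $1$-spaces; in Case~3 the semidirect product $q^m{:}(q^m-1)$ inside the parabolic $P_m$ meets $\O_{2m}^-(q)$ trivially, via the transitive action of ${\rm Sp}_{2m}(q)$ on the coset space ${\rm Sp}_{2m}(q)/\O_{2m}^-(q)$. Remark~\ref{r:2} is crucial in ruling out Cases~2, 5 and 7 wholesale: there every solvable factor must contain the full unipotent radical of the relevant parabolic, which forces $|H|$ to strictly exceed $[G_0:K]$ and so obstructs exactness entirely. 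The sporadic verifications in Types~II and~III and in the alternating group analysis each reduce to a finite (and largely computer-assisted) check.
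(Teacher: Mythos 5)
Your overall architecture --- reduce to the Li--Xia classification, treat $A_n$ and the sporadic socles by finite checks, use Theorem \ref{t:main} for the Type I families, and verify Types II and III case by case --- is essentially the paper's (Propositions \ref{p:ex1} and \ref{p:ex2}), but two of your key steps for the classical Type I cases do not work as stated. First, your arithmetic criterion is the wrong comparison: the inequality $\ell\cdot[G_0:B]>|G_0|$ is equivalent to $\ell>|B|$, and this is \emph{false} in every one of Cases 2, 4--9 (for instance in Case 6 one has $\ell\approx q^{4m-1}$ while $|B|=|{\rm SU}_{2m-1}(q)|\approx q^{4m^2-4m}$). The relevant comparison is between $\ell$ (a divisor of $|H|$) and the index $[G:K]$ (which \emph{equals} $|H|$ in an exact factorization); the paper packages this as Lemma \ref{l:easy}, namely that $|H\cap G_0|$ divides $|G_0:K\cap G_0|$. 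With that tool Cases 2, 5, 6, 7, 8 and 9 are eliminated uniformly in $q$, because the $p$-part of $|H\cap G_0|$ forced by Theorem \ref{t:main} (respectively $q^3$, $q^3$, $q^{2m}$, $q^{\frac{1}{2}m(m+1)}$, $q^m$, $q^4$) strictly exceeds the $p$-part of $[G_0:K\cap G_0]$; there are no ``residual small parameter values'' to chase.

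Second, and more seriously, arithmetic cannot dispose of Case 4 (equivalently Case 3 with $m=2$): there $\ell=q^2(q^2-1)$ is exactly $[{\rm Sp}_4(q):{\rm Sp}_2(q^2)]$, so an exact factorization is arithmetically consistent, yet none exists. The paper rules it out by showing that the nontrivial elements of the normal subgroup $X=q^2$ of $H$ are $c_2$-type involutions in the Aschbacher--Seitz notation, hence have fixed points on $G/K$, so $H\cap K\neq1$. The same structural input is what your Case 3 construction actually requires, and your justification there (``via the transitive action of ${\rm Sp}_{2m}(q)$ on ${\rm Sp}_{2m}(q)/\Omega_{2m}^{-}(q)$'') is circular: transitivity of $G$ says nothing about $H\cap K$. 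The paper's argument identifies the involutions in the chosen $q^m$ as type $b_m$ when $m$ is odd and observes that $\Omega_{2m}^{-}(q)$ contains no $b$-type involutions, giving $X\cap K=1$; when $m$ is even the involutions are of type $c_m$, which do lie in $K$, so the construction fails and no example is known (Remark \ref{r:4}(a)). Your proposal misses this parity dichotomy entirely. (A minor further point: in Case 1 the Singer cycle alone already has order $[{\rm PGL}_n(q):P_1]$; adjoining the order-$n$ field element would destroy exactness against $K=P_1$.) The remaining ingredients --- the $A_n$ analysis via Kantor and \cite[Proposition 4.3]{LX}, and the computer-assisted checks for Types II and III --- are in line with what the paper does.
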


\begin{table}
\[
\begin{array}{clll} \hline
\mbox{Case} & G & H & K \\ \hline
1 & A_8.\mathcal{O} & {\rm AGL}_{1}(8) & (A_5 \times 3).2.\mathcal{O} \\
2 & A_8.\mathcal{O} & {\rm A\Gamma L}_{1}(8) & S_5 \times \mathcal{O} \\
3 & A_{25} & 5^2{:}{\rm SL}_{2}(3) & A_{23} \\
4 & S_{25} & 5^2{:}{\rm SL}_{2}(3) & S_{23}, A_{23} \times 2 \\
5 & A_{32}.\mathcal{O} & {\rm A\Gamma L}_{1}(32) & (A_{29} \times 3).2.\mathcal{O} \\
6 & A_{49} & 7^2{:}Q_8.S_3 & A_{47} \\
7 & S_{49} & 7^2{:}Q_8.S_3 & S_{47}, A_{47} \times 2 \\
8 & A_{121}.\mathcal{O} & 11^2{:}{\rm SL}_{2}(3).5.\mathcal{O} & A_{119} \\
9 & S_{121} & 11^2{:}{\rm SL}_{2}(3).5 & S_{119}, A_{119} \times 2 \\
10 & A_{529} & 23^2{:}{\rm SL}_{2}(3).22 & A_{527} \\
11 & S_{529} & 23^2{:}{\rm SL}_{2}(3).22 & S_{527}, A_{527} \times 2 \\
12 & {\rm PSL}_{2}(11).\mathcal{O} & 11{:}\mathcal{O} & A_5 \\
13 & {\rm PSL}_{2}(11).\mathcal{O} & 11{:}(5 \times \mathcal{O}_1) & A_4.\mathcal{O}_2 \\
14 & {\rm PSL}_{2}(23).\mathcal{O} & 23{:}(11 \times \mathcal{O}) & S_4 \\
15 & {\rm PSL}_{2}(29) & 29{:}7 & A_5 \\
16 & {\rm PSL}_{2}(59).\mathcal{O} & 59{:}(29 \times \mathcal{O}) & A_5 \\
17 & {\rm PSL}_{3}(3).\mathcal{O} & 13{:}(3 \times \mathcal{O}) & {\rm A\Gamma L}_{1}(9) \\
18 & {\rm P\Gamma L}_{3}(4).\mathcal{O} & 7{:}(3 \times \mathcal{O}).S_3 & 2^4{:}(3 \times D_{10}).2 \\
19 & {\rm PSL}_{3}(8).(3 \times \mathcal{O}) & 73{:}(9 \times \mathcal{O}_1) & 2^{3+6}{:}7^2{:}(3 \times \mathcal{O}_2) \\
20 & {\rm PGL}_{4}(3) & (3^3{:}13{:}3).2 & ((4 \times {\rm PSL}_{2}(9)){:}2).2 \\
21 & {\rm PSL}_{4}(3).2^2 & (3^3{:}13{:}3).2 & ((4 \times {\rm PSL}_{2}(9)){:}2).2^2 \\
22 & {\rm P\Gamma L}_{4}(4).\mathcal{O} & (2^6{:}63{:}3).2 & ((5 \times {\rm PSL}_{2}(16)){:}2).2.\mathcal{O} \\
23 & {\rm PSL}_{5}(2).\mathcal{O} & 31{:}(5 \times \mathcal{O}) & 2^6{:}(S_3 \times {\rm PSL}_{3}(2)) \\
24 & {\rm PSU}_{3}(8).3^2.\mathcal{O} & 57{:}9.\mathcal{O}_1 & 2^{3+6}{:}(63{:}3).\mathcal{O}_2 \\
25 & {\rm PSU}_{4}(3).2 & 3^4{:}2,3^3.6, 3^3.D_6, 3_{\pm}^{1+2}.6, 3^{1+2}_{\pm}.D_6,  & {\rm PSL}_{3}(4).2 \\
26 & {\rm PGU}_{4}(3) & 3^4{:}4 & {\rm PSL}_{3}(4).2 \\
27 & {\rm PSU}_{4}(3).2^2 & 3^4{:}2^2,3^3.D_{12}, 3^3.(6 \times 2), 3^2.3^2.2^2 & {\rm PSL}_{3}(4).2 \\
28 & {\rm PSU}_{4}(3).2^2 & 3^4{:}2,3^3.6, 3^3.D_6, 3_{\pm}^{1+2}.6, 3_{\pm}^{1+2}.D_6 & {\rm PSL}_{3}(4).2^2 \\
29 & {\rm PSU}_{4}(3).D_8 &  3^4{:}2^2,  3^4{:}4, 3^3.D_{12}, 3^3.(6 \times 2), 3^2.3^2.2^2 & {\rm PSL}_{3}(4).2^2 \\
30 & {\rm PSU}_{4}(8).3.\mathcal{O} & (513{:}3).3.\mathcal{O}_1 & (2^{12}.{\rm SL}_{2}(64).7).3.\mathcal{O}_2 \\
31 & {\rm PSp}_{4}(3).\mathcal{O} & 3_{\pm}^{1+2}{:}\mathcal{O}_1 & 2^4{:}(A_5.\mathcal{O}_2) \\
32 & {\rm PSp}_{4}(3).\mathcal{O} & 3_{+}^{1+2}{:}(Q_8.\mathcal{O}_1) & \mathcal{O}_2.S_5 \\
33 & {\rm PGSp}_{4}(3) & 2^4{:}5{:}4 & 3_{+}^{1+2}{:}S_3, 3^3{:}S_3 \\
34 & {\rm PGSp}_{6}(3) & (3^{1+4}_{+}{:}2^{1+4}.D_{10}).2 & {\rm PSL}_{2}(27).6 \\
35 & \O_{8}^{+}(2).\mathcal{O} & 2^6{:}15, 2^4{:}15.4 & A_9.\mathcal{O} \\ 
36 & {\rm M}_{11} & C_{11} & {\rm M}_{10} \\
37 & {\rm M}_{11} & 11{:}5 & (3^2{:}Q_8).2 \\
38 & {\rm M}_{12}.\mathcal{O} & (3^2{:}Q_8).2 & {\rm PSL}_{2}(11).\mathcal{O} \\
39 & {\rm M}_{12} & C_6 \times C_2, A_4,  D_{12} & {\rm M}_{11} \\
40 & {\rm M}_{12}.2 & S_4,  D_{24},  D_8 \times 3, 3{:}D_8 & {\rm M}_{11} \\
41 & {\rm M}_{22}.2 & D_{22} & {\rm PSL}_{3}(4).2 \\
42 & {\rm M}_{23} & C_{23} & {\rm M}_{22} \\
43 & {\rm M}_{23} & 23{:}11 & {\rm PSL}_{3}(4).2, 2^4{:}A_7 \\
44 & {\rm M}_{24} & S_4, D_{24}, D_8 \times 3, 3{:}D_8, A_4 \times 2 & {\rm M}_{23} \\
45 & {\rm J}_{2}.2 & 5^2{:}4 & G_2(2) \\
46 & {\rm HS}.2 & 5^2{:}4 & {\rm M}_{22}.2 \\
47 & {\rm He}.2 & 7_{+}^{1+2}{:}6 & {\rm Sp}_{4}(4).4 \\ \hline
\end{array}
\]
\caption{The exact factorizations $G=HK$ in Theorem \ref{t:main3}(v)}
\label{tab-sharp}
\end{table}

\begin{remk}\label{r:4}
Let us comment on the statement of Theorem \ref{t:main3}.
\begin{itemize}\addtolength{\itemsep}{0.2\baselineskip}
\item[{\rm (a)}] As recorded in Table \ref{t:exact}, there exist infinite families of exact factorizations satisfying the conditions in cases (i)--(iv). Note that in the final row of Table \ref{t:exact}, we have $H = W{:}C<P_m$, where $W$ is elementary abelian of order $q^m$ and $C$ is a Singer cycle in ${\rm GL}_{m}(q)$ that acts transitively on the nontrivial elements of $W$. In case (iv), we are not aware of an example with $m$ even.

\begin{table}
\[
\begin{array}{ccccc} \hline
{\rm Case} & G & H & K & \mbox{Conditions} \\ \hline
{\rm (i)} & S_n & C_n & S_{n-1} & \\
{\rm (ii)} & S_{p^a} & {\rm AGL}_{1}(p^a) & S_{p^a-2} & \\
{\rm (iii)} & {\rm PGL}_{n}(q) & \frac{q^n-1}{q-1} & P_1 & \\
{\rm (iv)} & {\rm Sp}_{2m}(q) & q^m{:}(q^m-1) & \O_{2m}^{-}(q) & \mbox{$m \geqs 3$ odd} \\ \hline
\end{array}
\]
\caption{Some exact factorizations in parts (i)--(iv) of Theorem \ref{t:main3}}
\label{t:exact}
\end{table}

\item[{\rm (b)}] In Table~\ref{tab-sharp}, we have omitted any factorizations that are  isomorphic to one of the examples in parts (i)--(iv). For instance, $G=S_6$ has an exact factorization $G = HK$ with $H \in \{C_6,D_6\}$ and $K = {\rm PGL}_{2}(5) \cong S_5$, which is included in part (i). Similarly, $G=A_8 \cong {\rm PSL}_{4}(2)$ has a factorization with $H = C_{15}$ and $K = {\rm AGL}_{3}(2)$, which is covered in part (iii).

\item[{\rm (c)}] Notice that there are several cases in Table~\ref{tab-sharp} where both $H$ and $K$ are solvable. Here we record the triple $(G,H,K)$ in the table, but we omit $(G,K,H)$.

\item[{\rm (d)}] Consider Case 19 in Table \ref{tab-sharp}, with $G = {\rm PSL}_{3}(8).(3 \times \mathcal{O})$, $H = 73{:}(9 \times \mathcal{O})$ and $K = 2^{3+6}{:}7^2{:}3$. Then $G = HK$ is an exact factorization and $H$ is a maximal subgroup of $G$, so we can view $G$ as an almost simple primitive group with point stabilizer $H$ and regular subgroup $K$. It is worth noting that this case has been omitted in the statement of \cite[Theorem 1.1]{LPS-4}, which describes all the almost simple primitive groups with a regular subgroup.

\item[{\rm (e)}] In Case 25 of Table \ref{tab-sharp}, $G = {\rm PSU}_{4}(3).2$ is the unique index-two subgroup of ${\rm PGU}_{4}(3)$. In Case 27, the group $G = {\rm PSU}_{4}(3).2^2$ contains an involutory graph automorphism with centralizer of type ${\rm Sp}_{4}(3)$, which distinguishes it from the group in Case 28 of the same shape.
\end{itemize}
\end{remk}

In a sequel, we will use the main results in this paper to characterize the quasiprimitive permutation groups which contain a transitive solvable subgroup, extending classical results of Burnside and Schur on primitive groups with a transitive cyclic subgroup. As a first step in this direction, we present Theorem \ref{nil-B-gps} below, which describes the finite primitive permutation groups with a regular nilpotent subgroup. This extends earlier work of Li \cite{Li-Abel-gps} on primitive groups containing an abelian regular subgroup. In order to state this result, we need to introduce some new terminology.

Let $H$ be a finite group. By considering the regular action, we may view $H$ as a regular subgroup of a $2$-transitive permutation group (namely, $S_n$ acting on $\{1, \ldots, n\}$, where $n$ is the order of $H$). Following Wielandt, we say that $H$ is a \emph{Burnside-group} (in short, \emph{B-group}) if \emph{every} primitive permutation group containing a regular subgroup isomorphic to $H$ is $2$-transitive (see \cite[Definition\,25.1]{Wielandt-book}). The term `B-group' is due to the fact that Burnside \cite{burnside} first proved that every cyclic group of composite prime-power order is a B-group (also see \cite{Wildon}).
For more than a century, the problem of classifying B-groups and characterizing primitive groups with  a regular subgroup has attracted considerable attention; we refer the reader to \cite{Wielandt-book} for the early work and \cite{Bau1,Li-Abel-gps,LPS-3,LPS-4} for more recent results.

For some special classes of groups, the above problems have been solved. For example, Schur \cite{schur} proved that every cyclic group of composite order is a B-group, and the
primitive groups containing a cyclic regular subgroup are classified in \cite{Jones}. More generally, the primitive groups with an abelian regular subgroup are determined in \cite{Li-Abel-gps}. Note that a group of the form
\[
H=H_1\times H_2\times\dots\times H_d
\]
with $|H_1|=\cdots=|H_d|=m\geqslant3$ and $d\geqslant 2$ is not a B-group, since
$H$ is a regular subgroup of the simply primitive permutation group $S_m\wr S_d$ of degree $m^d$.

Let $G \leqs {\rm Sym}(\Delta)$ be a primitive group with socle $T$ and let $d$ be a positive integer. Consider the primitive product action of $G \wr S_d$ on $\Delta^d$. Then we refer to any primitive subgroup of $G \wr S_d$ with socle $T^d$ as a \emph{blow-up} of $G$ on $\Delta$ (note that we allow $d=1$). This leads us naturally to the following definition.

\begin{def-non}\label{generalized B-group}
A finite group $H$ is called a \emph{generalized B-group} if every primitive permutation
group containing a regular subgroup isomorphic to $H$ is a blow-up of a $2$-transitive group.
\end{def-non}

An elementary abelian $p$-group $C_p^d$ is a regular subgroup of a primitive affine group, which may not be $2$-transitive, and thus $C_p^d$ is not a B-group, nor a generalized B-group, in general. More precisely, one can show that $C_p^d$ is a B-group if and only if $p=2$, $2^d-1$ is a Mersenne prime and the only simple irreducible subgroups of ${\rm GL}_{d}(2)$ are $C_{2^d-1}$ and ${\rm GL}_{d}(2)$ itself (see \cite[Proposition 7.5]{Wildon}). For example, $C_2^3$ is a B-group.

We will also need the following definition.

\begin{def-non}\label{class-mate}
Two groups are \emph{class-mates} if they are isomorphic to regular subgroups of the same primitive permutation group, which is not a symmetric or alternating group in its natural action.
\end{def-non}

Note that if we allow the symmetric or alternating group in its natural action, then two groups would be class-mates if and only if they have the same order, so we exclude this situation in the definition.

By \cite{Li-Abel-gps}, every finite abelian group is either a generalized B-group or a class-mate of an elementary abelian $p$-group. The latter examples come from affine permutation groups; such a group contains a regular subgroup which is elementary abelian and normal, and it may also contain other regular subgroups. Indeed, a very nice construction of Hegedus \cite{Hegedus} shows that if $d \geqs 5$ then ${\rm AGL}_d(p)$ has a regular subgroup which contains no nontrivial translations. In particular, two class-mates may intersect trivially.

Our main result on nilpotent regular subgroups of primitive groups is the following, which extends the main theorem of \cite{Li-Abel-gps} on abelian regular subgroups. In the statement, we say that a primitive group is of \emph{product-type} if it is the blow-up of an almost simple primitive group.

\begin{theorem}\label{nil-B-gps}
Let $G$ be a finite primitive permutation group of degree $n$ with socle $T$ and let $H$ be a nilpotent regular subgroup of $G$. Then one of the following holds:
\begin{itemize}\addtolength{\itemsep}{0.2\baselineskip}
\item[{\rm (i)}] $T = A_n$ and $H$ is a nilpotent group of order $n$.
\item[{\rm (ii)}] $G \leqs {\rm AGL}_{d}(p)$ is an affine group and $H$ is a class-mate of $T = C_p^d$.
\item[{\rm (iii)}] $G$ is a product-type group, $T=S^d$, $n=m^d$ and one of the following holds:  

\vspace{1mm}

\begin{itemize}\addtolength{\itemsep}{0.2\baselineskip}
\item[{\rm(a)}] $H$ is a class-mate of $C_m^d$ and $(S,m)$ is one of 
\[
\mbox{$(A_m,m)$, 
$({\rm PSL}_{a}(q), \frac{q^a-1}{q-1})$, $({\rm PSL}_{2}(11),11)$, $({\rm M}_{11},11)$ or $({\rm M}_{23},23)$.}
\]
\item[{\rm(b)}] $S = {\rm M}_{12}$ and $H$ is a class-mate of $(C_6 \times C_2)^d$.
\item[{\rm(c)}] $S = {\rm M}_{24}$ and $H$ is a class-mate of $(D_8\times C_3)^d$.
\end{itemize}
\item[{\rm (iv)}] $G$ is a product-type group with $T = {\rm PSp}_4(3)^d$ and $H$ is a class-mate of $(3_+^{1+2})^d$. 
\end{itemize}
\end{theorem}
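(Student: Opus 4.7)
The plan is to invoke the O'Nan--Scott Theorem to stratify $G$ according to its type -- affine, almost simple, product-type, simple diagonal, or twisted wreath -- and handle each separately. Conclusions (i)--(iv) will arise from the first three types; the remaining two will admit no nilpotent regular subgroup.

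For the affine type the socle $T = C_p^d$ is itself a nilpotent regular subgroup, so by definition any other regular subgroup $H$ is a class-mate of $T$, giving conclusion (ii). For the simple diagonal and twisted wreath types, where the socle is $S^k$ with $k \geqs 2$, the plan is to adapt the abelian analysis of \cite{Li-Abel-gps}: the degree $|S|^{k-1}$ and the rigid shape of a point stabiliser force any regular subgroup to involve a non-trivial diagonal or twisted action across the simple factors, which is incompatible with nilpotence since it prevents the Sylow subgroups from commuting.

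The almost simple case is the heart of the argument. A nilpotent regular $H$ with point stabiliser $K$ yields an exact factorisation $G=HK$ with $H$ solvable, placing us inside the scope of Theorem \ref{t:main3}. I would scan its four general conclusions as follows: (i) directly gives the present (i), since every nilpotent group of order $n$ embeds regularly in $S_n$; (ii) contributes nothing, because any nilpotent $H \leqs {\rm A\Gamma L}_{1}(p^a)$ has its $p'$-part centralising the translations, which forces it into the Galois factor and makes $2$-homogeneity impossible once $p^a \geqs 4$; (iii) yields precisely the cyclic Singer-cycle factorisation of $\PSL_{n}(q)$, the $d=1$ instance of (iii)(a) with $S=\PSL_{a}(q)$; and (iv) contributes nothing, since the Singer in $q^m{:}(q^m-1)$ with $q$ even acts faithfully on the unipotent radical and so prevents nilpotence. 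Finally, scanning Table \ref{tab-sharp} row by row and retaining only those $H$ that are the internal direct product of their Sylow subgroups picks out the remaining "$d=1$" examples: $C_{11}$ in $\PSL_{2}(11)$ and in ${\rm M}_{11}$, $C_{23}$ in ${\rm M}_{23}$, $C_6 \times C_2$ in ${\rm M}_{12}$, $D_8 \times C_3$ in ${\rm M}_{24}$, and $3_{+}^{1+2}$ in $\PSp_{4}(3)$, matching (iii)(a)--(c) and (iv).

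For the product-type case, write $G \leqs L \wr P$ acting on $\Delta^d$, where $L$ is primitive almost simple of degree $m$ on $\Delta$ and $P \leqs S_d$ is transitive, so $|H|=m^d$. The reduction step is to show that the projection of $H \cap L^d$ into any one coordinate is a nilpotent regular subgroup $H_0$ of $L$ on $\Delta$, placing $(L,H_0)$ among those just catalogued; then $H$ is automatically a class-mate of $H_0^d$, as both are regular subgroups of $G$. The main obstacle will be precisely this reduction. Since a nilpotent group is the internal direct product of its Sylow subgroups, the analysis can proceed prime-by-prime, and a Sylow $p$-subgroup of $L \wr P$ with $p \nmid |P|$ lies entirely in the base $L^d$, which cleanly separates the coordinate contributions. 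The delicate remaining point is that for $L \in \{{\rm M}_{12}, {\rm M}_{24}, \PSp_{4}(3)\}$ several non-isomorphic solvable regular subgroups of the same order coexist, so one must verify that only the specific nilpotent $H_0$ named in the statement can arise as a coordinate projection, and hence that $H$ is a class-mate of $H_0^d$ rather than of some other nilpotent group of the same order.
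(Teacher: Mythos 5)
There is a genuine gap, and it sits exactly where you flag "the main obstacle": your product-type reduction. You claim that the projection of $H\cap L^d$ into a coordinate is a nilpotent \emph{regular} subgroup of $L$, and you then propose to feed these projections back into the catalogue of exact factorizations from Theorem \ref{t:main3}. But the coordinate projection of a regular subgroup of a product-type group is in general only \emph{transitive} on $\Delta$ (this is \cite[Proposition 2.5]{LPS-3}, which is what the paper invokes), not regular, and your Sylow-separation remark does not repair this: the part of $H$ lying in the base group can project onto a proper overgroup of a regular subgroup. Consequently the correct input for the product case is a classification of nilpotent \emph{transitive} subgroups of almost simple primitive groups (the paper's Proposition \ref{nil--abel}), which is strictly larger than the list of exact solvable factorizations; it includes, for instance, $3_{-}^{1+2}<{\rm P\Gamma L}_{2}(8)$ in degree $9$, $D_{2(q+1)}<{\rm PGL}_{2}(q)$ in degree $q+1$ for $q$ Mersenne, and $3.3_{+}^{1+2}$ in degree $27$ with socle ${\rm PSp}_{4}(3)$, none of which is regular and none of which appears in your scan. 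Relatedly, your almost simple enumeration is already incomplete: for $q$ a Mersenne prime, ${\rm PSL}_{2}(q)=D_{q+1}P_1$ is an exact factorization with $D_{q+1}$ a nilpotent ($2$-group) regular factor, so Theorem \ref{t:main3}(iii) does not yield "precisely" the Singer-cycle case. This particular omission happens to be harmless for the final statement only because $D_{q+1}$ is a class-mate of $C_{q+1}$, but the transitive non-regular cases above genuinely have to be classified and shown to lead back to conclusions (iii) and (iv); the paper's Lemmas \ref{metac-nil} and \ref{metac-nil'} exist precisely for this purpose.

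A second, smaller gap is the diagonal case. "Prevents the Sylow subgroups from commuting" is not an argument. The paper's route is: by \cite[Theorem 1]{LPS-3}, a simple diagonal group with a nilpotent regular subgroup forces $T=S^2$ and a factorization $S=AB$ with $A$ and $B$ proper nilpotent subgroups, which contradicts the Wielandt--Kegel theorem (a product of two nilpotent groups is solvable); compound diagonal groups are then blow-ups of simple diagonal ones, and the twisted wreath type is already excluded by \cite[Theorem 1]{LPS-3} since a nilpotent regular subgroup contains no nontrivial normal subgroup of the nonabelian socle. You need this ingredient (or an equivalent); the abelian analysis of \cite{Li-Abel-gps} does not transfer automatically to the nilpotent setting.
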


\begin{remk}\label{r:5}
Consider the primitive group $G = {\rm PSp}_4(3)\wr S_d$ in its product action of degree $27^d$. Here $(3_+^{1+2})^d$ is a regular subgroup of $G$, and so is $(3_{-}^{1+2})^d$. As explained in Example~\ref{3^2:3}, if $e,f$ are non-negative integers and $d=e+3f$ then $(3_+^{1+2})^e\times(3^6{:}3_+^{1+2})^f$ is another regular subgroup. Since the action of ${\rm PSp}_{4}(3)$ on the cosets of $2^4{:}A_5$ is simply primitive, it follows that $H$ is \emph{not} a generalized B-group in part (iv).
\end{remk}

This leads to the following theorem, which shows that `most' nilpotent groups are generalized B-groups and class-mates of homocyclic groups.

\begin{theorem}\label{t:5}
Let $H$ be a finite nilpotent group. Then one of the following holds:
\begin{itemize}\addtolength{\itemsep}{0.2\baselineskip}
\item[{\rm(i)}] $H$ is a generalized B-group.
\item[{\rm(ii)}] $H$ is a class-mate of an elementary abelian $p$-group.
\item[{\rm(iii)}] $H$ is a class-mate of $(3_{+}^{1+2})^d$ for some $d \geqs 1$.
\end{itemize}
\end{theorem}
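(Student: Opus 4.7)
The plan is to deduce Theorem~\ref{t:5} directly from Theorem~\ref{nil-B-gps}, by classifying which of its four cases can furnish a primitive overgroup of $H$ that is \emph{not} a blow-up of a $2$-transitive group. First I would assume $H$ is nilpotent and is not a generalized B-group. By the definition of generalized B-group, there exists a primitive permutation group $G \leqs {\rm Sym}(\O)$ containing $H$ as a regular subgroup such that $G$ is not a blow-up of a $2$-transitive group. Applying Theorem~\ref{nil-B-gps} to this $G$, one of the cases (i)--(iv) of that theorem must hold.

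Next I would rule out cases (i) and (iii) of Theorem~\ref{nil-B-gps}. In case (i), $G \in \{A_n, S_n\}$ acts naturally on $n$ points, which is $2$-transitive, so $G$ is itself a blow-up (with $d=1$) of a $2$-transitive group, contradicting the choice of $G$. In case (iii), $G$ is of product-type, so $G = S_0 \wr P$ acting in product action on $\Delta^d$, where $S_0$ is an almost simple primitive group with socle $S$ acting on $\Delta$. To exclude this case it suffices to observe that each listed base action of $S$ on $\Delta$ is $2$-transitive: the natural action of $A_m$ on $m$ points, the projective action of ${\rm PSL}_a(q)$ on $\frac{q^a-1}{q-1}$ points, the exceptional action of ${\rm PSL}_2(11)$ on $11$ points, the actions of ${\rm M}_{11}$ on $11$ points, ${\rm M}_{12}$ on $12$ points, ${\rm M}_{23}$ on $23$ points and ${\rm M}_{24}$ on $24$ points are all $2$-transitive (in fact most are multiply transitive). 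Hence $G$ is a blow-up of a $2$-transitive group, a contradiction.

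This leaves cases (ii) and (iv) of Theorem~\ref{nil-B-gps}. In case (ii), $G$ is an affine primitive group with socle $T = C_p^d$ and $H$ is a class-mate of $T$; this yields conclusion (ii) of Theorem~\ref{t:5}. (Note that when the affine $G$ happens to be $2$-transitive it does not witness the failure of $H$ to be a generalized B-group, but if $H$ is realised as a class-mate of $C_p^d$ at all, conclusion~(ii) still holds.) In case (iv), $G$ has product-type socle $({\rm PSp}_4(3))^d$, the base action of ${\rm PSp}_4(3)$ on $27$ points (cosets of $2^4{:}A_5$) is simply primitive (as noted in Remark~\ref{r:5}), and $H$ is a class-mate of $(3_+^{1+2})^d$, giving conclusion (iii) of Theorem~\ref{t:5}. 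This exhausts the possibilities and completes the argument.

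The main point requiring care is simply the verification that every base action listed in Theorem~\ref{nil-B-gps}(iii) is $2$-transitive, but this is routine and follows from the classical list of finite $2$-transitive groups; all substantive work is already encapsulated in Theorem~\ref{nil-B-gps}, so Theorem~\ref{t:5} is essentially a bookkeeping corollary.
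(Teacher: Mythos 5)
Your argument is correct and follows essentially the same route as the paper: the paper's proof of Theorem \ref{t:5} likewise reduces everything to the trichotomy (affine, blow-up of a $2$-transitive group, or product-type over ${\rm PSp}_4(3)$ on $27$ points) established at the start of the proof of Theorem \ref{nil-B-gps}, and reads off conclusions (i)--(iii) accordingly. Your only addition is the explicit check that each base action in Theorem \ref{nil-B-gps}(iii) is $2$-transitive, which the paper absorbs into that trichotomy; this is a presentational difference, not a mathematical one.
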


Our notation is all fairly standard. Let $A$ and $B$ be finite groups and let $n$ and $p$ be positive integers, with $p$ a prime. We denote a cyclic group of order $n$ by $C_n$ (or just $n$) and $A^n$ is the direct product of $n$ copies of $A$. In particular, $C_p^n=p^n$ is an elementary abelian $p$-group of order $p^n$. If $p$ is an odd prime, then $p^{1+2m}_{+}$ denotes an extraspecial group of order $p^{1+2m}$ and exponent $p$, while $p^{1+2m}_{-}$ has exponent $p^2$. A split extension of $A$ by $B$ will be denoted by writing $A{:}B$, whereas $A.B$ is an unspecified extension. We write ${\rm soc}(A)$ for the socle of $A$ (that is, the subgroup of $A$ generated by its minimal normal subgroups). In addition, we adopt the notation from \cite{KL} for finite simple groups.

\vs

Finally, let us say a few words on the organization of the paper. The next section, which comprises the main bulk of the paper, is dedicated to the proof of Theorem \ref{t:main}, with separate subsections to handle the cases where $G$ is a linear, symplectic, unitary and orthogonal group. A proof of Theorem \ref{t:main2} is presented in Section \ref{s:main2}, which includes a detailed analysis of the Type II and III factorizations of almost simple classical groups. Finally, the proof of Theorem \ref{t:main3} is given in 
Section \ref{s:main3}, and Theorems \ref{nil-B-gps} and \ref{t:5} are established in Section \ref{s:main4}.

\vs

\noindent \textbf{Acknowledgements.} This work was partially supported by NSFC grants no. 11231008 and no. 11771200. Burness thanks the Department of Mathematics at the Southern University of Science and Technology (SUSTech) for their generous hospitality during a research visit in April 2019. He also thanks Bob Guralnick for helpful discussions.

\section{Proof of Theorem \ref{t:main}}\label{s:main1}

In this section we present a proof of Theorem \ref{t:main}. We will partition the proof into four subsections, according to the socle of the almost simple classical groups we are considering.

Let $q$ be a prime power and let $m \geqs 2$ be an integer. Recall that a prime $r$ is a \emph{primitive prime divisor} of $q^m-1$ if $r$ divides $q^m-1$, but it does not divide $q^i-1$ for $1 \leqs i < m$. By a theorem of Zsigmondy \cite{Zsig}, such a prime always exists, unless $(m,q) = (6,2)$, or $m=2$ and $q$ is a Mersenne prime.

\subsection{Linear groups}\label{ss:lin}

We begin the proof of Theorem \ref{t:main} by handling Case 1 in Table \ref{tab}. Recall that a \emph{Singer cycle} in ${\rm PGL}_{n}(q)$ is a cyclic subgroup of order $\frac{q^n-1}{q-1}$ (equivalently, it is an irreducible cyclic subgroup of maximal order, with respect to the natural module).

\begin{prop}\label{p:case1}
Let $G$ be an almost simple group with socle $G_0 = {\rm PSL}_{n}(q)$ and subgroups $H$ and $K$ satisfying the conditions in Case $1$ of Table $\ref{tab}$.
\begin{itemize}\addtolength{\itemsep}{0.2\baselineskip}
\item[{\rm (i)}] If $G=HK$, then $|H|$ is divisible by $\frac{q^n-1}{q-1}$.
\item[{\rm (ii)}] If $G = {\rm PGL}_{n}(q)$ and $K=P_1$, then $G=HK$ with $H = \frac{q^n-1}{q-1}$ a Singer cycle.
\end{itemize}
\end{prop}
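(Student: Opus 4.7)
My plan is to handle the two parts of Proposition~\ref{p:case1} separately, with (ii) providing an explicit factorization that realises the minimal order $\ell = \frac{q^n-1}{q-1}$ and (i) establishing the corresponding divisibility lower bound. Both parts rest on the classical theory of Singer cycles in ${\rm PGL}_n(q)$ together with the maximality of the parabolic subgroup $P_1 \cap G_0$ in $G_0 = {\rm PSL}_n(q)$.

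For (ii), I would take $H$ to be the image in $G = {\rm PGL}_n(q)$ of a Singer cycle of ${\rm GL}_n(q)$: identifying the natural module with $\mathbb{F}_{q^n}$, this is multiplication by a generator of $\mathbb{F}_{q^n}^*$, a cyclic subgroup of order $q^n - 1$ acting regularly on nonzero vectors. After quotienting by the scalars (of order $q-1$), $H$ is cyclic of order $\frac{q^n-1}{q-1}$ and acts regularly on the $\frac{q^n-1}{q-1}$ projective points, that is, on $G/K$ with $K = P_1$. Hence $G = HK$ with $H \cap K = 1$. Moreover, $H \cap G_0$ is the Singer cycle of $G_0$, of order $\frac{q^n-1}{(q-1)d}$, which lies in the Singer normaliser $A = \frac{q^n-1}{(q-1)d}{:}n$ from Table~\ref{tab}, so this genuinely is a Type~I.1 factorization.

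For (i), the key step will be to show that any $K \leqs G$ satisfying $K \cap G_0 \trianglerighteqslant B$ already satisfies $K \cap G_0 \leqs P_1 \cap G_0$. Indeed $K \cap G_0 \leqs N_{G_0}(B)$, and $B$ is normal in the parabolic $P_1 \cap G_0$, which is a maximal subgroup of $G_0$. Since $B$ is a proper nontrivial subgroup of the simple group $G_0$ it cannot be normal in $G_0$, so $N_{G_0}(B) \ne G_0$, and maximality of $P_1 \cap G_0$ forces $N_{G_0}(B) = P_1 \cap G_0$. Given this containment, $|G_0 : K \cap G_0|$ is divisible by $|G_0 : P_1 \cap G_0| = \frac{q^n - 1}{q-1}$, and the identity $|G:K| = |G:G_0 K|\cdot |G_0 K:K| = |G:G_0 K|\cdot |G_0 : K \cap G_0|$ (the last equality by the second isomorphism theorem) transfers this divisibility to $|G:K|$. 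Finally $G = HK$ yields $|H : H \cap K| = |G:K|$, which in turn divides $|H|$, completing the argument.

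The whole proof is short and uniform in $n \geqs 2$ and $q$; there is no genuine obstacle beyond the normaliser computation $N_{G_0}(B) = P_1 \cap G_0$, which here is immediate from maximality of $P_1 \cap G_0$ and simplicity of $G_0$. As foreshadowed in Remark~\ref{r:2}, the corresponding task in the remaining Cases 2--9 of Table~\ref{tab} — determining how much of the unipotent radical a solvable factor must contain — will be considerably more delicate, so this case serves as a template for the pattern of argument to be developed later.
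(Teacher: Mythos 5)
Your proof is correct and takes essentially the same approach as the paper: part (i) reduces to the containment $K\cap G_0\leqs P_1\cap G_0$ (which the paper simply asserts and you justify via $N_{G_0}(B)=P_1\cap G_0$), and part (ii) is exactly the paper's observation that a Singer cycle of ${\rm PGL}_{n}(q)$ acts transitively — indeed regularly — on the projective points.
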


\begin{proof}
Here $K \cap G_0 \leqs P<G_0$, where $P$ is the stabilizer in $G_0$ of a $1$-dimensional (or $(n-1)$-dimensional) subspace of the natural module. In particular, $G=HK$ only if $|G_0:P|= \frac{q^n-1}{q-1}$ divides $|H|$. This establishes part (i). The existence of the factorization in part (ii) follows immediately from the fact that a Singer cycle in $G = {\rm PGL}_{n}(q)$ acts transitively on the set of $1$-dimensional subspaces of the natural module for $G$.
\end{proof}

\begin{prop}\label{p:case2}
Let $G$ be an almost simple group with socle $G_0 = {\rm PSL}_{4}(q)$ and subgroups $H$ and $K$ satisfying the conditions in Case $2$ of Table $\ref{tab}$.
\begin{itemize}\addtolength{\itemsep}{0.2\baselineskip}
\item[{\rm (i)}] If $G=HK$, then $|H|$ is divisible by $\frac{q^3(q^3-1)}{(2,q-1)}$.
\item[{\rm (ii)}] Let $G \leqs {\rm PGL}_{4}(q)$ and $K = {\rm PGSp}_{4}(q)<G$, where
\[
G = \left\{\begin{array}{ll}
{\rm PSL}_{4}(q).2 & \mbox{if $q \equiv 1 \imod{4}$} \\
{\rm PSL}_{4}(q) & \mbox{otherwise.}
\end{array}\right.
\]
Then $G=HK$ with $H = q^3{:}\frac{q^3-1}{(2,q-1)} < P_k$ and $k \in \{1,3\}$.
\end{itemize}
\end{prop}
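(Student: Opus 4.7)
The plan is to prove (i) by a structural analysis of $\bar H = H \cap G_0$ inside the overgroup $A$, and (ii) by an explicit construction. Throughout, set $\bar H = H \cap G_0$ and $\bar K = K \cap G_0$; since $|\bar H| \mid |H|$, for (i) it suffices to prove $\frac{q^3(q^3-1)}{(2,q-1)}$ divides $|\bar H|$. The graph-duality automorphism of $G_0 = {\rm PSL}_4(q)$ interchanges $P_1$ and $P_3$, so I may assume $k = 1$. Let $U$ denote the unipotent radical of $P_1$, so $U \trianglelefteq A$ with $|U| = q^3$ and $A/U \cong C_{(q^3-1)/d}{:}C_3$ where $d = (4, q-1)$. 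Since ${\rm PSp}_4(q)$ is maximal and self-normalizing in $G_0$, the hypothesis $\bar K \trianglerighteqslant {\rm PSp}_4(q)$ forces $\bar K = {\rm PSp}_4(q)$. The product formula then yields $|G_0 : \bar K| = \frac{2q^2(q^3-1)}{d}$ divides $|\bar H|$; in particular, $\frac{q^3-1}{(2,q-1)}$ divides the $p'$-part of $|\bar H|$, where $p$ is the defining characteristic.

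The central step, and the main obstacle, is to show $U \leqslant \bar H$, boosting $|\bar H|_p$ from at least $q^2$ to exactly $q^3$. My plan uses two ingredients. First, the bound $|\bar H/(\bar H \cap U)| \leqslant |A/U| = 3(q^3-1)/d$, combined with $|\bar H| \geqslant |G_0:\bar K|$, forces $|\bar H \cap U| \geqslant \frac{2q^2}{3}$; since $\bar H \cap U$ is a subgroup of the $p$-group $U$, its order is actually a power of $p$ of size at least $q^2$. Second, the cyclic subgroup $C_{(q^3-1)/d} \leqslant A/U$ is a Singer cycle acting irreducibly on the natural $\mathbb{F}_q^3$-module $U$. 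The order bound above forces the image of $\bar H$ in $A/U$ to contain an element of order divisible by a primitive prime divisor of $q^3-1$ (which exists by Zsigmondy, since $m=3$ is never an exceptional index), and any such element acts irreducibly on $U$. Hence the $\bar H$-invariant subgroup $\bar H \cap U$ must be $\{0\}$ or $U$, and $|\bar H \cap U| \geqslant q^2$ rules out the former. Combining $U \leqslant \bar H$ with the $p'$-part divisibility yields $\frac{q^3(q^3-1)}{(2,q-1)} \mid |\bar H|$, completing (i).

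For (ii), the plan is to construct $H_1 = U \rtimes T$ where $T$ is a cyclic subgroup of order $\frac{q^3-1}{(2,q-1)}$ sitting inside the Singer cycle of the Levi of $P_1$. When $q \equiv 1 \pmod{4}$, the natural Singer cycle inside $G_0$ has order only $(q^3-1)/4$, so to accommodate $T$ of order $(q^3-1)/2$ I must pass to the index-$2$ overgroup furnished by the diagonal outer automorphism, which is exactly why $G_1 = {\rm PSL}_4(q).2$ is needed in that case; otherwise $G_1 = G_0$ suffices. To verify $G_1 = H_1 K_1$ with $K_1 = {\rm PGSp}_4(q)$, I reduce to the arithmetic identity $|G_1| = |H_1||K_1|/|H_1 \cap K_1|$. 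A direct computation shows $|H_1 \cap K_1| = q$ in all parities — this intersection arises from $U \cap {\rm PSp}_4(q)$ for $q$ even, and from an analogous contribution involving the ${\rm PGSp}_4(q)$-extension when $q$ is odd — and the identity then follows by a short numerical check.
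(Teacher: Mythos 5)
Your strategy coincides with the paper's on both parts: for (i), combine the divisibility of $|H|$ coming from $G=HK$ with the irreducible action on the unipotent radical $U$ of an element of primitive-prime-divisor order to force $U\leqslant H$; for (ii), build $H_1=U{:}T$ with $T$ cyclic of order $\frac{q^3-1}{(2,q-1)}$ (passing to ${\rm PSL}_4(q).2$ when $q\equiv 1 \imod{4}$, for the reason you give) and verify $|H_1\cap K_1|=q$. The core idea of (i) is sound, but two concrete problems remain. First, the claim that ${\rm PSp}_4(q)$ is maximal and self-normalizing in ${\rm PSL}_4(q)$, hence $\bar K={\rm PSp}_4(q)$, is false: for $q\equiv 3\imod{4}$ the group ${\rm PGSp}_4(q)$, which contains ${\rm PSp}_4(q)$ with index $2$, lies inside ${\rm PSL}_4(q)$ --- indeed your own part (ii) takes $K={\rm PGSp}_4(q)<{\rm PSL}_4(q)$ in exactly that case, so the two halves of your argument contradict each other. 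The correct statement is only that $\frac{1}{(2,q-1)}q^2(q^3-1)$ divides $|G:K|$ (your index $\frac{2q^2(q^3-1)}{d}$ is also wrong by a factor of $2$ for $q$ even). This weaker bound still gives what you need, namely a primitive prime divisor $r$ of $q^3-1$ dividing $|H|$ and $H\cap U\neq 1$, but the numerology as written is incorrect. Relatedly, ``the product formula yields $|G_0:\bar K|$ divides $|\bar H|$'' presupposes $G_0=\bar H\bar K$, which does not follow formally from $G=HK$; the paper avoids this by working directly with $|G:K|\mid |H|$ and then using that $q^3$ and $\frac{q^3-1}{(2,q-1)}$ are coprime divisors of $|H|$.

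The more serious gap is in part (ii): the entire content of the claim is the equality $|H_1\cap K_1|=q$, and in particular the containment $H_1\cap K_1\leqslant U$, which you assert as ``a direct computation'' without performing or outlining it. Computing $U\cap K_1$ is indeed direct (one gets the symplectic transvections with a fixed centre, a group of order $q$), but excluding elements of $H_1\setminus U$ from $K_1$ is where the real work lies. The paper does this by noting that the cyclic complement of order $\frac{q^3-1}{2}$ acts semiregularly on the nonzero vectors of $U$ (viewed as the natural module for the Levi factor) and then importing the analysis of \cite[Proposition 5.9]{LX} through the isomorphism ${\rm PSL}_4(q)\cong{\rm P\O}_{6}^{+}(q)$. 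Without an argument of this kind, your reduction to ``a short numerical check'' is circular: the numerical identity $|G_1|=|H_1||K_1|/q$ holds precisely because $|H_1\cap K_1|=q$, which is the statement to be proved.
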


\begin{proof}
Let $P = U{:}L$ be the stabilizer in $G_0$ of a $1$-dimensional (or $3$-dimensional) subspace of the natural module, where $U$ is the unipotent radical and $L = \frac{1}{d}{\rm GL}_{3}(q)$ is a Levi factor, with $d=(4,q-1)$. We may identify $U$ with the natural module for $L$. According to Table \ref{tab} we have
\[
H \cap G_0 \leqs q^3{:}\left(\frac{q^3-1}{d}{:}3\right)<P
\]
and we may as well assume $K$ is maximal among subgroups of $G$ with $K \cap G_0 \trianglerighteqslant {\rm PSp}_{4}(q)$. Set $e=(2,q-1)$ and note that $\frac{1}{e}q^2(q^3-1)$ divides $|G:K|$.

Suppose $G=HK$, in which case $|H|$ is divisible by $\frac{1}{e}q^2(q^3-1)$ and $H$ contains an element $x$ of order $r$, a primitive prime divisor of $q^3-1$. Then $\la x \ra$ acts irreducibly on $U$ and it does not normalize a proper nontrivial subgroup of $U$. Since $H \cap U \ne 1$, this forces $U \leqs H$ and thus $|H|$ is divisible by $\frac{1}{e}q^3(q^3-1)$ as in part (i).

Now let us turn to part (ii). First assume $G = {\rm PSL}_{4}(q)$ with $q \equiv 3 \imod{4}$ and take $H = q^3{:}\frac{q^3-1}{2}$ and $K = {\rm PGSp}_{4}(q)$ (see \cite[Proposition 4.8.3]{KL}). By arguing as in the proof of \cite[Proposition 5.9]{LX}, noting that the cyclic subgroup of order $\frac{1}{2}(q^3-1)$ acts semiregularly on the set of nonzero vectors in the natural module for $L$, we deduce that $|H \cap K|=q$ and thus $G=HK$ (here we can appeal to \cite[Proposition 5.9]{LX} since $G \cong {\rm P\O}_{6}^{+}(q)$). For $q \equiv 1 \imod{4}$, an entirely similar argument shows that $G = {\rm PSL}_{4}(q).2< {\rm PGL}_{4}(q)$ admits a factorization $G=HK$ with $H = q^3{:}\frac{q^3-1}{2}$ and $K = {\rm PGSp}_{4}(q)$. Finally, the case $q$ even follows directly from \cite[Proposition 5.9]{LX}.
\end{proof}

\subsection{Symplectic groups}\label{ss:symp}

In this section we handle Cases 3, 4 and 5 in Table \ref{tab}.

\begin{prop}\label{p:case3}
Let $G$ be an almost simple group with socle $G_0 = {\rm Sp}_{2m}(q)$, where $m \geqs 2$ and $q$ is even. Let $H,K$ be subgroups satisfying the conditions in Case $3$ of Table $\ref{tab}$.
\begin{itemize}\addtolength{\itemsep}{0.2\baselineskip}
\item[{\rm (i)}] If $G=HK$, then $|H|$ is divisible by $q^m(q^m-1)$.
\item[{\rm (ii)}] If $G = {\rm Sp}_{2m}(q)$ and $K = {\rm O}_{2m}^{-}(q)$, then $G=HK$ with $H = q^m{:}(q^m-1)<P_m$.
\end{itemize}
\end{prop}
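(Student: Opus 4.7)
For part (i), the plan is to combine an elementary divisibility bound with a Zsigmondy-style analysis. The identity $|H||K| = |G||H\cap K|$ forces $[G:K] \mid |H|$. Since $K\cap G_0$ equals either $\O_{2m}^-(q)$ or its normal index-two extension ${\rm O}_{2m}^-(q)$ in $G_0$, the $p'$-part of $[G:K]$ is $q^m-1$ in either case (with $p=2$), giving $(q^m-1)\mid|H|$. By Zsigmondy we fix a primitive prime divisor $r$ of $q^m-1$ (treating the exception $(m,q)=(6,2)$ directly), so $H$ contains an element $x$ of order $r$. Its image in the Levi factor ${\rm GL}_m(q)$ of $P_m$ acts irreducibly on the natural module $V$, and a Frobenius-orbit computation on the eigenvalues of $x$ on the abelian unipotent radical $U$ shows that, for $m$ odd, every irreducible $\F[\la x\ra]$-summand of $U$ has $\F$-dimension $m$; any nontrivial $\la x\ra$-invariant subgroup of $U$ therefore has order divisible by $q^m$. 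A comparison of $p$-parts shows $H\cap U$ is nontrivial, so $|H\cap U|\geqs q^m$, and combined with the $(q^m-1)$-divisibility we conclude $q^m(q^m-1)\mid|H|$. The case of even $m$ requires an additional step, since there is a $C$-fixed summand in $U$ of $\F$-dimension $m/2$ (arising from the pair $(0, m/2)$ in the Frobenius-orbit count); this is handled by exploiting an element of $H$ of order $q^m-1$ and refining the module analysis.

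For part (ii), the plan is an explicit construction. Take $P_m = U{:}L$ with $L \cong {\rm GL}_m(q)$ and $U$ identified with symmetric $m\times m$ matrices over $\F$ on which $L$ acts by $A\cdot X = A X A^\top$. Fix a Singer cycle $C\leqs L$ of order $q^m-1$ and let $W\leqs U$ be an $\F[C]$-irreducible submodule of $\F$-dimension $m$ (existing by the Frobenius-orbit analysis above, as the diagonal orbit $\{(i,i)\}$ always contributes such a summand). Setting $H = W{:}C$ gives $|H|=q^m(q^m-1)$ as required.

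It remains to verify $G = HK$, equivalent to showing $|H\cap K| = |H||K|/|G| = 2$, and this intersection calculation is the main obstacle of the proof. The group $K = {\rm O}_{2m}^-(q)$ stabilizes a minus-type quadratic form $Q$ polarizing to the defining symplectic form. Since $C$ acts irreducibly on the maximal totally isotropic subspace $Y$ stabilized by $P_m$ and $|C|$ is odd, no nontrivial element of $C$ preserves $Q|_Y$ (which is a nonzero Frobenius-semilinear form on $Y$, since $Q$ has Witt index $m-1<m$ and so $Y$ is not totally singular for $Q$). Hence $H\cap K\leqs W$, and the task reduces to showing that exactly two elements of $W$ preserve $Q$. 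This can be carried out by a direct calculation with the symmetric-matrix parametrisation; alternatively, one may read off the factorization from the known exact factorization $G = q^m{:}(q^m-1)\cdot\O_{2m}^-(q)$ recorded in \cite{LPS-4}, with the extra factor of $2$ in $H\cap K$ arising naturally upon enlarging $\O_{2m}^-(q)$ to $K = {\rm O}_{2m}^-(q)$.
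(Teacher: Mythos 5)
Your construction in part (ii) is the same as the paper's: $H=W{:}C$ with $C$ a Singer cycle in the Levi factor of $P_m$ and $W$ the $m$-dimensional irreducible $\mathbb{F}_q[C]$-summand of $U$ complementing $\Lambda^2(X)$, and your reduction of $G=HK$ to $H\cap K\leqs W$ plus $|W\cap K|=2$ matches the paper's reduction to $H\cap K=X\cap K$. But the two places where you stop are exactly the places where the proof has real content, so the proposal is not yet a proof.

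In part (i) you only treat $m$ odd and defer the even case to ``an additional step \ldots refining the module analysis''. This is a genuine gap, not a detail: for $m$ even a primitive prime divisor $r$ of $q^m-1$ divides $q^{m/2}+1$, so your element $x$ \emph{centralizes} a subgroup of $U$ of order $q^{m/2}$ (coming from the eigenvalues $\mu^{q^i+q^{i+m/2}}$ inside $\Lambda^2(X)$), and the assertion that every nontrivial $\langle x\rangle$-invariant subgroup of $U$ has order at least $q^m$ fails outright. (Even for $m$ odd, a $\langle x\rangle$-invariant subgroup is only an $\mathbb{F}_2[x]$-submodule rather than an $\mathbb{F}_q$-subspace, and the order of $2$ modulo $r$ can be a proper divisor of $mf$ --- e.g.\ $q=4$, $m=3$, $r=7$ --- so one must also bring in either the $\mathbb{F}_q$-structure or the action of the much larger odd-order cyclic subgroup that $H$ is forced to contain, together with the lower bound on $|H\cap U|$ obtained by comparing $2$-parts.) You have not supplied the missing argument.

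In part (ii) the statement $|W\cap K|=2$, which you rightly call the main obstacle, is simply not established. The ``direct calculation with the symmetric-matrix parametrisation'' is not carried out, and the fallback citation does not deliver the result: the exact factorization ${\rm Sp}_{2m}(q)=(q^m{:}(q^m-1))\cdot\Omega_{2m}^{-}(q)$ is only established for $m$ odd (see Table \ref{t:exact} and Remark \ref{r:4}(a) --- no example is known for $m$ even, and indeed for $m$ even the involutions in $W$ have Aschbacher--Seitz type $c_m$ and such involutions do meet $\Omega_{2m}^{-}(q)$, so $W\cap\Omega_{2m}^{-}(q)$ need not be trivial); moreover $\Omega_{2m}^{-}(q)$ is not maximal in ${\rm Sp}_{2m}(q)$, so this is not a factorization catalogued in \cite{LPS-4}. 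The paper's treatment of this step is the substantive part of the proposition: using the $\mathbb{F}_{q^m}$-eigenbasis it shows every nontrivial element of $W$ corresponds to a rank-$m$ matrix and hence is a $b_m$- or $c_m$-involution according to the parity of $m$, verifies $z^G\cap K=z^K$, computes ${\rm fix}(z)=|C_G(z)|/|C_K(z)|=\tfrac{1}{2}q^m$ from centralizer orders, and applies the orbit-counting lemma to conclude that $H$ is transitive on $G/K$. If you want a short route, the correct one is the Guralnick--Saxl argument recorded in the remark following the proposition: $G=JK$ with $J={\rm Sp}_{2}(q^m)$ and $J\cap K={\rm O}_{2}^{-}(q^m)$, so a Borel subgroup $H$ of $J$ satisfies $|H\cap K|\leqs\gcd(2(q^m+1),q^m(q^m-1))=2$, forcing $G=HK$. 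Finally, your argument that $H\cap K\leqs W$ needs slightly more than the irreducibility of $C$ on $Y$: individual elements of $C$ of order dividing $q-1$ do stabilize hyperplanes of $Y$; the correct point is that $Q|_Y$ has the form $y\mapsto{\rm Tr}_{\mathbb{F}_{q^m}/\mathbb{F}_q}(\beta y^2)$ with $\beta\neq 0$ and squaring is injective on the odd-order group $\mathbb{F}_{q^m}^{\times}$.
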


\begin{proof}
Let $V$ be the natural module for $G_0$ and fix a standard symplectic basis
\[
\{e_1, \ldots, e_m, f_1, \ldots, f_m\}
\]
with respect to the underlying symplectic form $(\, , \,)$ on $V$. Let $P$ be the stabilizer in $G_0$ of the maximal totally isotropic subspace
$\la e_1, \ldots, e_m\ra$ and write $P = U{:}L$, where the unipotent radical $U = q^{\frac{1}{2}m(m+1)}$ is elementary abelian and $L = {\rm GL}_{m}(q)$ is a Levi factor. Then
\[
H \cap G_0 \leqs q^{\frac{1}{2}m(m+1)}{:}\left((q^m-1){:}m\right) < P
\]
and we may assume $K$ is maximal among subgroups of $G$ with $K \cap G_0 = {\rm O}_{2m}^{-}(q)$. Note that $|G:K|$ is divisible by $\frac{1}{2}q^m(q^m-1)$. The case $G = {\rm Sp}_{12}(2)$ can be handled using {\sc Magma} \cite{magma}, so we may assume $(m,q) \ne (6,2)$.

Suppose $G=HK$, so $|H|$ is divisible by $\frac{1}{2}q^m(q^m-1)$. Then $H$ contains an element of order $r$, a primitive prime divisor of $q^m-1$, and we note that the smallest nontrivial subgroup of $U$ normalized by $\la x \ra$ has order $q^m$. Therefore, $q^m$ divides $|H|$ and part (i) follows.

Now consider (ii), so $G = {\rm Sp}_{2m}(q)$, $K = {\rm O}_{2m}^{-}(q)$ and $H \leqs U{:}\left((q^m-1){:}m\right)<P$. Let $X$ be the natural module for the Levi factor $L= {\rm GL}_{m}(q)$. Now $U$ is indecomposable and reducible as a module for $L$, with composition series $0 \subseteq W \subseteq U$ where $W = q^{\frac{1}{2}m(m-1)}$. We may identify $W$ and $U/W$ with the modules $\L^2(X)$ and $X$, respectively.

Let $C = \la c \ra <L$ be a Singer cycle and write
$U = W \oplus W'$, where $W'$ is an irreducible $m$-dimensional module for $C$, which we can, and will, identify with $X$.

\vs

\noindent \textbf{Claim.} \emph{We have $G = HK$ with $H = X{:}C = q^m{:}(q^m-1)$.}

\vs

By the proof of \cite[Proposition 5.5]{LX} we have $H \cap K = X\cap K$, so $|X\cap K| \geqs 2$. Since $C$ acts transitively on the set of nontrivial elements in $X$, we may write
\[
X = \{1\} \cup z^C
\]
for some $z \in K$. With respect to an appropriate basis for $V$, we have
\begin{equation}\label{e:z}
z = \left(\begin{array}{cc}
I_m & A \\
0 & I_m
\end{array}\right)
\end{equation}
for a matrix $A$ of size $m$. Moreover, if $s={\rm rank}(A)$ then $z$ has Jordan form $[J_2^s,J_1^{2m-2s}]$ on $V$, where $J_i$ denotes a standard unipotent Jordan block of size $i$.

The conjugacy classes of involutions in $G$ are described by Aschbacher and Seitz in \cite{AS}. In their notation, which is now standard, every involution in $G$ is of type $a$, $b$ or $c$, and the $a$-type involutions $x$ are characterised by the property $(v,v^x) = 0$ for all $v \in V$.
From the description of the elements in $(U \cap K) \setminus W$ given in the proof of \cite[Proposition 5.5]{LX}, it is routine to check that there exists a vector $v \in V$ such that $(v,v^z) \ne 0$, whence $z$ is $G$-conjugate to $b_s$ (if $s$ is odd) or $c_s$ (if $s$ is even).

Set $N = N_L(C) = C{:}D$, where $D = \la \varphi \ra$ has order $m$, and let $\bar{X} = X \otimes \mathbb{F}_{q^m}$. Fix a basis $\{x_1, \ldots, x_m\}$ for $\bar{X}$ such that  $x_i^c = \mu^{q^{i-1}}x_i$ for $i=1, \ldots, m$, where $\mathbb{F}_{q^m}^{\times} = \la \mu \ra$. We may assume that $\varphi$ cyclically permutes the $x_i$.

Observe that $D$ has a $1$-dimensional fixed space on both $X$ and $\bar{X}$. Since $\la u \ra$ is the fixed space of $D$ on $\bar{X}$, where $u=x_1 + \cdots + x_m$, it follows that some nonzero $\mathbb{F}_{q^m}$-scalar multiple of $u$ is contained in $X$. Without loss of generality, we may assume that $u$ itself is in $X$ (the calculation below does not depend on the choice of scalar multiple). This means that $X$ is generated by $u$ (as a module for $C$) and we deduce that
\[
X = \{\a x_1 + \a^qx_2 + \cdots + \a^{q^{m-1}}x_{m} \,:\, \a \in \mathbb{F}_{q^m}\}.
\]
In terms of the basis $\{x_1, \ldots, x_m\}$ for $\bar{X}$, each nontrivial element of $X$ is represented by a nonsingular diagonal matrix. This implies that the matrix $A$ in \eqref{e:z} has rank $m$ and we conclude that all of the involutions in $X$ are of type $b_m$ (if $m$ is odd) or $c_m$ (if $m$ is even).

To complete the proof of the claim, let $n$ denote the number of orbits of $H$ on $\Delta = G/K$. By the orbit counting lemma, we have
\[
n = \frac{1}{|H|}\sum_{h \in H}{\rm fix}(h),
\]
where ${\rm fix}(h)$ is the number of fixed points of $h$ on $\Delta$. Since $H \cap K = X \cap K$, it follows that ${\rm fix}(h) = 0$ for all $h \in H \setminus X$ and thus
\[
n = \frac{1}{|H|}\sum_{x \in X}{\rm fix}(x) = \frac{1}{|H|}\left(|\Delta| + |z^C|\cdot {\rm fix}(z)\right).
\]
From the description of the conjugacy classes of involutions in $K$ and $G$ given in \cite{AS} we deduce that $z^G \cap K = z^K$ and thus
\[
{\rm fix}(z) = |\Delta| \cdot \frac{|z^G \cap K|}{|z^G|} = \frac{|C_G(z)|}{|C_K(z)|}.
\]
By reading off the relevant centralizer orders (see \cite[Tables 3.4.1, 3.5.1]{BG}, for example) we calculate that ${\rm fix}(z) = \frac{1}{2}q^{m}$ and thus $n=1$. In other words, $G=HK$ as required.
\end{proof}

\begin{rem}
The existence of a factorization $G = HK$ in this case with $H = q^m{:}(q^m-1)$ was first  established by Guralnick and Saxl in the proof of \cite[Theorem 3.1]{GS} (see p.139 and the first case considered in Step 7
of the proof). They proceed as follows. First let $J = {\rm Sp}_{2}(q^m) < G$. Then by arguing as in \cite{LPS} (see Case 3.2.1(d) on p.48) we deduce that $G = JK$ and $J \cap K = {\rm O}_{2}^{-}(q^m)$. Therefore, if $H = q^m{:}(q^m-1)$ is the Borel subgroup of $J$, then we must have $|H \cap K| = 2$ since $(|J \cap K|, |H|) = 2$, and we conclude that $G=HK$.
\end{rem}

\begin{prop}\label{p:case4}
Let $G$ be an almost simple group with socle $G_0 = {\rm Sp}_{4}(q)$, where $q$ is even, and let $H,K$ be subgroups satisfying the conditions in Case $4$ of Table $\ref{tab}$.
\begin{itemize}\addtolength{\itemsep}{0.2\baselineskip}
\item[{\rm (i)}] If $G=HK$, then $|H|$ is divisible by $q^2(q^2-1)$.
\item[{\rm (ii)}] If $G = {\rm Sp}_{4}(q)$ and $K = {\rm Sp}_{2}(q^2).2$, then $G=HK$ with $H = q^2{:}(q^2-1)<P_1$.
\end{itemize}
\end{prop}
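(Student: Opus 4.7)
The plan is to mirror the proof of Proposition~\ref{p:case3}, taking advantage of the special structure of the unipotent radical of a minimal parabolic in ${\rm Sp}_{4}(q)$ when $q$ is even. Let $V$ be the natural module with symplectic basis $\{e_1,e_2,f_2,f_1\}$ and let $P=P_1=U{:}L$ be the stabiliser of $\la e_1\ra$, where $L\cong{\rm GL}_{1}(q)\times{\rm Sp}_{2}(q)$ and $|U|=q^3$. A direct matrix computation with elements $u(b,c,d)\in U$ shows that $[U,U]=1$ in characteristic~$2$, so $U$ is elementary abelian of order $q^3$; as an $L$-module it decomposes as $U=W\oplus Z$, where $|W|=q^2$ carries the natural module for ${\rm Sp}_{2}(q)$ (tensored with the weight-one module for the ${\rm GL}_{1}(q)$-scalar) and $|Z|=q$ carries the weight-two scalar module (with ${\rm Sp}_{2}(q)$ acting trivially). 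The cyclic torus $T$ of order $q^2-1$ appearing in $A = q^3{:}(q^2-1).2$ is the product of the $(q-1)$-scalar and a non-split $(q+1)$-torus of ${\rm Sp}_{2}(q)$, and is cyclic because $\gcd(q-1,q+1)=1$ for $q$ even. Viewing $W$ as $\mathbb{F}_{q^2}$, the torus $T$ acts by multiplication by $\mathbb{F}_{q^2}^{\times}$ and is hence transitive on $W\setminus\{0\}$, and via squaring on $\mathbb{F}_q^{\times}$ it is transitive on $Z\setminus\{0\}$. One checks that the $T$-invariant subgroups of $U$ are exactly $1$, $Z$, $W$ and $U$, and (since $W$ and $Z$ are both $A$-invariant) the same list applies to any $A$-conjugate of $T$.

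Turning to part (i), we have $K\cap G_0\in\{{\rm Sp}_{2}(q^2),{\rm Sp}_{2}(q^2).2\}$. In the first case $|G{:}K|\geqs q^2(q^2-1)$ and the claim is immediate, so assume the latter, which gives $|G{:}K|\geqs\tfrac{1}{2}q^2(q^2-1)$. Since $q^2-1$ is odd and the odd part of $|A|$ equals $q^2-1$, the odd part of $|H|$ is precisely $q^2-1$; Hall's theorem then supplies a Hall $2'$-subgroup $T'\leqs H$ of order $q^2-1$, which is $A$-conjugate to $T$. Then $H\cap U$ is $T'$-invariant and hence lies in $\{1,Z,W,U\}$. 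For all sufficiently large $q$, the bounds $|H|\leqs 2(q^2-1)\cdot|H\cap U|$ and $|H|\geqs\tfrac{1}{2}q^2(q^2-1)$ force $|H\cap U|\geqs q^2$, so $q^2(q^2-1)$ divides $|H|$. Small values of $q$ can be verified directly using {\sc Magma}.

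For part (ii), set $H = W{:}T = q^2{:}(q^2-1)$ and $K={\rm Sp}_{2}(q^2).2$, so that
\[
\frac{|H|\cdot|K|}{|G|} = \frac{q^2(q^2-1)\cdot 2q^2(q^4-1)}{q^4(q^2-1)(q^4-1)} = 2,
\]
and it suffices to prove $|H\cap K|=2$. Following the template of the proof of Proposition~\ref{p:case3}, we first establish $H\cap K = W\cap K$: since $T$ is a torus of $L\leqs{\rm Sp}_{4}(q)$ of order $q^2-1$ fixing the $\mathbb{F}_q$-line $\la e_1\ra$, it is not $\mathbb{F}_{q^2}$-linear on $V$, while every odd-order element of $K$ lies in the $\mathbb{F}_{q^2}$-linear subgroup ${\rm Sp}_{2}(q^2)$; a brief torus analysis forces $T\cap K=1$. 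Using the transitivity of $T$ on $W\setminus\{0\}$, write $W = \{1\}\cup z^T$; a direct calculation shows that $z=u(b,c,0)$ has Jordan form $[J_2^2]$ on $V$ and satisfies $(v,vz)=0$ for every $v\in V$, so $z$ is an involution of Aschbacher-Seitz type $a_2$ (in the notation of~\cite{AS}). The orbit counting lemma applied to $H$ on $\Delta = G/K$ then gives
\[
n = \frac{1}{|H|}\bigl(|\Delta| + |z^T|\cdot{\rm fix}(z)\bigr),
\]
since ${\rm fix}(h)=0$ for $h\in H\setminus W$. Using the involution class data from~\cite{AS} and the centraliser orders recorded in~\cite[Tables 3.4.1, 3.5.1]{BG}, one verifies $z^G\cap K=z^K$ and ${\rm fix}(z)=|C_G(z)|/|C_K(z)|=\tfrac{1}{2}q^2$; substituting yields $n=1$, whence $G=HK$.

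The main technical difficulty is the orbit-counting step in part (ii): identifying $H\cap K$ with $W\cap K$ (which reduces to $T\cap K=1$) and evaluating $|C_G(z)|/|C_K(z)|$ to obtain ${\rm fix}(z)=q^2/2$. Both steps hinge on a careful interplay between the $\mathbb{F}_q$-symplectic structure on $V$ (defining $G_0$ and containing $z$) and the $\mathbb{F}_{q^2}$-symplectic structure (defining $K$), which together pin down how $W$ meets $K$ and how the $a_2$-class of $G$ restricts to $K$.
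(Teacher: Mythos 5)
The paper's own proof of this proposition is a single sentence: for $q$ even the exceptional graph automorphism of ${\rm Sp}_{4}(q)$ carries $P_1$ to $P_2$ and ${\rm Sp}_{2}(q^2).2$ to ${\rm O}_{4}^{-}(q)$, so Case 4 is exactly the image of Case 3 with $m=2$ and everything follows from Proposition \ref{p:case3}. Your direct argument inside $P_1$ is therefore a genuinely different route, but as written it contains a real error in part (ii).

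The error concerns the subgroup $W$ and the class of its involutions. Writing a general element of $U$ as $u(\b,\g,\delta)$, acting by $e_2\mapsto e_2+\b e_1$, $f_2\mapsto f_2+\g e_1$, $f_1\mapsto f_1+\g e_2+\b f_2+\delta e_1$, the product rule is $u(\b,\g,\delta)u(\b',\g',\delta')=u(\b+\b',\,\g+\g',\,\delta+\delta'+\b\g'+\b'\g)$. Hence the set $\{u(\b,\g,0)\}$ is \emph{not} a subgroup, and any subgroup complement to $Z=\{u(0,0,\delta)\}$ in $U$ has the form $\{u(\b,\g,Q(\b,\g))\}$ for a quadratic form $Q$ whose polarization is the symplectic form $\b\g'+\b'\g$. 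Invariance under the $C_{q+1}$ factor of $T$ forces $Q$ to be the unique anisotropic such form, so $Q(\b,\g)\ne 0$ whenever $(\b,\g)\ne(0,0)$. Since $(v,v^{u(\b,\g,\delta)})=d^2\delta$ for $v=ae_1+be_2+cf_2+df_1$, every nontrivial element of $W$ is an involution of type $c_2$, not $a_2$ as you claim. This is not cosmetic: the orbit-counting step depends on identifying $z^G\cap K$ and ${\rm fix}(z)$ for the correct class, and the $a$-type versus $c$-type distinction is precisely what the paper exploits in Proposition \ref{p:ex2} to show these $m=2$ factorizations are never exact. (Your final value ${\rm fix}(z)=\frac{1}{2}q^2$ agrees with what the correct $c_2$ computation — equivalently, the graph-automorphism transport of the $m=2$ case of Proposition \ref{p:case3} — yields, but the justification given does not establish it.) Two smaller points: $U$ is indecomposable as an $L$-module in characteristic $2$ (it splits only for the odd-order torus $T$); and in part (i) you ignore the outer automorphisms of ${\rm Sp}_{4}(q)$, of order up to $2f$, which weaken the bound $|H|\leqs 2(q^2-1)|H\cap U|$ and enlarge the set of small $q$ left to {\sc Magma}.
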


\begin{proof}
By applying a graph automorphism of $G_0$, we may view Case 4 in Table \ref{tab} as a special version of Case 3 (with $m=2$) and so the result follows from Proposition \ref{p:case3}.
\end{proof}

\begin{prop}\label{p:case5}
Let $G$ be an almost simple group with socle $G_0 = {\rm PSp}_{4}(q)$, where $q$ is odd, and let $H,K$ be subgroups satisfying the conditions in Case $5$ of Table $\ref{tab}$.
\begin{itemize}\addtolength{\itemsep}{0.2\baselineskip}
\item[{\rm (i)}] If $G=HK$ then $|H|$ is divisible by $q^3(q^2-1)$.
\item[{\rm (ii)}] If $G = {\rm PGSp}_{4}(q)$ and $K = {\rm PGSp}_{2}(q^2).2$, then $G=HK$ with $H = q^{1+2}{:}(q^2-1)$ contained in $P_1$.
\end{itemize}
\end{prop}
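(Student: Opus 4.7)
The plan is to adapt the strategies used in Propositions \ref{p:case2} and \ref{p:case3} to the extraspecial unipotent radical that appears in the $q$ odd setting. Let $P_1 = U{:}L$ be the stabilizer in $G_0 = {\rm PSp}_4(q)$ of a singular $1$-space of the natural module, so that $U \cong q^{1+2}$ is special with centre $Z = Z(U)$ of order $q$, and the Levi factor $L$ is a central product of ${\rm GL}_1(q)$ and ${\rm SL}_2(q)$ glued along $\{\pm I\}$; the quotient $U/Z$ is a $2$-dimensional $\mathbb{F}_q$-space affording the natural module for the ${\rm SL}_2(q)$-factor of $L$. By Table \ref{tab}, $H \cap G_0 \leqs A = U{:}D$, where $D \leqs L$ has order $q^2-1$ and contains a cyclic subgroup $\bar{C}$ of index $2$ whose image in ${\rm SL}_2(q)$ is a non-split maximal torus of order $q+1$.

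For part (i), the main step is to show that $U \leqs H$. The inclusion $K \cap G_0 \trianglerighteqslant {\rm PSp}_2(q^2)$ yields $|G:K|$ divisible by $\tfrac12 q^2(q^2-1)$, and hence $|H|_p \geqs q^2$; since the unique Sylow $p$-subgroup of $A$ is $U$, this gives $|H \cap U| \geqs q^2$ (with a minor adjustment needed when $p \mid |G/G_0|$, which is handled by direct computation). Next I would invoke Zsigmondy's theorem to select a primitive prime divisor $r$ of $q^2-1$; this exists unless $q+1$ is a $2$-power (i.e.\ $q \in \{3, 7, 31, \ldots\}$), in which case one can instead use an element of order $q+1$ in the non-split torus of ${\rm SL}_2(q)$, or appeal to {\sc Magma} for the smallest such $q$. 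Since $r \geqs 2f+1 > |G/G_0|$ with $q = p^f$, any element $x \in H$ of order $r$ lies in $G_0$, and its image in the ${\rm SL}_2(q)$-factor of $L$ is a non-split torus element acting irreducibly on $U/Z$. Since $\la x \ra$ normalises $U$, it normalises $H \cap U$; combining the irreducibility of the $\la x \ra$-action on $U/Z$ with the fact that $U$ admits no complement to $Z$, every $\la x \ra$-invariant subgroup of $U$ either lies in $Z$ or equals $U$, and the bound $|H \cap U| \geqs q^2 > |Z|$ then forces $H \cap U = U$.

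Once $U \leqs H \cap G_0$ is established, the divisibilities $q^3 \mid |H|$ and $|G:K| \mid |H|$ combine to give $\tfrac12 q^3(q^2-1) \mid |H|$, and the principal remaining obstacle is to recover the missing factor of $2$. I would deal with this by splitting on whether the quotient map $G \to G/G_0$ restricts nontrivially to $H$ or to $K$: if $G = G_0$ I would show by computing $|A \cap K|$ (in parallel with the argument used in the proof of Proposition \ref{p:case3}(ii)) that the index-$2$ subgroup $U\bar{C}$ of $A$ does not satisfy $U\bar{C} \cdot K = G_0$; if $G > G_0$ I would refine the index bound using the asymmetric contribution of the diagonal outer automorphism to $|G:K|$. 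In both cases, $q^3(q^2-1)$ divides $|H|$ as required, and this $2$-part bookkeeping is the most delicate point of the argument.

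For part (ii), I would construct the factorisation directly. The diagonal outer automorphism in $G = {\rm PGSp}_4(q)$ enlarges the maximal cyclic subgroup of the Levi of $P_1$ from order $(q^2-1)/2$ to order $q^2-1$, so $H = q^{1+2}{:}(q^2-1) < P_1$ exists as a subgroup of $G$ of order $q^3(q^2-1)$. Taking $K = {\rm PGSp}_2(q^2).2$ of order $2q^2(q^4-1)$, the identity $G = HK$ is equivalent to $|H \cap K| = 2q$, which can be verified by analysing the unipotent and semisimple contributions to $H \cap K$ exactly as in the proof of Proposition \ref{p:case3}(ii); alternatively one can appeal directly to the existence statement for this case in \cite{LX}.
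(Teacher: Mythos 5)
Your proposal attacks Case 5 directly inside the symplectic group, whereas the paper's own proof is a one-line reduction: via the exceptional isomorphism ${\rm PSp}_4(q)\cong \O_5(q)$, Case 5 is identified with Case 7 of Table \ref{tab} with $m=2$, and everything is delegated to Proposition \ref{p:case7}, whose statement and proof explicitly include $m=2$. Your direct argument for the divisibility of $|H|$ by $\frac{1}{2}q^3(q^2-1)$ is essentially sound and runs parallel to the first claim in the proof of Proposition \ref{p:case7}. One small repair is needed: the assertion that every $\la x\ra$-invariant subgroup $W\leqs U$ either lies in $Z$ or equals $U$ is not fully justified by ``$U$ admits no complement to $Z$'', which only excludes the case $W\cap Z=1$; you must also rule out $1<W\cap Z<Z$, which follows from $WZ=U$ together with $Z=[U,U]=[WZ,WZ]=[W,W]\leqs W$ (equivalently, $Z=\Phi(U)$ and $W\Phi(U)=U$ forces $W=U$). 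Your Zsigmondy detour is also unnecessary and slightly misstated (when $q$ is Mersenne, $H$ need not contain an element of order $q+1$, only one of order $4$ in $\bar{C}$, which still acts irreducibly on $U/Z$ for $q\geqs 7$), but this is cosmetic.

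The genuine gap is the recovery of the final factor of $2$, which you rightly flag as the delicate point but do not carry out. This is exactly where the paper invests real work: in Proposition \ref{p:case7} the $U$-orbits on $\Delta=G/K$ form a block system $\mathcal{B}$ of size $\frac{1}{2}(q^m-1)$ on which the Singer cycle acts transitively with its unique involution in the kernel, and a $2$-adic factorization of $q^m-1$ shows that when $q^m\equiv 1\pmod{4}$ (automatic for $m=2$, since $q^2\equiv 1\pmod{8}$ for $q$ odd) every admissible complement $\bigl(\frac{q^m-1}{2ab}.a\bigr).b$ contains that involution and is therefore intransitive on $\mathcal{B}$. Your substitute plan is not shown to work: computing $|A\cap K|$ alone does not locate $A\cap K$ relative to the several index-two subgroups of $A$ containing $U$ (the quotient $A/U\cong \frac{q^2-1}{2}.2$ has up to three such subgroups, and all of them, not just the cyclic $U\bar{C}$, must be eliminated), and the case $G>G_0$ is only gestured at even though ${\rm Out}(G_0)\cong C_2\times C_f$ produces a genuine case division. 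Part (ii) has a similar status: the arithmetic reduction to $|H\cap K|=2q$ is correct, but the verification is asserted rather than performed, and the appeal to Proposition \ref{p:case3}(ii) is not a drop-in substitute since that computation rests on the Aschbacher--Seitz classification of involutions in even characteristic; the paper instead obtains the factorization from Proposition \ref{p:case7}(ii), ultimately via \cite[Proposition 5.7]{LX}. Falling back on the existence statement in \cite{LX} is legitimate, but then you should check that it delivers the specific group $G={\rm PGSp}_4(q)$ with the full torus of order $q^2-1$, which is the point of part (ii).
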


\begin{proof}
In view of the isomorphism ${\rm PSp}_{4}(q) \cong \O_5(q)$, we can consider Case 5 in Table \ref{tab} as a special version of Case 7 (with $m=2$). Our analysis of Case 7 in Proposition \ref{p:case7} below includes the case $m=2$, and the result for Case 5 follows immediately.
\end{proof}

\subsection{Unitary groups}\label{ss:uni}

\begin{prop}\label{p:case6}
Let $G$ be an almost simple group with socle $G_0 = {\rm PSU}_{2m}(q)$, where $m \geqs 2$,
and let $H$ and $K$ be subgroups satisfying the conditions in Case $6$ of Table $\ref{tab}$.
\begin{itemize}\addtolength{\itemsep}{0.2\baselineskip}
\item[{\rm (i)}] If $G=HK$, then $|H|$ is divisible by $\frac{q^{2m}(q^{2m}-1)}{q+1}$.
\item[{\rm (ii)}] If $G = {\rm PGU}_{2m}(q)$ and $K = N_1$, then $G=HK$ with $H = q^{2m}{:}\frac{q^{2m}-1}{q+1}<P_m$.
\end{itemize}
\end{prop}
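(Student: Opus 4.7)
The plan is to follow the general strategy used in the proofs of Cases 1--3 (Propositions~\ref{p:case1}--\ref{p:case3}), combining a primitive-prime-divisor argument with a module-theoretic analysis of the unipotent radical for part~(i), and an orbit-counting computation for part~(ii). Let $V$ denote the natural $2m$-dimensional Hermitian module of $G_0$, fix a maximal totally isotropic subspace $W \subset V$, and let $P_m = U_0 L$ be its stabilizer in $G_0$, where $U_0$ is the abelian (elementary) unipotent radical of order $q^{m^2}$, identified with the $\mathbb{F}_q$-space of skew-Hermitian $m \times m$ matrices over $\mathbb{F}_{q^2}$, and $L$ acts on $U_0$ via $A \mapsto gAg^*$. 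Let $T = \la c \ra \leqs L$ be the cyclic ``pseudo-Singer'' subgroup of order $(q^{2m}-1)/((q+1)d)$ acting on $W \cong \mathbb{F}_{q^{2m}}$ as multiplication by a field generator; by Table~\ref{tab}, $H \cap G_0 \leqs U_0 T.m$. Since $K \cap G_0 \geqs {\rm SU}_{2m-1}(q)$, the index $|G:K|$ is divisible by $|G_0 : N_1| = q^{2m-1}(q^{2m}-1)/(q+1)$, the number of nondegenerate $1$-spaces in $V$.

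For part~(i), the index bound yields $q^{2m-1}(q^{2m}-1)/(q+1) \mid |H|$, and hence $r \mid |H|$ for any primitive prime divisor $r$ of $q^{2m}-1$ (which exists for $m \geqs 2$, except when $(m,q)=(3,2)$; this case, together with any small exceptions where the Sylow-type bound below becomes tight, will be handled directly with \textsc{Magma}). Pick $x \in H$ of order $r$; since $r \nmid m$, after conjugation $x$ may be taken to lie in $T$, and then $\la x \ra$ normalizes $H \cap U_0$. The module-theoretic crux is: (a)~every faithful irreducible $\mathbb{F}_q\la x\ra$-submodule of $U_0$ has $\mathbb{F}_q$-dimension $2m$, since the multiplicative order of $q$ modulo $r$ is $2m$; and (b)~the $\la x\ra$-fixed subspace of $U_0$ has $\mathbb{F}_q$-dimension at most $m$. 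To establish~(b), I diagonalize $x$ over $\overline{\mathbb{F}}_q$ so that $x$ acts on $W \otimes \overline{\mathbb{F}}_q$ with eigenvalues $\mu^{q^{2(i-1)}}$ (where $\mu$ has order $r$), making the $(i,j)$-entry of a matrix in $U_0 \otimes \overline{\mathbb{F}}_q$ an $x$-eigenvector with eigenvalue $\mu^{q^{2(i-1)}+q^{2(j-1)+1}}$; a short exponent calculation modulo $r$ shows this is trivial only when $m$ is odd and $i - j \equiv (m+1)/2 \pmod{m}$, giving at most $m$ such entries. Consequently, the orders of $\la x \ra$-invariant subgroups of $U_0$ lie in $\{q^d : 0 \leqs d \leqs m\} \cup \{q^d : 2m \leqs d \leqs m^2\}$, and a Sylow-type lower bound on $|H \cap U_0|$ (derived from $|H|$ being divisible by $q^{2m-1}$ and $|HU_0/U_0|$ having $p$-part dividing $m_p$) forces $|H \cap U_0| > q^m$, hence $|H \cap U_0| \geqs q^{2m}$. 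Combining this with the fact that $(q^{2m}-1)/(q+1) \mid |HU_0/U_0|$ gives the divisibility in~(i).

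For part~(ii), I take $X \leqs U_0$ to be an irreducible $\mathbb{F}_q\la c\ra$-submodule of order $q^{2m}$ on which $c$ acts transitively on the nonzero elements -- realizing $X$ as the image of $\mathbb{F}_{q^{2m}}$ under a standard Singer-eigenbasis embedding. Each nonzero element of $X$ corresponds to a rank-$m$ skew-Hermitian matrix $A$, so $\begin{pmatrix} I & A \\ 0 & I \end{pmatrix}$ has Jordan type $[J_2^m]$ on $V$, and these elements lie in a single $G$-conjugacy class $z^G$ meeting $K = N_1$. Set $H = X{:}\la c \ra$. Following the blueprint of Proposition~\ref{p:case3}(ii), I will show $H \cap K = X \cap K$ (either by direct computation, or by adapting the analysis in the proof of \cite[Proposition~5.9]{LX}), and then apply the orbit-counting lemma on $\Delta = G/K$:
\[
n = \frac{1}{|H|}\left(|\Delta| + |z^{\la c\ra}| \cdot \mathrm{fix}(z)\right),
\]
where $\mathrm{fix}(z) = |C_G(z)|/|C_K(z)|$ is computed via standard centralizer formulas in unitary groups; the arithmetic then collapses to $n = 1$, giving $G = HK$.

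The principal obstacle lies in part~(ii): correctly identifying the $G$-conjugacy class of the unipotent elements in $X \setminus \{1\}$ (Jordan type $[J_2^m]$ can split into several $G$-classes in the unitary setting, depending on a Hermitian refinement), verifying that this class meets $K=N_1$ in exactly one $K$-class with the right size, and checking that the resulting centralizer ratio, fed into the orbit-count formula, evaluates to a single orbit. The module bookkeeping for part~(i), including the skew-Hermitian constraint in the eigenvalue analysis, is comparatively routine once the $\mathbb{F}_q\la c\ra$-module structure of $U_0$ is laid bare.
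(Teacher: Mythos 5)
Your part (i) is essentially correct and in fact supplies the details that the paper's own proof only sketches: the computation of the eigenvalues $\mu^{q^{2(i-1)}+q^{2(j-1)+1}}$ of a ppd-element on $U \otimes \overline{\mathbb{F}}_q$, the conclusion that its fixed space has dimension $m$ ($m$ odd) or $0$ ($m$ even) so that $\la x\ra$-invariant subgroups of $U$ have order at most $q^m$ or at least $q^{2m}$, and the resulting divisibility. This matches the paper's approach.

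Part (ii), however, rests on two false premises and would not go through. First, there is no irreducible $2m$-dimensional $\mathbb{F}_q\la c\ra$-submodule of $U$ on which $c$ acts transitively on the nonzero elements, even if $c$ generates the full Singer cycle $C_{q^{2m}-1}$ of $L={\rm GL}_m(q^2)$: the eigenvalue of $c$ on $x_i \otimes x_j$ is $\mu^{q^{2i-2}+q^{2j-1}}$, and the exponent $q^{2i-2}\bigl(1+q^{2(j-i)+1}\bigr)$ is always divisible by $q+1$ (the inner exponent is odd), so every $\la c\ra$-orbit on the nonzero vectors of such a submodule has length dividing $\ell = \frac{q^{2m}-1}{q+1}$. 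This is exactly where the unitary case diverges from the symplectic Case 3 that you are modelling it on: there the relevant summand of $U$ is a copy of the natural module for ${\rm GL}_m(q)$ and the Singer cycle is transitive on its nonzero vectors; here the summand sits inside $X \otimes X^{(q)}$ and transitivity fails. Second, and fatally, it is not true that every nonzero element of $W$ has rank $m$. An element $u=\bigl(\begin{smallmatrix} I & A\\ 0& I\end{smallmatrix}\bigr)$ with ${\rm rank}(A)=m$ has Jordan type $[J_2^m]$ and its fixed space on $V$ is the totally isotropic space $\la e_1,\dots,e_m\ra$, so it fixes \emph{no} nondegenerate $1$-space; your assertion that this class meets $K=N_1$ is false. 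If all of $W\setminus\{1\}$ were of this type then $W$ would act semiregularly on $\Delta=G/K$, forcing $q^{2m}$ to divide $|\Delta|=q^{2m-1}\ell$, which is absurd; concretely, your orbit count with $H=q^{2m}{:}\frac{q^{2m}-1}{q+1}$ would return $n=|\Delta|/|H|=1/q$. The substance of the paper's proof is precisely to choose $W$ explicitly, parametrise it by $\a\in\mathbb{F}_{q^{2m}}$, and show that it contains exactly $\ell$ elements of rank $m-1$ (those with $\a^{\ell}=(-1)^m$), of Jordan type $[J_2^{m-1},J_1^2]$; these are the only elements with fixed points, and their contribution ${\rm fix}(z)=q^{2m-1}(q-1)$ is what makes the count collapse to $n=1$. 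So the "principal obstacle" is not a Hermitian refinement of the class $[J_2^m]$, but the fact that $W\setminus\{1\}$ splits into two ranks and the smaller-rank elements carry the entire fixed-point count.
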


\begin{proof}
Let $V$ be the natural module for $G_0$ and fix a standard basis $\{e_1, \ldots, e_m,f_1, \ldots, f_m\}$ with respect to the defining unitary form on $V$. We have
\[
H \cap G_0 \leqs q^{m^2}{:}\left(\frac{q^{2m}-1}{(q+1)d}.m\right) < P,
\]
where $d=(2m,q+1)$ and $P$ is the stabilizer in $G_0$ of the totally isotropic space $\la e_1, \ldots,e_m\ra$. Here $q^{m^2}$ is the unipotent radical of $P$, which is elementary abelian. We may assume that $K$ is the stabilizer in $G$ of a nondegenerate $1$-space, so $K$ is a maximal subgroup of type ${\rm GU}_{2m-1}(q) \times {\rm GU}_{1}(q)$ and
\[
|G:K| = q^{2m-1}\frac{q^{2m}-1}{q+1}.
\]
The case $(m,q) = (3,2)$ can be handled using {\sc Magma}, so we may assume $(m,q) \ne (3,2)$.

Suppose $G=HK$. Then $|H|$ is divisible by $q^{2m-1}\frac{q^{2m}-1}{q+1}$ and thus $H$ contains an element $x$ of order $r$, a primitive prime divisor of $q^{2m}-1$. By considering the action of $\la x \ra$ on the unipotent radical of $P$ we deduce that $q^{2m}$ divides $|H|$ and part (i) follows.

Now let us turn to part (ii). If $m=2$ then $m^2=2m$ and the existence of the factorization in (ii) follows immediately from \cite[Proposition 5.2]{LX}. For the remainder let us assume $m \geqs 3$.

It will be convenient to work in the matrix group $G = {\rm GU}_{2m}(q)$, so we take $K = {\rm GU}_{2m-1}(q) \times {\rm GU}_{1}(q)$ and we consider a subgroup
\[
H \leqs q^{m^2}{:}\left((q^{2m}-1){:}m\right) < q^{m^2}{:}{\rm GL}_{m}(q^2) = U{:}L = P,
\]
where $U = q^{m^2}$ and $L = {\rm GL}_{m}(q^2)$ (once again, $P$ denotes the stabilizer in $G$ of $\la e_1, \ldots, e_m\ra$). Our goal is to construct a subgroup $H = q^{2m}{:}(q^{2m}-1)$ with $G=HK$, so that by passing to the quotient group $G/Z(G)$, we obtain a factorization of ${\rm PGU}_{2m}(q)$ of the required form.

Let $C = \la c \ra = C_{q^{2m}-1}$ be a Singer cycle in $L$ and let $W$ be a $(2m)$-dimensional irreducible summand for the action of $C$ on $U$. Let $N = N_L(C) = C{:}D$, where $D = \la \varphi \ra$ has order $m$. Set $H = W{:}C < P$.

\vs

\noindent \textbf{Claim.} \emph{We have $G=HK$.}

\vs

Let $X$ be the natural module for $L$ and let $\s:L \to L$ be the involutory automorphism induced by the action of the field automorphism $\l \mapsto \l^q$ on matrix entries. If $X$ affords the representation $\rho:L \to {\rm GL}_{m}(q^2)$, then we write $X^{(q)}$ for the space $X$ with $L$-action given by the representation $\s\rho$.

Consider the module $X \otimes X^{(q)}$ for $L$, which has dimension $m^2$ as a space over $\mathbb{F}_{q^2}$. This module is defined over $\mathbb{F}_q$ (see \cite[Theorem 5.1.13]{BHR}, for example) and we can identify $U$ with the $\mathbb{F}_q$-span of a suitable basis for $X \otimes X^{(q)}$. In terms of the above standard basis for $V$, each $u \in U$ can be expressed as a matrix of the form
\begin{equation}\label{e:u}
u = \left(\begin{array}{cc}
I_m & A \\
0 & I_m
\end{array}\right)
\end{equation}
for some matrix $A$ of size $m$ and it follows that $u$ has Jordan form $[J_2^{s},J_1^{2m-2s}]$ on the natural module for $G$, where $s={\rm rank}(A)$.

Now extend scalars and set $\bar{X} = X \otimes \mathbb{F}_{q^{2m}}$ and $\bar{U} = U \otimes \mathbb{F}_{q^{2m}}$, so we have
\[
U \subseteq X \otimes X^{(q)} \subseteq \bar{X} \otimes \bar{X}^{(q)} = \bar{U}.
\]
Fix a basis $\{x_1, \ldots, x_m\}$ for $\bar{X}$ comprising eigenvectors for $c$ (a generator for the Singer cycle $C<L$), say $x_i^c = \mu^{q^{2(i-1)}}x_i$ for each $i$, where $\mathbb{F}_{q^{2m}}^{\times} = \la \mu \ra$. We may assume that $\varphi$ cyclically permutes the $x_i$, sending $x_1$ to $x_2$, etc. Then
\[
\{x_i \otimes x_j \,:\, 1 \leqs i,j \leqs m\}
\]
is a basis for $\bar{U}$. In particular, if $u \in U$ and we write $u = \sum_{i,j}b_{ij}(x_i \otimes x_j)$ and $B = (b_{ij})$, then the rank of $B$ coincides with the rank of $A$ in \eqref{e:u}.

Set $J = {\rm GL}_{m}(q^2).2 = L.2 \leqs {\rm \Gamma L}_{m}(q^2)$ and observe that we may view $U$ and $\bar{U}$ as modules for $J$. Set $M = N_J(C) = C{:}E$, where $E = \la \sigma \ra$ has order $2m$ and $\s^2=\varphi$. Without loss of generality, we may assume that $\s$ sends $x_i \otimes x_j$ to $x_{j+1} \otimes x_i$ for all $i,j$ (reading subscripts mod $m$).

Consider $\bar{W} = W \otimes \mathbb{F}_{q^{2m}}$ as a subspace of $\bar{U}$. Now $W$ is absolutely irreducible as a module for $M$, so $\bar{W}$ is irreducible for $M$. It is straightforward to decompose $\bar{U}$ as a sum of irreducible $M$-modules. For convenience of notation, let us assume that $\bar{W}$ is the submodule generated by $x_1 \otimes x_3$, so
\[
\bar{W} = \la x_1 \otimes x_3, x_4 \otimes x_1, x_2 \otimes x_4, \ldots, x_3 \otimes x_m \ra
\]
(reading subscripts mod $m$). Now $\s$ has a $1$-dimensional fixed space on both $W$ and $\bar{W}$. On the latter space, this is clearly $\la w \ra$, where
\[
w  = x_1 \otimes x_3+ x_4 \otimes x_1+ x_2 \otimes x_4+ \cdots +x_3 \otimes x_m.
\]
We may assume that $w$ is contained in $W$ (the computations below do not change if $w$ is replaced by a scalar multiple), which implies that
\[
W =
\{\a(x_1 \otimes x_3)+ \a^q(x_4 \otimes x_1)+ \a^{q^2}(x_2 \otimes x_4)+ \cdots +\a^{q^{2m-1}}(x_3 \otimes x_m) \,:\, \a \in \mathbb{F}_{q^{2m}} \}.
\]
Suppose
\[
v = \a(x_1 \otimes x_3)+ \a^q(x_4 \otimes x_1)+ \a^{q^2}(x_2 \otimes x_4)+ \cdots +\a^{q^{2m-1}}(x_3 \otimes x_m) = \sum_{i,j}b_{ij}(x_i \otimes x_j)
\]
is a nontrivial element of $W$ and set $B = (b_{ij})$ and $\ell = \frac{q^{2m}-1}{q+1}$. Then it is straightforward to check that
\[
{\rm rank}(B) = \left\{\begin{array}{ll}
m-1 & \mbox{if $\a^{\ell}=(-1)^m$} \\
m & \mbox{otherwise}
\end{array}\right.
\]
and thus $W$ contains $\ell$ elements with Jordan form $[J_2^{m-1},J_1^2]$ and the remaining nontrivial elements have Jordan form $[J_2^m]$.

Let $\Delta = G/K$ and let $n$ be the number of orbits of $H$ on $\Delta$. Note that we may identify $\Delta$ with the set of $1$-dimensional nondegenerate subspaces of the natural module for $G$. Also observe that $Z(G) = C_{q+1}$ is the kernel of the action.

Since $H \cap K = (W \cap K) \times Z(G)$ and elements of type $[J_2^m]$ have no fixed points on $\Delta$, it follows that
\[
n = \frac{1}{|H|}\sum_{h \in H}{\rm fix}(h) = \frac{1}{|H|}\left((q+1)|\Delta| + \ell(q+1)\cdot {\rm fix}(z)\right),
\]
where $z \in W \cap K$ has Jordan form $[J_{2}^{m-1},J_1^2]$. Now $z^G \cap K = z^K$ and we calculate that
\[
{\rm fix}(z) = \frac{|C_G(z)|}{|C_K(z)|} = q^{2m-1}(q-1).
\]
It is now routine to check that this gives $n=1$ and thus $G=HK$.
\end{proof}

\subsection{Orthogonal groups}\label{ss:ort}

We are now ready to complete the proof of Theorem \ref{t:main} by handling Cases 7, 8 and 9 in Table \ref{tab}.

First we consider Case 7. In the statement of the following proposition, it is convenient to allow the case $m=2$, which is excluded in Table \ref{tab} (cf. Proposition \ref{p:case5}). In part (ii), we use $N_{1}^{-}$ to denote the stabilizer in ${\rm SO}_{2m+1}(q)$ of a nondegenerate $1$-space $W$ such that $W^{\perp}$ is a minus-type orthogonal space.

\begin{prop}\label{p:case7}
Let $G$ be an almost simple group with socle $G_0 = \O_{2m+1}(q)$, where $m \geqs 2$ and $q$ is odd. Let $H,K$ be subgroups satisfying the conditions in Case $7$ of Table $\ref{tab}$.
\begin{itemize}\addtolength{\itemsep}{0.2\baselineskip}
\item[{\rm (i)}] If $G=HK$, then $|H|$ is divisible by $\frac{1}{e}q^{\frac{1}{2}m(m+1)}(q^m-1)$, where
\[
e=\left\{
 \begin{array}{ll}
2 & \mbox{if $q^m \equiv 3 \imod{4}$} \\
1 & \mbox{otherwise.}
\end{array}\right.
\]
\item[{\rm (ii)}] If $G = {\rm SO}_{2m+1}(q)$ and $K = N_1^{-}$, then $G=HK$ with
\[
H = (q^{\frac{1}{2}m(m-1)}.q^m){:}\frac{q^m-1}{e}<P_m.
\]
\end{itemize}
\end{prop}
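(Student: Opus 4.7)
The plan follows the template set by Propositions~\ref{p:case3} and~\ref{p:case6}, adapted to the two-step unipotent radical of the parabolic $P_m$ in $\Omega_{2m+1}(q)$. Fix a Levi decomposition $P_m = U{:}L$, where $L \cong \mathrm{GL}_m(q)$ (modulo scalars) acts on $U$ with centre $W$ of order $q^{m(m-1)/2}$ and abelian quotient $X = U/W$ of order $q^m$. As $L$-modules, $X$ is the natural module and $W \cong \Lambda^2 X$, with the commutator map on $U$ realising this isomorphism.

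For part~(i), I would first note that $G = HK$ forces $|G:K|$ to divide $|H|$; a direct computation of $|\Omega_{2m+1}(q)|/|\Omega_{2m}^{-}(q)|$ shows that $|G:K|$ is divisible by $\frac{1}{2}q^m(q^m-1)$, up to a small factor bounded by $|G/G_0|$. In particular $H$ contains an element $x$ of order $r$, a primitive prime divisor of $q^m-1$, which exists since $q$ is odd and $m\geqs 2$. Up to conjugacy $x$ lies in $L$ and acts irreducibly on $X$, so no proper nontrivial $\langle x\rangle$-invariant subgroup of $X$ exists; since the commutator map $\Lambda^2 X \to W$ is $\langle x\rangle$-equivariant and $W$ is $\langle x\rangle$-irreducible, the same holds on $W$. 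A standard argument using the Dedekind modular law applied to $H \cap U$ and its normal closure under $\langle x\rangle$ then forces $H \cap U = U$, yielding the full $q^{m(m+1)/2}$ factor of $|H|$. The remaining $(q^m-1)/e$ factor comes from the divisibility of $|H|$ by $|G:K|$, after accounting for the parity factor $e$ encoded in the discriminant of $W^{\perp}$.

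For part~(ii), take $G = \mathrm{SO}_{2m+1}(q)$, $K = N_1^{-}$, and set $H = U \cdot C$, where $C = \langle c\rangle \leqs L$ is a cyclic subgroup of order $(q^m-1)/e$ chosen so that $c$ acts as a Singer-type element on $X$. To prove $G = HK$, I would apply the orbit-counting lemma to the action of $H$ on $\Delta = G/K$, mirroring the proof of Proposition~\ref{p:case3}. Since $H \cap K \subseteq U \cap K$ and ${\rm fix}(h) = 0$ for every $h \in H \setminus U$, the number of $H$-orbits on $\Delta$ reduces to
\[
n = \frac{1}{|H|}\sum_{u \in U \cap K}{\rm fix}(u).
\]
The nontrivial elements of $U \cap K$ are involutions fixing the distinguished nondegenerate minus-type $1$-space; their $G$-conjugacy class is determined by their Jordan form on the natural module and satisfies $z^G \cap K = z^K$, so ${\rm fix}(z) = |C_G(z)|/|C_K(z)|$ can be read off from the centraliser orders recorded in~\cite{BG}. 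A direct calculation then yields $n = 1$, proving $G = HK$.

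The main obstacle is the two-step nature of $U$: unlike the elementary abelian radicals handled in Propositions~\ref{p:case3}--\ref{p:case6}, here control of $H \cap U$ must be propagated from the top quotient $X$ down to the centre $W$ via the commutator pairing, which is the crux of part~(i). In part~(ii), the extra subtlety is the fudge factor $e$: its value depends on $q^m \bmod 4$, which governs the discriminant class of the involutions in $U$ landing inside $K$ and hence the number of $C$-orbits of such involutions contributing to the fixed-point sum. This parity distinction must be tracked carefully throughout the orbit count to obtain the clean final answer $n=1$.
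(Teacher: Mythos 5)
There are genuine gaps in both parts. In part (i), the decisive issue is the factor $e$: bare divisibility of $|H|$ by $|G:K| = \frac{1}{2}q^m(q^m-1)$ only yields $\frac{1}{2}(q^m-1)$ on the prime-to-$p$ part, whereas the proposition asserts divisibility by the \emph{full} $q^m-1$ whenever $q^m \equiv 1 \imod{4}$. Your one-line appeal to ``the parity factor $e$ encoded in the discriminant of $W^{\perp}$'' is not an argument and does not bridge this gap. The paper's treatment of this point is the real content of part (i): having shown $U \leqs H$, it considers the block system $\mathcal{B}$ formed by the $U$-orbits on $G/K$ (of size $\frac{1}{2}(q^m-1)$), observes that the unique involution of the Singer cycle $C$ lies in the kernel of the action on $\mathcal{B}$, and then proves by a $2$-adic computation (factorizing $q^m-1=r^{2^k}-1$) that when $q^m\equiv 1\imod{4}$ every admissible complement $E=\frac{q^m-1}{2ab}.a.b$ contains that involution and is therefore intransitive on $\mathcal{B}$. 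Nothing in your sketch plays this role. Two further problems in part (i): a primitive prime divisor of $q^m-1$ need \emph{not} exist when $m=2$ and $q$ is a Mersenne prime (and this case matters, since Case 5 of Table \ref{tab} is reduced to Case 7 with $m=2$); and your claim that $W\cong \Lambda^2 X$ is $\la x\ra$-irreducible is false in general --- the eigenvalues $\l^{q^i+q^j}$ do not form a single Galois orbit, and indeed Proposition \ref{p:case8} relies on $\Lambda^2 X$ having a proper $m$-dimensional summand for the full Singer cycle. The correct mechanism for passing from $X$ to $Z(U)$ is the commutator identity: once $(H\cap U)Z(U)=U$ one gets $Z(U)=[U,U]=[H\cap U,H\cap U]\leqs H\cap U$; and the case $H\cap U\leqs Z(U)$, where the primitive prime divisor element acts trivially on the image in $X$, requires the separate counting argument given in the paper.

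In part (ii) your orbit-counting template from Proposition \ref{p:case3} does not transfer. Since $q$ is odd, the nontrivial elements of $U$ are unipotent elements of odd order, not involutions; the sum in the orbit-counting lemma must run over all $h\in H$ with $h^G\cap K\neq\emptyset$ (not merely over $U\cap K$, and it must include the contribution $|\Delta|$ from the identity); and here $H\cap U$ is the \emph{whole} radical of order $q^{\frac{1}{2}m(m+1)}$, which meets many $G$-classes, so the computation is nothing like the single-class count of Proposition \ref{p:case3}. The paper avoids all of this: it quotes \cite{LX} to obtain $G=JK$ for $J=U{:}C$, and then descends to $H=U{:}D$ with $|C:D|=2$ by showing that $D$ remains transitive on the block system $\mathcal{B}$ precisely when $q^m\equiv 3\imod{4}$ --- the same mechanism that produces $e$ in part (i).
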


\begin{proof}
Let $V$ be the natural module for $G_0$ and fix a standard basis
\[
\{e_1, \ldots, e_m,f_1, \ldots, f_m, v\}.
\]
Let $P$ be the stabilizer in $G_0$ of the maximal totally singular subspace
$\la e_1, \ldots, e_m\ra$ and write $P = U{:}L$, where the unipotent radical $U$ is nonabelian of order $q^{\frac{1}{2}m(m+1)}$ and $L = \frac{1}{2}{\rm GL}_{m}(q)$ is a Levi factor. Here $Z(U) = q^{\frac{1}{2}m(m-1)}$ is the unique minimal normal subgroup of $P$ and we may identify $U/Z(U)$ with the natural module for ${\rm GL}_{m}(q)$. Write $q=p^f$, where $p$ is a prime.

Now
\[
H \cap G_0 \leqs (q^{\frac{1}{2}m(m-1)}.q^m){:}\left(\frac{q^m-1}{2}.m\right) < P
\]
and we may assume that $K$ is the stabilizer in $G$ of the $1$-space $\la e_m+\mu f_m\ra$, where $\mu \in \mathbb{F}_q$ is a non-square. In other words, $K$ is a maximal subgroup of $G$ of type ${\rm O}_{2m}^{-}(q)$ and we note that
$|G:K| = \frac{1}{2}q^m(q^m-1)$.

As in the proof of \cite[Proposition 5.5]{LX}, define the following linear maps on $V$ for $1\leqs i < j \leqs m$ and $\l \in \mathbb{F}_q$, where
\[
y_{i,j}(\l): \;\; f_i \mapsto f_i+\l e_j,\;\; f_j \mapsto f_j -\l e_i
\]
and $y_{i,j}(\l)$ fixes the remaining basis elements. It is straightforward to check that each $y_{i,j}(\l)$ is contained in $Z(U)$. Therefore, since $|Z(U)| = q^{\frac{1}{2}m(m-1)}$, we conclude that
\[
Z(U) = \la y_{i,j}(\l) \,:\, 1\leqs i<j \leqs m,\; \l \in \mathbb{F}_q \ra
\]
and thus
\[
Z(U) \cap K = \la y_{i,j}(\l) \,:\, 1\leqs i<j < m,\; \l \in \mathbb{F}_q \ra
\]
is elementary abelian of order $q^{\frac{1}{2}(m-1)(m-2)}$.

\vs

\noindent \textbf{Claim.} \emph{We have $G = HK$ only if $U \leqs H$.}

\vs

Seeking a contradiction, suppose $G=HK$ and $W = H \cap U<U$. Notice that if
${\rm SO}_{2m+1}(q) \not\leqs G$, then by taking $G_1 = \la {\rm SO}_{2m+1}(q), G \ra$ we can construct a factorization $G_1 = HK_1$ with $K_1 = N_{G_1}(K)$. Therefore, without loss of generality, we may assume that $G$ contains ${\rm SO}_{2m+1}(q)$, so $G = {\rm SO}_{2m+1}(q).f'$ for some divisor $f'$ of $f$.

First assume $W=Z(U)$. Then without loss of generality, we may assume that $H = W{:}(C.m.f')$, where $C<{\rm GL}_{m}(q)$ is a Singer cycle. Since $Z(U) \cap K \leqs H \cap K$, it follows that $q^{\frac{1}{2}(m-1)(m-2)}$ divides
\[
|H \cap K| = \frac{|H||K|}{|G|} = 2mf'q^{\frac{1}{2}m(m-3)}
\]
and thus $q$ divides $2mf'$. Let $J = W{:}C$, which is a normal subgroup of $H$ of index $mf'$. Then $J \cap K$ is normal in $H \cap K$ and the proof of \cite[Proposition 5.5]{LX} implies that $J \cap K = W \cap K$. Therefore, we can write
\[
H \cap K = (J \cap K).R,
\]
where $R \leqs C.m.f'$ has order $\frac{2mf'}{q}$. Since $|R \cap C|=1$, we have
\[
R \cong RC/C \leqs (C.m.f')/C \cong C_m.C_{f'}
\]
and thus $|R|=\frac{2mf'}{q}$ divides $mf'$. But this is absurd since $q$ is odd.

Next suppose $W<Z(U)$. Then $G = \tilde{H}K$, where $\tilde{H} = Z(U)H$ and $\tilde{H} \cap U = Z(U)$. But we have just ruled out the existence of such a factorization, so the case $W<Z(U)$ does not arise. 

To complete the proof of the claim, we may assume $W \cap Z(U) < W$, in which case $WZ(U)/Z(U) \leqs U/Z(U)$ is nontrivial. Write $H = W{:}D$, where $|D|$ is divisible by $\frac{q^m-1}{2}$, and note that
\[
HZ(U)/Z(U) = (WZ(U)/Z(U)){:}D \leqs (U/Z(U)){:}D.
\]

Suppose $q^m-1$ is divisible by a primitive prime divisor $r$. Then $D$ contains an element $x$ of order $r$ and $\la x \ra$ does not normalize a proper nontrivial subgroup of $U/Z(U)$. Therefore, $WZ(U)/Z(U) = U/Z(U)$ and thus $WZ(U)=U$. In particular, $W \cap Z(U)$ is a normal subgroup of $U$. If $W \cap Z(U)=1$ then $W \cong U/Z(U)$ is abelian and thus $U \cong W \times Z(U)$ is abelian, which is a contradiction. Therefore, $W \cap Z(U)$ is a nontrivial normal subgroup of $U$, so it must contain $Z(U)$, which is the unique minimal normal subgroup of $U$. It follows that $Z(U) \leqs W \cap Z(U)$, so $W=U$ and once again we have reached a contradiction.

Finally, let us assume $q^m-1$ does not have a primitive prime divisor, so Zsigmondy's theorem \cite{Zsig} implies that $m=2$ and $q$ is a Mersenne prime. In particular, $G= {\rm SO}_{5}(q)$, $|U|=q^3$, $|Z(U)|=q$ and $|W|=q^2$ (recall that $|H|$ is divisible by $|G:K| = \frac{1}{2}q^2(q^2-1)$). Let us also note that $D \leqs (q^2-1){:}2$. The case $q=3$ can be handled directly using {\sc Magma} \cite{magma}, so we may assume $q \geqs 7$. Then $D$ contains an element $x$ of order $\frac{q^2-1}{4}$ and $\la x \ra$  does not normalize a proper nontrivial subgroup of $U/Z(U)$. Therefore, $WZ(U)/Z(U) = U/Z(U)$ and we reach a contradiction by repeating the argument in the previous paragraph.

\vs

With the claim in hand, let $G$ be an arbitrary almost simple group with socle $G_0$ and a factorization $G=HK$. Since $U \leqs H$ and $\frac{1}{2}q^m(q^m-1)$ divides $|H|$, it follows that $|H|$ is divisible by $\frac{1}{2}q^{\frac{1}{2}m(m+1)}(q^m-1)$.

Suppose $|H| = \frac{1}{2}q^{\frac{1}{2}m(m+1)}(q^m-1)$. As before, without loss of generality we can assume that $G = {\rm SO}_{2m+1}(q).f'$, where $f'$ divides $f$. Then we may write
\[
H = U{:}\left(\frac{q^m-1}{2ab}.a\right)\!.b,
\]
where $a$ divides $m$ and $b$ divides $f'$. Let $C<{\rm GL}_{m}(q)$ be a Singer cycle and let $D$ be the unique index-two subgroup of $C$.

First assume $a=b=1$, so $H = U{:}D$ and we define $J = U{:}C$. Then \cite[Proposition 5.7]{LX} implies that $G=JK$, so $J$ acts transitively on $\Delta = G/K$. In turn, $U$ acts $\frac{1}{2}$-transitively on $\Delta$ (that is, all the orbits of $U$ have the same size, namely $q^m$) and the orbits of $U$ form a block system $\mathcal{B}$ for the action of $J$, with $|\mathcal{B}| = \frac{1}{2}(q^m-1)$. Now $J/U \cong C$ is transitive on $\mathcal{B}$ and thus the unique involution $z \in C$ is in the kernel of this action. It follows that $D$ acts transitively on $\mathcal{B}$, whence $G = HK$ if and only if $z \not\in D$. That is, $G=HK$ if and only if $|D|$ is odd, which is equivalent to the condition $q^m \equiv 3 \imod{4}$.

Now assume $(a,b) \ne (1,1)$ and write $H = U{:}E$, where $E=\left(\frac{q^m-1}{2ab}.a\right)\!.b$. As above, the orbits of $U$ on $\Delta=G/K$ form a block system $\mathcal{B}$ with $|\mathcal{B}| = \frac{1}{2}(q^m-1)$. Since $C$ acts transitively on $\mathcal{B}$, it follows that the unique involution in $C$ is contained in the kernel of this action.

\vspace{2mm}

\noindent \textbf{Claim.} \emph{If $q^m \equiv 1 \imod{4}$, then $\frac{q^m-1}{2ab}$ is even.}

\vspace{2mm}

Write $fm=2^k\ell$ with $\ell$ odd and set $r=p^{\ell}$.
Then $q^m=p^{fm}=r^{2^k}$.
Since $a$ divides $m$ and $b$ divides $f$, the 2-part of $ab$ divides $2^k$, which is the 2-part of $fm$. Factorize $q^m-1$ as follows
\[
q^m-1=r^{2^k}-1=(r^{2^{k-1}}+1)(r^{2^{k-2}}+1)\cdots(r+1)(r-1).
\]
Since each factor $r^{2^i}+1$ is even and $(r+1)(r-1)$ is divisible by 8, we conclude that $q^m-1$ is divisible by $4ab$ and so $\frac{q^m-1}{2ab}$ is even as claimed.

\vspace{2mm}

It follows that if $q^m \equiv 1 \imod{4}$ then $E$ contains the involution in $C$ and thus $E$ is intransitive on $\mathcal{B}$. In particular, $G \ne HK$. An entirely similar argument shows that if
$q^m \equiv 1 \imod{4}$ then $|H|$ is divisible by $q^{\frac{1}{2}m(m+1)}(q^m-1)$ and we have now established part (i).

Finally, let us turn to part (ii), so $G = {\rm SO}_{2m+1}(q)$. If $q \equiv 1 \imod{4}$ then the result follows immediately from \cite[Proposition 5.7]{LX}. Similarly, if $q^m \equiv 3 \imod{4}$ then the above argument (the case $a=b=1$) shows that $G=HK$ with $H = U{:}D$ and $K = N_{1}^{-}$. This completes the proof of the proposition.
\end{proof}

Next we handle Case 8 in Table \ref{tab}. Note that we include the case $m=4$, so this also covers Case 9 in Table \ref{tab}. In particular, the following proposition completes the proof of Theorem \ref{t:main}. In part (iii), $N_1$ denotes the stabilizer of a nondegenerate $1$-space.

\begin{prop}\label{p:case8}
Let $G$ be an almost simple group with socle $G_0 = {\rm P\O}_{2m}^{+}(q)$, where $m \geqs 4$, and let $H$ and $K$ be subgroups satisfying the conditions in Case $8$ of Table $\ref{tab}$.
\begin{itemize}\addtolength{\itemsep}{0.2\baselineskip}
\item[{\rm (i)}] If $G=HK$, then $|H|$ is divisible by $\frac{q^{m}(q^m-1)}{(2,q-1)}$.
\item[{\rm (ii)}] If $q$ is even, $G = \O_{2m}^{+}(q)$ and $K = {\rm Sp}_{2m-2}(q)$, then $G=HK$ with $H = q^m{:}(q^m-1)$.
\item[{\rm (iii)}] If $q$ is odd, $G = {\rm PSO}_{2m}^{+}(q)$ and $K = N_{1}$, then $G=HK$ with $H = q^m{:}\frac{q^m-1}{2}$.
\end{itemize}
In parts (ii) and (iii), the given subgroup $H$ is contained in $P_m$.
\end{prop}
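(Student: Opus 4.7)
The plan is to follow the templates established in Propositions \ref{p:case3} and \ref{p:case6} for the symplectic and unitary analogs. We fix a hyperbolic basis $\{e_1, \ldots, e_m, f_1, \ldots, f_m\}$ of the natural module $V$, let $P = P_m = U{:}L$ be the stabilizer in $G_0$ of $\la e_1, \ldots, e_m\ra$, and identify the elementary abelian unipotent radical $U$ (of order $q^{m(m-1)/2}$) with the exterior square $\Lambda^2 X$ as a module for $L = \frac{1}{d}{\rm GL}_m(q)$ acting on its natural module $X$, where $d = (4, q^m - 1)$. Let $C = \la c \ra \leqs L$ be a Singer cycle and $N_L(C) = C{:}\la \varphi \ra$ its normalizer, with $\varphi$ of order $m$ acting as a Frobenius twist. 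We take $K$ to be the stabilizer in $G$ of a suitable $1$-subspace of $V$ (nonsingular for $q$ even, nondegenerate for $q$ odd).

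For part (i), assume $G = HK$. A Zsigmondy argument produces an element $x \in H$ of order $r$, a primitive prime divisor of $q^m - 1$ (with the exception $(m,q) = (6,2)$ handled by {\sc Magma} \cite{magma}). Extending scalars to $\mathbb{F}_{q^m}$ and fixing $c$-eigenvectors $\{x_1, \ldots, x_m\}$ for $\bar X$, the eigenvalues of $\la x \ra$ on $\bar U = \Lambda^2\bar X$ are $\nu^{q^i + q^j}$ $(0 \leqs i < j \leqs m-1)$, where $\nu$ is a primitive $r$-th root of unity. Since the multiplicative order of $q$ modulo $r$ equals $m$, one checks $\nu^{q^i + q^j} = 1$ if and only if $m$ is even and $j - i = m/2$; hence the $\la x \ra$-fixed subspace of $\bar U$ has dimension $0$ for $m$ odd and $m/2$ for $m$ even. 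Descending to $\mathbb{F}_q$, the $C$-invariant subgroups of $U$ have orders in $\{q^{jm}: j \geqs 0\}$ for $m$ odd, and in $\{1\} \cup \{q^{m/2 + jm}: j \geqs 0\}$ for $m$ even. Since $|HU/U|$ divides $|A/U| = m(q^m-1)/d$ (whose $p$-part is at most $|m|_p$), the $p$-part $q^{m-1}$ of $|G:K|$ forces $|H \cap U| \geqs q^{m-1}/|m|_p$; for $m \geqs 4$ this exceeds $q^{m/2}$ apart from a small number of cases (such as $(m,q) = (4,2)$ and $(4,4)$, which we verify by {\sc Magma}), and we conclude $|H \cap U| \geqs q^m$. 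Combined with the divisibility of $|HU/U|$ by $(q^m-1)/d$, we obtain $|H| \geqs q^m(q^m - 1)/d$.

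For parts (ii) and (iii), we take $W \leqs U$ to be an $m$-dimensional irreducible $\mathbb{F}_q\la c \ra$-summand of $U$ and set $H = W{:}D$, where $D = C$ for $q$ even and $D$ is the unique index-$2$ subgroup of $C$ for $q$ odd. By construction $|H|$ matches the claimed value and $H \leqs P_m$. To prove $G = HK$, we apply orbit-counting to $H$ acting on $\Delta = G/K$, following Propositions \ref{p:case3} and \ref{p:case6} and \cite[Propositions 5.7, 5.9]{LX}. First we argue that $H \cap K = W \cap K$, so only elements of $W$ contribute to $\sum_{h \in H} {\rm fix}(h)$. Parametrizing $W$ by $\mathbb{F}_{q^m}$ via a Frobenius-fixed generator $w \in \bar W$, we write each element of $W$ as $\left(\begin{smallmatrix} I & A \\ 0 & I \end{smallmatrix}\right)$ for some skew-symmetric matrix $A$ (or symmetric with zero diagonal when $q$ is even); the $\mathbb{F}_q$-rank of $A$ determines the Jordan form on $V$ and hence the $G$-conjugacy class. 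The generic element has rank $m$ (Jordan form $[J_2^m]$, hence no fixed points on $\Delta$), while the exceptional lower-rank elements contribute ${\rm fix}(w) = |C_G(w)|/|C_K(w)|$ via $w^G \cap K = w^K$. Summing with the centralizer formulas in \cite[Chapter 3]{BG} yields the orbit count $n = 1$.

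The main obstacle will be the rank/Jordan form analysis for elements of $W$, together with the bookkeeping of $G$-conjugacy class splittings in $\Omega_{2m}^+(q)$ versus ${\rm SO}_{2m}^+(q)$ when $q$ is odd (and of the $a$-, $b_s$-, $c_s$-type involutions of \cite{AS} when $q$ is even). Once the rank statistics on $W$ are settled, the orbit-counting identity reduces to a direct verification parallel to the unitary case of Proposition \ref{p:case6}.
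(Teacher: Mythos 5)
Your overall strategy coincides with the paper's: identify $U$ with $\Lambda^2(X)$, use a primitive prime divisor element for the lower bound in (i), and prove (ii) and (iii) by taking an $m$-dimensional irreducible Singer-submodule $W \leqs U$, computing the ranks of the alternating matrices representing the elements of $W$, and applying the orbit counting lemma on $\Delta = G/K$. Your treatment of (i) is in fact more explicit than the paper's (which simply refers back to Proposition \ref{p:case3}): you correctly note that for $m$ even a ppd element fixes an $(m/2)$-dimensional subspace of $U$, so one needs the additional observation that any admissible order exceeding $q^{m/2}$ is already at least $q^m$. Two small slips there: $H\cap U$ is only $\la x\ra$-invariant rather than $C$-invariant, so all orders $q^j$ with $j \leqs m/2$ do occur; and the list of borderline cases where $q^{m-1}/|m|_p \leqs q^{m/2}$ is not exhaustive (e.g.\ $(m,q)=(8,2)$ gives equality). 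Neither affects the conclusion.

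Two steps in (ii)/(iii) would fail as written. First, for $q$ odd the subgroup $H=W{:}D$ with $D$ the index-two subgroup of the Singer cycle $C \leqs {\rm GL}_{m}(q)$ is \emph{not} transitive when $q^m \equiv 1 \imod{4}$: in that case $D$ contains $c^{(q^m-1)/2}=-I_{2m}$, which is central in ${\rm SO}_{2m}^{+}(q)$ and lies in $K=N_1$, so it acts trivially on $\Delta$ and the orbit count returns $n=2$. The paper instead takes $H=W{:}C$ with the \emph{full} Singer cycle inside ${\rm SO}_{2m}^{+}(q)$ and passes to the quotient ${\rm PSO}_{2m}^{+}(q)$; since $-I \in C$ is killed by the quotient (and already acts trivially on $W \leqs \Lambda^2(X)$), the image has the required shape $q^m{:}\frac{q^m-1}{2}$ and the factor of $2$ coming from the kernel cancels correctly in the orbit count. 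Second, your claim that ``the generic element has rank $m$'' is false for $m$ odd: the matrices $A$ arising from $U$ are alternating, hence of even rank, so for $m$ odd \emph{every} nontrivial element of $W$ has rank $m-1$ (Jordan form $[J_2^{m-1},J_1^2]$) and \emph{all} of them contribute fixed points, whereas the dichotomy you describe (with $\frac{q^m-1}{q+1}$ exceptional elements of rank $m-2$ and the rest of rank $m$ contributing nothing) occurs only for $m$ even. You defer the rank statistics, so this is recoverable, but the orbit count has genuinely different shapes in the two parities of $m$ and both must be carried out.
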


\begin{proof}
To begin with, let us assume $q$ is even. Let $V$ be the natural module for $G_0$ and fix a standard basis $\mathcal{B}=\{e_1, \ldots, e_m,f_1, \ldots, f_m\}$. Let $P = U{:}L$ be the stabilizer in $G_0$ of the totally singular subspace $\la e_1, \ldots, e_m\ra$, where the unipotent radical $U$ is elementary abelian of order $q^{\frac{1}{2}m(m-1)}$ and $L = {\rm GL}_{m}(q)$ is a Levi factor. Then
\[
H \cap G_0 \leqs q^{\frac{1}{2}m(m-1)}{:}\left((q^m-1){:}m\right)<P
\]
and we may assume that $K$ is maximal among subgroups of $G$ with $K \cap G_0  = {\rm Sp}_{2m-2}(q)$. Note that $|G:K|$ is divisible by $q^{m-1}(q^m-1)$. We can use {\sc Magma} to handle the case $G_0 = \O_{12}^{+}(2)$, so we may assume $(m,q) \ne (6,2)$.

If $G=HK$ then $|H|$ is divisible by $q^{m-1}(q^m-1)$ and by repeating the argument in the proof of Proposition \ref{p:case3}, we deduce that $q^m(q^m-1)$ divides $|H|$. This gives part (i).

Next let us turn to (ii). Set $G = \Omega_{2m}^{+}(q)$. Let $C = \la c \ra$ be a Singer cycle in $L$ and set $N = N_L(C) = C{:}D$, where $D = \la \varphi \ra$ has order $m$.
As a module for $L$, we may identify $U$ with $\L^2(X)$, where $X$ is the natural module for $L$. In terms of the basis $\mathcal{B}$, we can express each $u \in U$ as in \eqref{e:u} for some matrix $A$ of size $m$ and we see that $u$ has Jordan form $[J_2^s,J_1^{2m-2s}]$ on $V$, where $s={\rm rank}(A)$. Moreover, one checks that every nontrivial element in $U$ is an involution of type $a$ (with respect to \cite{AS}), so $u$ is $G$-conjugate to $a_s$.

Write $\bar{X} = X \otimes \mathbb{F}_{q^m}$ and $\bar{U} = \L^2(\bar{X})$. The action of $c$ on $\bar{X}$ is diagonalizable and we may choose a basis $\{x_1, \ldots, x_m\}$ for $\bar{X}$ such that $x_i^c = \mu^{q^{i-1}}x_i$ for $i=1, \ldots, m$, where $\mathbb{F}_{q^m}^{\times} = \la \mu \ra$. Then
\[
\{x_i \wedge x_j \,:\, 1\leqs i < j \leqs m\}
\]
is a basis for $\bar{U}$ and we may assume that $\varphi$ cyclically permutes the $x_i$. In particular, if $u \in U$ and we write $u = \sum_{i,j}b_{ij}(x_i \wedge x_j)$, then the corresponding alternating matrix $B = (b_{ij})$ of size $m$ has the same rank as the matrix $A$ in \eqref{e:u}. Therefore, the rank of $B$ determines the $G$-class of $u$.

Consider the action of $C$ on $U$ and let $W$ be an irreducible submodule of dimension $m$. Then $W$ is absolutely irreducible as a module for $N$ and thus
$\bar{W} = W \otimes \mathbb{F}_{q^m}$ is an irreducible $N$-module. It is straightforward to decompose $\bar{U}$ as a direct sum of irreducible modules for $N$ and we may assume that $W$ has been chosen so that
\[
\bar{W} = \la x_1 \wedge x_{2}, x_2 \wedge x_{3}, \ldots, x_m \wedge x_{1} \ra.
\]
Set
\[
w = x_1 \wedge x_2 + x_2 \wedge x_3 + \cdots + x_{m-1}\wedge x_m + x_m \wedge x_1 \in \bar{W}
\]
and $H = W{:}C = q^m{:}(q^m-1)$.

\vs

\noindent \textbf{Claim.} \emph{We have $G=HK$.}

\vs

To see this, observe that $\varphi \in N$ has a $1$-dimensional fixed space on both $W$ and $\bar{W}$. Clearly, $\la w \ra$ is the fixed point space of $\varphi$ on $\bar{W}$, so $W$ must contain a nonzero scalar multiple of $w$. Without loss of generality, we may assume that $w \in W$, so $W$ is generated by $w$ as a $C$-module and we deduce that 
\[
W = \{\a(x_1 \wedge x_2) + \a^q(x_2 \wedge x_3) + \cdots + \a^{q^{m-2}}(x_{m-1}\wedge x_m) + \a^{q^{m-1}}(x_m \wedge x_1) \,:\, \a \in \mathbb{F}_{q^m}\}.
\]

If $m$ is odd and $z \in W$ is nontrivial, then a straightforward calculation shows that ${\rm rank}(z) = m-1$ and thus $z$ is an involution in $G$ of type $a_{m-1}$. By applying the orbit counting lemma with respect to the action of $H$ on $\Delta = G/K$, noting that we have $|\Delta| = q^{m-1}(q^m-1)$ and ${\rm fix}(h) = 0$ for all $h \in H \setminus W$ (see the proof of \cite[Proposition 5.9]{LX}), we calculate that $H$ has
\[
n = \frac{1}{|H|}\left(|\Delta| + (q^m-1)\cdot {\rm fix}(z)\right)
\]
orbits on $\Delta$. Now $z^G \cap K = z^K$ and we deduce that
\[
{\rm fix}(z) = \frac{|C_G(z)|}{|C_K(z)|} = q^{m-1}(q-1),
\]
which yields $n=1$. Therefore, $G = HK$ as claimed.

Now assume $m$ is even and fix a nontrivial element
\[
z = \a(x_1 \wedge x_2) + \a^q(x_2 \wedge x_3) + \cdots + \a^{q^{m-2}}(x_{m-1}\wedge x_m) + \a^{q^{m-1}}(x_m \wedge x_1) \in W.
\]
Set $\ell = \frac{q^m-1}{q+1}$. A calculation shows that
\[
{\rm rank}(z) = \left\{\begin{array}{ll}
m-2 & \mbox{if $\a^{\ell} =1$} \\
m & \mbox{otherwise.}
\end{array}\right.
\]
Therefore, $W$ contains $\ell$ involutions of type $a_{m-2}$, and the remaining involutions in $W$ are of type $a_m$. Since the latter involutions have no fixed points on $\Delta=G/K$, it follows that $H$ has
\[
n = \frac{1}{|H|}\left(|\Delta| + \ell\cdot {\rm fix}(z)\right)
\]
orbits on $\Delta$, where $z$ is an involution of type $a_{m-2}$. It is straightforward to check that
\[
{\rm fix}(z) = \frac{|C_G(z)|}{|C_K(z)|} = q^{m-1}(q^2-1)
\]
and this gives $n=1$ as required.

\vs

To complete the proof of the proposition, let us assume $q$ is odd. Here we take $K$ to be maximal among subgroups of $G$ with $K \cap G_0 \trianglerighteqslant \O_{2m-1}(q)$, so $|G:K|$ is divisible by $\frac{1}{2}q^{m-1}(q^m-1)$. In particular, if $G=HK$ then $|H|$ is divisible by $\frac{1}{2}q^{m-1}(q^m-1)$ and by considering an element of order $r$, a primitive prime divisor of $q^m-1$, we quickly deduce that $q^m$ divides $|H|$. Therefore, $\frac{1}{2}q^{m}(q^m-1)$ divides $|H|$ as in part (i).

Finally, let us turn to part (iii). Here it will be convenient to work in the matrix group $G = {\rm SO}_{2m}^{+}(q)$ and we will establish the existence of a factorization $G=HK$ with $H = q^m{:}(q^m-1)$. Then by taking quotients, we will obtain the desired factorization of $G/Z(G) = {\rm PSO}_{2m}^{+}(q)$.

As before, let $\{e_1, \ldots, e_m, f_1, \ldots, f_m\}$ be a standard basis for the natural module $V$ and let $P=U{:}L$ be the stabilizer in $G$ of the subspace $\la e_1, \ldots, e_m \ra$, where $U$ is elementary abelian of order $q^{\frac{1}{2}m(m-1)}$ and $L = {\rm GL}_{m}(q)$. Each $u \in U$ can be expressed as a matrix as in \eqref{e:u} and we deduce that $u$ has Jordan form $[J_2^s,J_{1}^{2m-2s}]$ on $V$, where $s = {\rm rank}(A)$. Let $C = \la c \ra < L$ be a Singer cycle and let $W \subseteq U$ be an irreducible summand of dimension $m$ with respect to the action of $C$ on $U$.

Let $H = W{:}C = q^m{:}(q^m-1)$ and let $\Delta = G/K$, where $K = {\rm SO}_{2m-1}(q) \times C_2$ is the stabilizer of a nondegenerate $1$-space. We claim that $G=HK$. As before, we will use the orbit counting lemma to show that $n=1$, where $n$ is the number of orbits of $H$ on $\Delta$. Note that $Z(G) = C_2$ is the kernel of the action, which contains the element of order $2$ in $C$. By the proof of \cite[Proposition 5.9]{LX}, we have $H \cap K = (W \cap K) \times Z(G)$.

First assume $m$ is odd. By repeating the argument in the $q$ even case, we deduce that every nontrivial element in $W$ has Jordan form $[J_{2}^{m-1},J_1^2]$. Now $G$ has a unique conjugacy class of such elements and thus
\[
n = \frac{1}{|H|}\sum_{h \in H}{\rm fix}(h) = \frac{1}{|H|}\left(2|\Delta| + 2(q^m-1)\cdot {\rm fix}(z)\right),
\]
where $z \in W \cap K$ is nontrivial. Since ${\rm SO}_{2m-1}(q)$ has a unique class of unipotent elements with Jordan form $[J_{2}^{m-1},J_1]$, we deduce that $z^G \cap K = z^K$ and so we get
\[
{\rm fix}(z) = |\Delta| \cdot \frac{|z^G \cap K|}{|z^G|} = \frac{|C_G(z)|}{|C_K(z)|}.
\]
By reading off the relevant centralizer orders (see \cite[Section 3.5.3]{BG}), we deduce that
\[
|C_G(z)| = q^{\frac{1}{2}m^2+\frac{1}{2}m-1}|{\rm Sp}_{m-1}(q)||{\rm SO}_{2}^{+}(q)|,\; |C_K(z)| = 2q^{\frac{1}{2}m^2-\frac{1}{2}m}|{\rm Sp}_{m-1}(q)|
\]
and thus
\[
{\rm fix}(z) = \frac{1}{2}q^{m-1}(q-1).
\]
One now checks that this gives $n=1$, as required.

A similar argument applies when $m$ is even. Proceeding as in the $q$ even case, we deduce that $\ell = \frac{q^m-1}{q+1}$ elements in $W$ have Jordan form $[J_2^{m-2},J_1^4]$ on $V$, and the remaining nontrivial elements have Jordan form $[J_2^m]$. Since none of the latter elements fix a nondegenerate $1$-space, it follows that
\[
n = \frac{1}{|H|}\sum_{h \in H}{\rm fix}(h) = \frac{1}{|H|}\left(2|\Delta| + 2\ell \cdot {\rm fix}(z)\right),
\]
where $z \in W \cap K$ is nontrivial. As above, we have $z^G \cap K = z^K$ and we calculate that
\[
{\rm fix}(z) = \frac{|C_G(z)|}{|C_K(z)|} = \frac{1}{2}q^{m-1}(q^2-1).
\]
This gives $n=1$ and the proof of the proposition is complete.
\end{proof}

\vs

This completes the proof of Theorem \ref{t:main}.

\section{Proof of Theorem \ref{t:main2}}\label{s:main2}

In this section we prove Theorem \ref{t:main2}, which gives the minimal order
$\ell(G_0)$ of a solvable factor of any almost simple group $G$ with socle $G_0$.

\begin{prop}\label{p:ell1}
The conclusion to Theorem \ref{t:main2} holds if $G_0$ is non-classical.
\end{prop}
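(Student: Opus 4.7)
The plan is to partition the non-classical finite simple groups $G_0$ with $\ell(G_0) > 0$ into three families---exceptional groups of Lie type, alternating groups, and sporadic groups---and to treat each family separately.

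First, exceptional groups of Lie type are immediate: as noted in the introduction, it follows from \cite{HLS} that no almost simple group with exceptional Lie type socle admits a factorization with a solvable factor, so $\mathcal{S}(G_0) = \emptyset$ in this case and no such $G_0$ contributes to Table~\ref{tab:ell}. This step requires nothing beyond citing the earlier result.

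Next, for alternating socles $G_0 = A_n$ with $n \geqs 5$, I would establish matching upper and lower bounds on $\ell(A_n)$. For the lower bound, if $G = HK$ with socle $A_n$ and $K$ core-free, then $K \cap G_0$ is a proper subgroup of $A_n$. A classical consequence of the simplicity of $A_n$ is that for $n \geqs 5$, every proper subgroup of $A_n$ has index at least $n$; hence $[G:K] \geqs n$ and so $|H| \geqs [G:K] \geqs n$. The factorization $S_n = \langle (1\,2\,\cdots\,n) \rangle \cdot S_{n-1}$ realizes this bound with a solvable factor of order $n$, giving $\ell(A_n) = n$ exactly as in Table~\ref{tab:ell}. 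Here the existence of larger almost simple overgroups of $A_n$ does not lower $\ell(A_n)$ since the same index bound applies verbatim.

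For sporadic socles, the plan is to invoke Giudici's classification of factorizations of almost simple groups with sporadic socle \cite{G}, together with the sporadic solvable-factor cases enumerated in \cite[Table~1.2]{LX}. By inspection these sources show that $\mathcal{S}(G_0)$ is nonempty precisely for the nine sporadic groups listed in part~(a) of the discussion preceding Theorem~\ref{t:main2}, and for each such $G_0$ they yield a finite explicit list of admissible triples $(G, H, K)$ with $H$ solvable. I would then read off the minimum of $|H|$ from this list and confirm that the triple recorded in Table~\ref{tab:ell} achieves this minimum. The principal obstacle is the case-by-case bookkeeping: for each sporadic $G_0$ one must consider all almost simple overgroups (typically $G_0$ and $G_0.2$) together with all candidate solvable factors in order to locate the genuine minimum, but no new conceptual ideas are required beyond direct verification against the published lists.
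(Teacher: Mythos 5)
Your proposal is correct and follows essentially the same route as the paper: exceptional groups are excluded via \cite{HLS}, the alternating case is settled by the minimal-index bound for core-free subgroups together with the factorization $S_n = C_n \cdot S_{n-1}$, and the sporadic case is read off from the published classification (the paper cites \cite[Proposition 4.4]{LX} for this; note that \cite[Table 1.2]{LX}, which you also cite, concerns classical socles rather than sporadic ones, but \cite{G} is the right source and this does not affect the argument).
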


\begin{proof}
As noted in Section \ref{s:intro}, we have $\ell(G_0)=0$ if $G_0$ is an exceptional group of Lie type. If $G_0 = A_n$ then every core-free subgroup of $G$ has index at least $n$, so every solvable factor has order at least $n$. But if $G = S_n$ and $H = \la g \ra$ with $g$ an $n$-cycle, then $G=HK$ with $K = S_{n-1}$, whence $\ell(G_0) = n$. Finally, if $G_0$ is a sporadic group then the relevant factorizations are recorded in \cite[Proposition 4.4]{LX} and we can read off $\ell(G_0)$.
\end{proof}

To complete the proof of Theorem \ref{t:main2}, we need to handle the factorizations $G=HK$ of almost simple classical groups. Recall from Section \ref{s:intro} that such a factorization is of Type I, II or III, according to the following definition:
\begin{itemize}\addtolength{\itemsep}{0.2\baselineskip}
\item[{\rm I.}] $(G_0, H \cap G_0, K\cap G_0)$ belongs to one of 9 infinite families listed in \cite[Table 1.1]{LX};
\item[{\rm II.}] $(G_0, H \cap G_0, K\cap G_0)$ is one of 28 ``sporadic" cases recorded in \cite[Table 1.2]{LX};
\item[{\rm III.}] Both $H$ and $K$ are solvable and $(G,H,K)$ is recorded in \cite[Proposition 4.1]{LX}.
\end{itemize}

Since Theorem \ref{t:main} gives a sharp lower bound on $|H|$ for each Type I factorization, it remains to consider the Type II and III factorizations. For $j = 1, \ldots, 28$, a \emph{Type II.j factorization} satisfies the conditions of Case j in \cite[Table 1.2]{LX}.

\begin{lem}\label{l:ell2}
Let $G$ be an almost simple classical group with socle $G_0$ and suppose $G=HK$ is a Type II.j factorization of $G$. Then there exists an integer $\ell$ and an almost simple group $G_1$ with socle $G_0$ such that
\begin{itemize}\addtolength{\itemsep}{0.2\baselineskip}
\item[{\rm (i)}] $G_1 = H_1K_1$ is a Type II.j factorization of $G_1$; and
\item[{\rm (ii)}] $|H_1| = \ell$ divides $|H|$,
\end{itemize}
where $(G_1,H_1,K_1,\ell)$ is listed in Table $\ref{tab3}$.
\end{lem}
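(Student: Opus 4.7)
The lemma reduces to a finite case-by-case verification over the $28$ sporadic Type II factorizations enumerated in \cite[Table 1.2]{LX}. The plan is as follows. For each index $j \in \{1, \dots, 28\}$, the cited table records the socle $G_0$, a solvable overgroup $A$ of $H \cap G_0$, and the isomorphism type of $K \cap G_0$. I will determine the smallest order $\ell$ of a solvable subgroup $H_1$ that can appear as a factor in a Type II.$j$ factorization $G_1 = H_1 K_1$ of an almost simple group $G_1$ with socle $G_0$, and then exhibit such a triple $(G_1, H_1, K_1)$.

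First, I would establish the lower bound. Since any factorization $G = HK$ forces $|H|$ to be divisible by $|G|/|K|$, and since the index $|G:K|$ equals $|G_0:(K \cap G_0)|$ whenever $K$ is core-free and $K/(K \cap G_0) \cong G/G_0$, one obtains a divisibility condition on $|H|$ independent of the particular overgroup $G$ of $G_0$ chosen. Together with the constraint $H \cap G_0 \leqs A$ from \cite[Table 1.2]{LX}, this pins down $\ell$ up to checking which subgroup of $A$ is minimally sufficient; in each case the resulting candidate is read off from the structure of $A$ and recorded in Table \ref{tab3}.

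Second, for each of the $28$ cases I would exhibit the factorization $G_1 = H_1 K_1$ claimed in Table \ref{tab3}. The socles $G_0$ involved are of small rank over small fields, so most cases can be verified by direct computation in {\sc Magma} \cite{magma}: one constructs $G_1$ together with the recorded subgroups $H_1 \leqs A$ and $K_1$ of the required isomorphism type, and then checks $H_1 K_1 = G_1$ by comparing $|H_1| \cdot |K_1|$ with $|G_1| \cdot |H_1 \cap K_1|$. For the handful of cases with orders too large for direct enumeration, I would follow the template of Section \ref{s:main1}: choose $H_1$ as a semidirect product of an appropriate submodule of the unipotent radical of $A$ with a Singer-type cyclic complement, and apply the orbit counting lemma, using centralizer orders drawn from \cite{BG} to compute the contributions $\mathrm{fix}(z)$ of the nontrivial elements $z \in H_1 \cap K_1$.

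The main obstacle is bookkeeping rather than conceptual difficulty: for several entries the outer automorphism group $G_1/G_0$ must be chosen carefully, as the existence of a factorization $G_1 = H_1 K_1$ with $|H_1| = \ell$ can depend on the presence of specific diagonal, graph, or field automorphisms in $G_1$. In a small number of cases one also expects, as in Remark \ref{r:2}, that the full unipotent radical of $A$ must be contained in $H_1$, so the divisibility bound alone is not sharp and an argument involving the action of a cyclic torus on this radical is needed to conclude. Once each case has been handled individually, the minimal triple $(G_1, H_1, K_1, \ell)$ claimed in Table \ref{tab3} is extracted, completing the proof.
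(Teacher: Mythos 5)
Your proposal matches the paper's proof in substance: since the Type II factorizations form a finite list of $28$ concrete cases over small fields, the paper simply inspects each entry of \cite[Table 1.2]{LX} in turn and uses {\sc Magma} to determine the minimal order of a solvable factor and to verify the factorizations in Table \ref{tab3} (noting only that in Case 20 one must take $G_1 = {\rm PSU}_4(3).2 < {\rm PGU}_4(3)$). The additional machinery you sketch for ``cases too large for direct enumeration'' (Singer-type complements and the orbit counting lemma) is not needed here, as every group in the list is small enough for direct computation.
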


\begin{proof}
We inspect each case in \cite[Table 1.2]{LX} in turn, using {\sc Magma} \cite{magma} to determine the minimal order of a solvable factor and to verify the existence of the factorizations presented in Table \ref{tab3} (note that in Case 20 of Table \ref{tab3}, $G_1 = {\rm PSU}_{4}(3).2$ is a subgroup of ${\rm PGU}_{4}(3)$).
\end{proof}

\begin{table}
\[
\begin{array}{clllc} \hline
\mbox{Case} & G_1 & H_1 & K_1  & \ell \\ \hline
1  & {\rm PSL}_{2}(11) & C_{11} & A_5 & 11 \\
2  & {\rm P\Gamma L}_{2}(16) & D_{34}.4 & 2.S_5 & 136 \\
3  & {\rm PSL}_{2}(19) & 19{:}9 & A_5 & 171\\
4  & {\rm PSL}_{2}(29) & 29{:}7 & A_5 & 203 \\
5  & {\rm PSL}_{2}(59) & 59{:}29 & A_5 & 1711 \\
6 & {\rm PGL}_{4}(3) & 2^4{:}5 & 3^3{:}{\rm GL}_{3}(3) & 80 \\
7  & {\rm PGL}_{4}(3) & (3^3{:}13{:}3).2 & ((4 \times {\rm PSL}_{2}(9)){:}2).2 & 2106 \\
8  & {\rm P \Gamma L}_{4}(4) & (2^6{:}63{:}3).2 & ((5 \times {\rm PSL}_{2}(16)){:}2).2 & 24192 \\
9  & {\rm PSL}_{5}(2) & 31{:}5 & 2^6{:}(S_3 \times {\rm PSL}_{3}(2)) & 155 \\
10  & {\rm PSp}_{4}(3) & 3_{+}^{1+2} & 2^4{:}A_5 & 27 \\
11  & {\rm PSp}_{4}(3) & 3_{+}^{1+2} & 2^4{:}A_5 & 27 \\
12  & {\rm PSp}_{4}(5) & 5_{+}^{1+2}{:}2.A_4 & {\rm PSL}_2(5^2).2 & 3000 \\
13  & {\rm PSp}_{4}(7) & 7_{+}^{1+2}{:}2.S_4 & {\rm PSL}_2(7^2).2 & 16464 \\
14  & {\rm PSp}_{4}(11) & 11_{+}^{1+2}{:}10.A_4 & {\rm PSL}_2(11^2).2 & 159720 \\
15  & {\rm PSp}_{4}(23) & 23_{+}^{1+2}{:}20.S_4 & {\rm PSL}_2(23^2).2 & 6424176 \\
16  & {\rm Sp}_{6}(2) & 3_{+}^{1+2}{:}Q_8 & S_8 & 216 \\
17  & {\rm PGSp}_{6}(3) & (3^{1+4}_{+}{:}2^{1+4}.D_{10}).2 & ({\rm PSL}_{2}(27){:}3).2 & 155520 \\
18  & {\rm PSU}_{3}(3) & 3^{1+2}_{+}{:}8 & {\rm PSL}_{2}(7) & 216 \\
19  & {\rm PSU}_{3}(5) & 5^{1+2}_{+}{:}8 & A_7 & 1000 \\
20  & {\rm PSU}_{4}(3).2 & 3^4{:}2 & {\rm PSL}_{3}(4).2 & 162 \\
21  & {\rm PSU}_{4}(8).3 & (513{:}3).3 & (2^{12}.{\rm SL}_{2}(64).7).3 & 4617 \\
22  & \O_7(3) & 3^3{:}2^4.5 & G_2(3) & 19440 \\
23  & \O_7(3) & 3^{3+3}{:}13 & {\rm Sp}_{6}(2) & 9477 \\
24  & \O_9(3) & 3^{6+4}{:}2^{1+4}.5 & \O_{8}^{-}(3).2 & 9447840 \\
25  & \O_{8}^{+}(2) & 2^2{:}15.4 & {\rm Sp}_{6}(2) & 240 \\
26  & \O_{8}^{+}(2) & 2^6{:}15
& A_9 & 960 \\
27  & {\rm PSO}_{8}^{+}(3) & 3^4.4.D_{10} & {\rm SO}_7(3) & 3240 \\
28  & {\rm P\O}_{8}^{+}(3) & 3^6{:}(3^3{:}13) & \O_8^{+}(2) & 255879 \\ \hline
\end{array}
\]
\caption{Type II factorizations $G_1 = H_1K_1$ with $|H_1|=\ell$ in Lemma  \ref{l:ell2}}
\label{tab3}
\end{table}

Finally, we turn to the Type III factorizations, which are described in \cite[Proposition 4.1]{LX}. The latter result partitions the possible factorizations $G=HK$ into $12$ cases and we will refer to \emph{Type III.j factorizations} accordingly, for $j = 0,1, \ldots, 11$. The Type III.0 factorizations correspond to the infinite family described in \cite[Proposition 4.1(a)]{LX}, which we have already handled in our analysis of Type I.1 factorizations with $n=2$ (see Remark \ref{r:1}(a)). The factorizations of Type III.j with $j=1, \ldots, 11$ coincide with the cases recorded in row j of \cite[Table 4.1]{LX}.

\begin{lem}\label{l:ell3}
Let $G$ be an almost simple classical group with socle $G_0$ and suppose $G=HK$ is a Type III.j factorization of $G$. Then there exists an integer $\ell$ and an almost simple group $G_1$ with socle $G_0$ such that
\begin{itemize}\addtolength{\itemsep}{0.2\baselineskip}
\item[{\rm (i)}] $G_1 = H_1K_1$ is a Type III.j factorization of $G_1$; and
\item[{\rm (ii)}] $|H_1|=\ell$ divides $|H|$,
\end{itemize}
where $(G_1,H_1,K_1,\ell)$ is listed in Table $\ref{tab4}$.
\end{lem}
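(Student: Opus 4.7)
The plan is to mirror the strategy used for Lemma~\ref{l:ell2}, exploiting the fact that the Type~III.j factorizations with $j \geqs 1$ correspond to the finitely many ``sporadic" cases recorded in row~$j$ of \cite[Table 4.1]{LX}, while Type~III.0 (the infinite family from \cite[Proposition 4.1(a)]{LX}) has already been absorbed into the Type~I.1 analysis via the convention in Remark~\ref{r:1}(a). Thus the lemma reduces to a finite verification, involving only a small, explicit list of almost simple classical groups $G$ with both factors solvable.

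The first step is to read off from \cite[Table 4.1]{LX}, for each fixed $j \in \{1,\dots,11\}$, the finite list of candidate triples $(G,H,K)$. For each such case, both $H$ and $K$ are solvable of bounded order, so the possibilities for $H\cap G_0$ (and for the containing almost simple group $G$) are completely determined. In particular, for each $j$ there is only a handful of possible shapes for $H$, and consequently only finitely many candidates for the minimum $|H|$.

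The second step is to pin down, within each row $j$, the triple $(G_1,H_1,K_1)$ of minimal $|H_1|$ and record it as $\ell$ in Table~\ref{tab4}. For this I would use {\sc Magma} \cite{magma}: enumerate the conjugacy classes of solvable subgroups $H \leqs G$ of the shapes allowed by \cite[Table 4.1]{LX}, intersect with the listed core-free subgroups $K$, and test the factorization condition $|H||K| = |G||H\cap K|$. The smallest $|H|$ for which such a $K$ exists yields $\ell$, and simultaneously exhibits the witness factorization $G_1 = H_1 K_1$, establishing both (i) and (ii).

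The final step is bookkeeping: one checks that for every Type~III.j factorization $G=HK$ of an almost simple classical group, the solvable factor $H$ has order divisible by the minimal $\ell$ just computed. This is automatic from the enumeration, since every candidate $H$ in row $j$ of \cite[Table 4.1]{LX} is either $H_1$ itself (up to conjugacy) or contains a subgroup isomorphic to $H_1$ with the same index formula. I do not expect any genuine obstacle here: the cases are finite, the groups are small, and the same computational recipe that handled Lemma~\ref{l:ell2} applies verbatim. The only mild point of care is ensuring that when $H$ and $K$ are both solvable, each candidate factorization is counted under exactly one Type~III.j, which is guaranteed by the partition in \cite[Proposition 4.1]{LX}.
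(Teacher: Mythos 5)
Your proposal is correct and follows essentially the same route as the paper: dispose of Type III.0 by folding it into the Type I.1 analysis (Remark \ref{r:1}(a)), then treat $j=1,\ldots,11$ as a finite check against \cite[Table 4.1]{LX}. The paper simply reads the answer off that table without needing the extra {\sc Magma} enumeration you describe, but that is a difference of effort, not of method.
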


\begin{proof}
As noted above, the Type III.0 factorizations have already been handled in our treatment of Type I.1 factorizations in Theorem \ref{t:main}, so we may assume $j = 1, \ldots, 11$. Here the possibilities for $G$, $H$ and $K$ are listed in \cite[Table 4.1]{LX} and the desired result quickly follows.
\end{proof}

\begin{table}
\[
\begin{array}{clllc} \hline
{\rm Case} & G_1 & H_1 & K_1  & \ell \\ \hline
0 & {\rm PGL}_{2}(q) & C_{q+1} & P_1 & q+1 \\
1 & {\rm PSL}_{2}(7) & C_7 & S_4 & 7 \\
2 & {\rm PSL}_{2}(11) & 11{:}5 & A_4 & 55 \\
3 & {\rm PSL}_{2}(23) & 23{:}11 & S_4 & 253 \\
4 & {\rm PSL}_{3}(3) & C_{13} & 3^2{:}2.S_4 & 13 \\
5 & {\rm PSL}_{3}(3) & 13{:}3 & {\rm A\Gamma L}_{1}(9) & 39 \\
6 & {\rm PSL}_{3}(4).S_3 & 7{:}3.S_3 & 2^4{:}(3 \times D_{10}).2 & 126 \\
7 & {\rm PSL}_{3}(8).3 & 73{:}9 & 2^{3+6}{:}7^2{:}3 & 657 \\
8 & {\rm PSU}_{3}(8).3^2 & 57{:}9 & 2^{3+6}{:}(63{:}3) & 513 \\
9 & {\rm PSU}_{4}(2) & 2^4{:}5 & 3_{+}^{1+2}{:}2.A_4 & 80 \\
10 & {\rm PSU}_{4}(2) & 2^4{:}D_{10} & 3_{+}^{1+2}{:}2.A_4 & 160 \\
11 & {\rm PSU}_{4}(2).2 & 2^4{:}5{:}4 & 3_{+}^{1+2}{:}S_3,\, 3^3{:}S_3 & 320 \\ \hline
\end{array}
\]
\caption{Type III factorizations $G_1 = H_1K_1$ with $|H_1|=\ell$ in Lemma  \ref{l:ell3}}
\label{tab4}
\end{table}

\begin{prop}\label{p:ell2}
The conclusion to Theorem \ref{t:main2} holds if $G_0$ is a classical group.
\end{prop}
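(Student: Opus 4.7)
The plan is to assemble Theorem \ref{t:main} with Lemmas \ref{l:ell2} and \ref{l:ell3}, which together describe the minimal order of a solvable factor within each Type I, II, III factorization class. By the list in (a)--(c) preceding Theorem \ref{t:main2}, a classical socle $G_0$ satisfies $\mathcal{S}(G_0) \ne \emptyset$ only for a specific collection of simple groups, so we may work through these socles one by one. For each such $G_0$, the three tables \ref{tab2}, \ref{tab3}, \ref{tab4} supply all candidate minima $\ell$, and it remains to take the smallest and verify it agrees with the value listed in Table \ref{tab:ell}.

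First I would handle the generic infinite families. For $G_0 = {\rm PSL}_n(q)$ with $n \geqs 3$ (and $(n,q) \ne (4,2)$), the only Type I possibilities are I.1 (always) and I.2 (when $n=4$); I.2 is strictly larger than I.1, and one checks by inspecting Tables \ref{tab3} and \ref{tab4} that for the generic parameters no Type II or III factorization exists, so $\ell(G_0) = (q^n-1)/(q-1)$ from Table \ref{tab2}. An analogous argument applies to ${\rm PSU}_{2m}(q)$ (using I.6), to ${\rm PSp}_{2m}(q)$ with $q$ even and $m \geqs 2$ (using I.3, with I.4 giving the same value when $m=2$), to $\O_{2m+1}(q)$ with $m \geqs 3$ (using I.7), and to ${\rm P\Omega}_{2m}^{+}(q)$ with $m \geqs 4$ (using I.8, and noting I.9 gives the same bound when $m=4$). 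In each case, after checking Tables \ref{tab3}, \ref{tab4} to confirm no sporadic Type II or III case competes for the generic parameter range, the value of $\ell(G_0)$ together with its realising triple $(G,H,K)$ is read directly off Table \ref{tab2}, giving the entries in the top half of Table \ref{tab:ell}.

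Next I would dispense with the case $G_0 = {\rm PSp}_4(q)$ separately. For $q$ even the bound is as above; for $q \geqs 5$ odd only Type I.5 is available (Type II cases 12--15 give strictly larger order, and $G_0 = {\rm PSp}_4(3)$ is the only Type II case with a genuinely small solvable factor). This yields $\ell(G_0) = q^3(q^2-1)$ from Table \ref{tab2}. For ${\rm PSL}_2(q)$, there is no Type I factorization, so $\ell(G_0)$ comes from Table \ref{tab4} (Type III.0), giving $q+1$, with the small exceptions ${\rm PSL}_2(7), {\rm PSL}_2(11)$ listed as Type III.1 and III.2 in Table \ref{tab4}.

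Finally, for the remaining sporadic classical socles, namely those small groups in Remark \ref{r:3}(c) together with ${\rm PSU}_3(3), {\rm PSU}_3(5), {\rm PSU}_3(8), {\rm PSU}_4(3), {\rm PSU}_4(8)$ and ${\rm PSp}_4(3)$, I would enumerate every factorization appearing in Tables \ref{tab2}, \ref{tab3}, \ref{tab4} for that socle and take the numerical minimum, confirming Table \ref{tab:ell} case by case. The main obstacle is precisely this bookkeeping for ${\rm PSU}_4(3)$ and ${\rm PSp}_4(3)$, where several Type II and Type III factorizations coexist with Type I factorizations; for ${\rm PSU}_4(3)$ one compares the Type I.6 bound $\tfrac{q^{2m}(q^{2m}-1)}{q+1} = 720$ with the Type II.20 value $162$, concluding $\ell(G_0) = 162$, realised in ${\rm PSU}_4(3).2 < {\rm PGU}_4(3)$ as in Remark \ref{r:3}(d); similarly for ${\rm PSp}_4(3)$ the Type II values $27$ from cases II.10/II.11 beat the Type I.5 bound, and for ${\rm PSU}_3(n)$ one reads the entry off Table \ref{tab3}. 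In all cases the minimum agrees with Table \ref{tab:ell}, which completes the proof.
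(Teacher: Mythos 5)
Your proposal is correct and follows exactly the paper's route: the paper's entire proof is ``combine Lemmas \ref{l:ell2} and \ref{l:ell3} with Theorem \ref{t:main}'', and you are simply writing out the case-by-case comparison of the minima from Tables \ref{tab2}, \ref{tab3} and \ref{tab4} that this one-liner leaves implicit. Two harmless slips: the Type I.6 bound for ${\rm PSU}_4(3)$ is $\frac{3^4(3^4-1)}{4}=1620$ rather than $720$ (still exceeding $162$), and the Type II.12--15 values equal, rather than strictly exceed, the Type I.5 value $q^3(q^2-1)$, so the stated minimum for ${\rm PSp}_4(q)$ with $q\geqs 5$ odd is unaffected.
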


\begin{proof}
We combine Lemmas \ref{l:ell2} and \ref{l:ell3} with Theorem \ref{t:main}.
\end{proof}

\begin{rem}\label{r:abelian}
There are some natural variations of the parameter $\ell(G_0)$ defined above. For example, let
\[
\mathcal{A}(G_0) = \{ H \,:\, \mbox{$G = HK$, $H$ abelian, $K$ core-free, ${\rm soc}(G) = G_0$}\}
\]
and define $\kappa(G_0) = \min\{ |H| \, :\, H \in \mathcal{A}(G_0)\}$ (set $\kappa(G_0) = 0$ if $\mathcal{A}(G_0)$ is empty). Then one can check that $\mathcal{A}(G_0)$ is nonempty if and only if
\[
G_0 \in \{A_n, {\rm PSL}_{n}(q), {\rm M}_{11}, {\rm M}_{12}, {\rm M}_{23}\}
\]
and we calculate that $\kappa(G_0) = \ell(G_0)$ in each of these cases. We also find that every subgroup in $\mathcal{A}(G_0)$ is cyclic, with the exception of $C_6 \times C_2 \in \mathcal{A}({\rm M}_{12})$.
\end{rem}

\section{Proof of Theorem \ref{t:main3}}\label{s:main3}

In this section we prove Theorem \ref{t:main3} on the exact factorizations of almost simple groups with a solvable factor. We begin with an elementary observation.

\begin{lem}\label{l:easy}
Let $G$ be an almost simple group with socle $G_0$ and suppose $G=HK$ is an exact factorization. Then $|H \cap G_0|$ divides $|G_0: K \cap G_0|$.
\end{lem}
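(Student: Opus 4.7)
The plan is to exploit both pieces of data — the exactness identity $|G| = |H|\cdot|K|$ and the fact that $G_0$ is normal in $G$ — by applying the product formula to the pair of subgroups $H$ and $G_0 K$.

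First I would note that, because $G_0$ is normal in $G$, the set $G_0 K$ is a subgroup of $G$ of order $|G_0K| = |G_0|\cdot|K|/|K \cap G_0|$. Since $H\cdot(G_0K) \supseteq HK = G$, we have $H(G_0K) = G$, and the product formula yields
\[
|G| \;=\; \frac{|H|\cdot|G_0K|}{|H \cap G_0K|} \;=\; \frac{|H|\cdot|G_0|\cdot|K|}{|K \cap G_0|\cdot|H \cap G_0K|}.
\]
Using the exactness identity $|G| = |H|\cdot|K|$, the factors of $|H|$ and $|K|$ cancel, and I am left with
\[
|H \cap G_0K| \;=\; |G_0 : K \cap G_0|.
\]

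To finish, I would observe the obvious inclusion $H \cap G_0 \leqslant H \cap G_0 K$ (since $G_0 \subseteq G_0K$); Lagrange's theorem then forces $|H \cap G_0|$ to divide $|H \cap G_0 K|$, which by the display above equals $|G_0 : K \cap G_0|$. There is no real obstacle here: the lemma reduces to the product formula plus Lagrange's theorem, once one spots that $G_0 K$ is the right auxiliary subgroup. The only point worth flagging is that the normality of $G_0$ in $G$ is genuinely used — to make $G_0 K$ a subgroup in the first place — so the argument relies essentially on the socle being normal and not on any further structural feature of almost simple groups.
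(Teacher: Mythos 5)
Your proof is correct and is essentially the paper's argument in a different dress: the paper counts orders in the quotient $G/G_0$ via the induced factorization $(G_0H/G_0)(G_0K/G_0)=G/G_0$, arriving at $\a|H\cap G_0||K\cap G_0|=|G_0|$, whereas you count inside $G$ itself using the auxiliary subgroup $G_0K$; both boil down to the product formula together with $|G|=|H||K|$ and the normality of $G_0$. Your version has the small bonus of identifying $|G_0:K\cap G_0|$ as the exact order of the subgroup $H\cap G_0K$ of $H$, so the divisibility drops out of Lagrange's theorem.
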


\begin{proof}
We have $G_0H/G_0 \cong H/H \cap G_0$, $G_0K/G_0 \cong K/K \cap G_0$ and $(G_0H/G_0)(G_0K/G_0) = G/G_0$. By taking orders and rearranging, using the fact that $|G|=|H||K|$, we deduce that $\a|H \cap G_0||K \cap G_0| = |G_0|$, where $\a = |(G_0H/G_0) \cap (G_0K/G_0)|$.
\end{proof}

\begin{prop}\label{p:ex1}
The conclusion to Theorem \ref{t:main3} holds if $G_0$ is non-classical.
\end{prop}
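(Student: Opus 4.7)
The plan is to partition the non-classical cases into three families --- exceptional groups of Lie type, sporadic groups, and alternating groups --- and, in each family, to start from the classification of factorizations with a solvable factor already carried out in \cite{LX} and then impose the exactness condition $H \cap K = 1$.

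The exceptional case is immediate: by \cite{HLS}, no almost simple group with socle an exceptional group of Lie type admits a nontrivial factorization with a solvable factor, so there are no candidate triples $(G,H,K)$ to consider and the proposition holds vacuously in this subcase.

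For the sporadic case, I would take the list of all nontrivial factorizations of almost simple groups with sporadic socle from \cite{G} (equivalently, the solvable-factor cases singled out in \cite[Proposition 4.4]{LX}). For each candidate triple, Lemma~\ref{l:easy} eliminates those for which $|H \cap G_0|$ does not divide $|G_0 : K \cap G_0|$, and the surviving candidates are then checked for exactness directly using {\sc Magma} \cite{magma}. This should leave exactly the entries recorded as Cases~36--47 of Table~\ref{tab-sharp}, each of which is covered by part~(v) of Theorem~\ref{t:main3}.

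The substantive work lies in the alternating case $G_0 = A_n$. Here I would combine \cite[Theorem D]{LPS}, the classification of factorizations of $A_n$ and $S_n$, with Kantor's classification \cite{Kantor} of $2$-homogeneous subgroups of $S_n$. Theorem~D tells us that, up to a short list of low-degree exceptions, in any factorization $G = HK$ one factor lies in the natural point stabilizer $S_{n-1}$ (and its complementary factor is transitive on $\{1,\ldots,n\}$), or one factor lies in the $2$-subset stabilizer $S_{n-2} \times S_2$ (and its complement is $2$-homogeneous on $\{1,\ldots,n\}$). In the first configuration, exactness forces the solvable factor $H$ to be regular of order $n$, which is part~(i) of the theorem. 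In the second configuration, the solvable factor $H$ must act sharply $2$-homogeneously on $n$ points; Kantor's theorem then forces $n = p^a$ to be a prime power and $H \leqs {\rm A\Gamma L}_1(p^a)$, which is part~(ii). The remaining finitely many exceptional factorizations in low degree (those arising in parts of \cite[Theorem D]{LPS} involving groups such as $A_6$, $A_8$, $A_{25}$, $A_{32}$, etc.) are checked one by one with {\sc Magma}, producing Cases~1--11 of Table~\ref{tab-sharp} and so feeding into part~(v).

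I expect the hard part to be the alternating case. The difficulty is not any single deep computation but rather the bookkeeping: one must correctly parametrize the over-groups $K$ satisfying $A_{n-2} \normeq K \leqs S_{n-2} \times S_2$, pair each admissible $K$ with its compatible solvable regular or $2$-homogeneous $H$, and keep track of how each factorization of $G_0$ extends (or fails to extend) to the full group $G$ via the diagonal subgroups parametrized by $\mathcal{O}_1$ and $\mathcal{O}_2$ in Table~\ref{tab-sharp}. One also has to be careful about coincidences in very small degree (e.g.\ $A_6 \cong {\rm PSL}_2(9)$ and $A_8 \cong {\rm PSL}_4(2)$), where an apparent alternating-group factorization is better counted under the classical banner, as flagged in Remark~\ref{r:4}(b); these are to be discarded rather than listed again.
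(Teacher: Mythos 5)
Your proposal is correct and follows essentially the same route as the paper: reduce to the classification of solvable-factor factorizations in \cite{LX} (whose alternating-group case is precisely the combination of \cite[Theorem D]{LPS} with Kantor's theorem \cite{Kantor} that you describe), impose exactness, and settle the sporadic and low-degree alternating cases with {\sc Magma}. One small inaccuracy: in the point-stabilizer configuration, exactness does not force $H$ to be regular of order $n$ on $\{1,\ldots,n\}$ --- when $K = A_{n-1}$ inside $G = S_n$ the solvable factor has order $2n$ and is merely transitive (e.g. $S_4 = D_8A_3$) --- but this does not affect the argument, since such factorizations still land in part (i) of Theorem \ref{t:main3}.
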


\begin{proof}
Here $G_0$ is either a sporadic or alternating group. First assume $G_0$ is sporadic  and $G=HK$ is a factorization. The  possibilities for $G$, $H$ and $K$ are given in \cite[Propositions 4.1 and 4.4]{LX} and it is routine to read off the cases with $|G|=|H||K|$. Similarly, if $G_0 = A_n$ then we apply \cite[Proposition 4.3]{LX}. Here the cases arising in parts (a) and (b) of this proposition are recorded in parts (i) and (ii) of Theorem \ref{t:main3}, and we use {\sc Magma} \cite{magma} to determine the exact factorizations corresponding to the cases that arise in parts (c)-(f) of \cite[Proposition 4.3]{LX}. Note that we can also use {\sc Magma} to read off the exact factorizations coming from the cases recorded in the table in
\cite[Proposition 4.3(b)]{LX}, which allows us to include the condition $H \leqs {\rm A\Gamma L}_{1}(p^a)$ in part (ii) of the theorem.
\end{proof}

\begin{prop}\label{p:ex2}
The conclusion to Theorem \ref{t:main3} holds if $G_0$ is a classical group.
\end{prop}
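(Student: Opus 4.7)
The plan is to follow the same trichotomy used throughout the paper: by \cite[Theorem 1.1]{LX}, any factorization $G = HK$ of an almost simple classical group with $H$ solvable is of Type I, II, or III, and I would handle each family in turn, imposing the exactness condition $|H||K| = |G|$ (equivalently $H \cap K = 1$) at each stage. The two key tools are Lemma \ref{l:easy} (which gives $|H \cap G_0| \mid |G_0 : K \cap G_0|$) and Theorem \ref{t:main} (which provides a sharp lower bound $\ell$ on $|H|$ in each Type I family). Combined, these quickly force $|H \cap G_0|$ to sit in a narrow arithmetic window and let me read off the structure of $H$ and $K$.

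For Type I factorizations I would proceed case by case through Table \ref{tab}. Case I.1 requires no argument: the containment conditions $H \cap G_0 \leqs \tfrac{q^n-1}{(q-1)d}{:}n$ and $q^{n-1}{:}{\rm SL}_{n-1}(q) \normeq K \cap G_0$ are exactly those stated in conclusion (iii). Case I.3 yields conclusion (iv): Lemma \ref{l:easy} with $K \cap G_0 \normeq \O_{2m}^-(q)$ gives $|H \cap G_0| \mid q^m(q^m-1)$, while Theorem \ref{t:main}(i) gives $q^m(q^m-1) \mid |H|$; matching these against the structure of $A = q^{m(m+1)/2}{:}(q^m-1).m$ collapses $H \cap G_0$ into a subgroup of $q^m{:}(q^m-1).m$, and the infinite family of examples from Table \ref{t:exact} comes from Theorem \ref{t:main}(ii) after checking (via the proof of Proposition \ref{p:case3}) that the constructed $H = q^m{:}(q^m-1)$ meets $\O_{2m}^-(q)$ trivially precisely when $m \geqs 3$ is odd. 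For the remaining Cases I.2 and I.4--I.9, comparing $\ell$ in Table \ref{tab2} with $|G_0 : B|$ shows that an exact factorization can hold only for small parameters, leaving finitely many candidates which I would verify directly (including the ${\rm PGL}_4(3)$, ${\rm P\Gamma L}_4(4)$, ${\rm PGSp}_4(3)$ and ${\rm PSU}_4(3)$ entries in Table \ref{tab-sharp}).

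For Type II factorizations there are only 28 rows in \cite[Table 1.2]{LX}, and I would check each individually using \textsc{Magma} to identify which admit an exact factorization; these contribute the sporadic entries in rows 15--19, 23--30, 32, 34--35 of Table \ref{tab-sharp}. For Type III, both $H$ and $K$ are solvable and the classification is \cite[Proposition 4.1]{LX} together with \cite[Table 4.1]{LX}: the infinite family with $G_0 = {\rm PSL}_2(q)$ is handled by direct arithmetic on the orders of Borel, dihedral, and $A_4$/$S_4$/$A_5$-type subgroups, giving rows 12--16 of Table \ref{tab-sharp}, and the 11 sporadic cases are verified with \textsc{Magma}.

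The main obstacle will be the infinite families in Type I.4--I.9: here the outer automorphism group of $G_0$, together with the action of a Singer-type cycle on the unipotent radical, has to be tracked carefully to rule out exact factorizations for generic parameters and to identify the precise small-parameter exceptions. A secondary subtlety in Case I.3 is the parity condition on $m$ and the role of the outer $C_2$ quotient ${\rm O}_{2m}^-(q)/\O_{2m}^-(q)$ in determining when $H$ and $K$ actually intersect trivially. Once these are handled, assembling the results from Types I, II, III and checking against Table \ref{tab-sharp} for completeness finishes the proof.
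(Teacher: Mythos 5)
Your overall strategy --- the Type I/II/III trichotomy, Lemma \ref{l:easy} combined with the lower bounds from Theorem \ref{t:main}, and \textsc{Magma} for the sporadic Type II/III cases --- is the same as the paper's, and it works for most of the Type I families: in Cases 2, 5, 6, 7, 8, 9 the $q$-part of $|H\cap G_0|$ (which contains a large piece of the unipotent radical by the proofs in Section \ref{s:main1}) strictly exceeds the $q$-part of $|G_0:K\cap G_0|$, so Lemma \ref{l:easy} kills these cases outright for \emph{every} $q$. This exposes a misattribution in your plan: Cases I.2 and I.4--I.9 contribute \emph{nothing} to Table \ref{tab-sharp}; the ${\rm PGL}_4(3)$, ${\rm P\Gamma L}_4(4)$, ${\rm PSU}_4(3)$ and ${\rm PGSp}_4(3)$ entries you cite all arise from Type II and Type III factorizations, not from small-parameter exceptions in the Type I families.

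The genuine gap is Case I.4 (equivalently, Case I.3 with $m=2$ after applying a graph automorphism). There your arithmetic test fails to decide anything: with $G_0={\rm Sp}_4(q)$ and $B={\rm Sp}_2(q^2)$ one has $|G_0:B|=q^2(q^2-1)=\ell$ exactly, so the comparison of $\ell$ with $|G_0:B|$ is consistent with an exact factorization for every even $q$, and since this is an infinite family you cannot ``verify directly''. The paper rules it out by a finer argument: the candidate $H=X{:}C$ has $H\cap K=X\cap K$ where $X=\{1\}\cup z^C$ and $z$ is an involution of type $c_2$ in the Aschbacher--Seitz classification; since $z^G\cap K\neq\emptyset$ one gets ${\rm fix}(z)\geqs 1$ on $G/K$, so $H$ is not regular and no exact factorization exists. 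The same involution-type analysis is what delivers the positive half of Case I.3: for $m$ odd every nontrivial element of $X$ is a $b_m$-involution, and $\O_{2m}^{-}(q)$ contains no $b$-type involutions, whence $X\cap K=1$ and $G=HK$ is exact. You correctly flag the parity issue in Case I.3, but your proposal gives no mechanism that would detect the failure at $m=2$ or produce the $b_m$/$c_m$ dichotomy; this is the one place where the proof genuinely needs more than order arithmetic.
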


\begin{proof}
We will consider the Type I, II and III factorizations in turn, as defined in Section \ref{s:intro}. First assume $G=HK$ is a factorization of Type I, so the possibilities for $G$, $H$ and $K$ are described  in Table \ref{tab}. It is easy to see that there are exact factorizations in Case 1 of Table \ref{tab}; for example, we can take $G = {\rm PGL}_{n}(q)$, $H$ a Singer cycle of order $\frac{q^n-1}{q-1}$ and $K=P_1$, noting that $H$ acts transitively on the set of $1$-dimensional subspaces of the natural module for $G$. These cases are recorded in part (iii) of Theorem \ref{t:main3}.

Next let us consider the Type I factorizations in Cases 3 and 4 of Table \ref{tab}. Here $G_0 = {\rm Sp}_{2m}(q)$ with $q$ even and $m \geqs 2$. In Case 3 we have $\O_{2m}^{-}(q) \normeq K \cap G_0$, so $K \cap G_0 = \O_{2m}^{-}(q)$ or ${\rm O}_{2m}^{-}(q)$. By the proof of Proposition \ref{p:case3} we see that $q^m$ divides $|H \cap G_0|$ and by applying Lemma \ref{l:easy} we deduce that $G=HK$ is an exact factorization only if $K \cap G_0 = \O_{2m}^{-}(q)$.

First we claim that if  $m$ is odd then $G = {\rm Sp}_{2m}(q)$ admits an exact factorization $G=HK$ with $H = q^m{:}(q^m-1)$ and $K = \O_{2m}^{-}(q)$. To see this, we construct $H$ as in the proof of Proposition \ref{p:case3} and we note that $H \cap K = X \cap K$, where $X$ is an elementary abelian normal subgroup of $H$ of order $q^m$ with the property that every involution in $X$ is of type $b_m$ (in the notation of \cite{AS}). Since there are no $b$-type involutions in $K$, it follows that $X \cap K=1$ and thus $G=HK$ as claimed.

Next we claim that there are no exact factorizations in Case 3 when $m=2$, which also rules out an exact factorization in Case 4. To see this, we may assume $G=G_0$, $K = \O_{4}^{-}(q)$ and $H \leqs q^3{:}(q^2-1).2<P_2$ has order $q^2(q^2-1)$. Here $H = X{:}C$ is the only possibility, where $X=q^2$ is an irreducible module for a Singer cycle $C<{\rm GL}_{2}(q)$, and by arguing as in the proof of Proposition \ref{p:case3} we deduce that $X = \{1\} \cup z^C$ and $z$ is a $c_2$-type involution in $G$. Moreover, ${\rm fix}(z)\geqs 1$ with respect to the action of $z$ on $\Delta = G/K$ (since $z^G \cap K$ is nonempty), so $H$ does not act regularly on $\Delta$ and we conclude that $G \ne HK$.

It is straightforward to show that none of the other cases in Table \ref{tab} yield an exact factorization. For example, consider Case 2. Here the proof of Proposition \ref{p:case2} implies that $q^3$ divides $|H \cap G_0|$, but we have ${\rm PSp}_{4}(q) \normeq K \cap G_0$ and thus $q^3$ does not divide $|G_0:K \cap G_0|$. Therefore, the existence of an exact factorization in this case is ruled out by Lemma \ref{l:easy}. Similar reasoning applies in Cases 5--9.

Finally, let us turn to the factorizations of Type II and III. For Type III factorizations, the precise possibilities for $G$, $H$ and $K$ are listed in \cite[Table 4.1]{LX} and it is easy to read off the relevant cases. Similarly, the Type II factorizations are presented in \cite[Table 1.2]{LX} and with the aid of {\sc Magma} \cite{magma} and Lemma \ref{l:easy}, it is straightforward to identify the cases that yield an exact factorization.
\end{proof}

\section{Nilpotent regular subgroups}\label{s:main4}

In this final section, we prove Theorem \ref{nil-B-gps}, which describes the primitive permutation groups with a nilpotent regular subgroup. We will also prove Theorem \ref{t:5}. 

Let $G$ be a finite primitive permutation group which contains a nilpotent regular subgroup $H$. Let $T$ denote the socle of $G$. Recall that two groups are \emph{class-mates} if they are isomorphic to regular subgroups of the same primitive permutation group, which is not a symmetric or alternating group in its natural action.

If $G$ is an affine group, then $T$ is an elementary abelian $p$-group for a prime $p$. Moreover, $T$ is regular and thus $H$ is a class-mate of an elementary abelian $p$-group as in part (ii) of Theorem \ref{nil-B-gps}. For the remainder, let us assume $G$ is non-affine, so $T$ is nonabelian. Since $H$ is nilpotent, $H$ does not contain a nontrivial normal subgroup of $T$ and thus the possibilities for $G$ and $H$ are described in \cite[Theorem 1]{LPS-3}. Three cases arise:

\begin{itemize}\addtolength{\itemsep}{0.2\baselineskip}
\item[{\rm (a)}] $G$ is almost simple;
\item[{\rm (b)}] $G$ is a product-type group on $\Delta^d$ of the form $G \leqs G_1 \wr S_d$, where $d \geqs 2$ and $G_1 \leqs {\rm Sym}(\Delta)$ is almost simple and primitive;
\item[{\rm (c)}] $G$ is of diagonal-type (corresponding to cases (ii) and (iii) in \cite[Theorem 1]{LPS-3}).
\end{itemize}

First we handle the diagonal-type groups in case (c). For the proof of the following proposition, we need to recall the well-known Wielandt-Kegel theorem \cite{Kegel,Wielandt-1958}: any finite group which is a product of two nilpotent groups is solvable.

\begin{prop}\label{diag-case}
Let $G$ be a finite primitive permutation group with a nilpotent regular subgroup $H$. Then $G$ is not of diagonal-type.
\end{prop}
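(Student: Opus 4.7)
The plan is to derive a contradiction from the Wielandt--Kegel theorem by exhibiting a factorization of the nonabelian simple socle factor as a product of two nilpotent subgroups. Suppose for contradiction that $G$ is primitive of diagonal type with socle $T = T_0^k$ (where $T_0$ is a nonabelian simple group and $k \geq 2$) and a nilpotent regular subgroup $H$. Let $K$ be the point stabilizer. By the standard structure of diagonal-type groups, $K \cap T$ equals a diagonal subgroup $D \cong T_0$ and $G = TK$; combined with the exact factorization $G = HK$, $H \cap K = 1$, this identifies $\Omega = G/K$ with $T/D$ as $H$-sets.

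First I would reduce to the case $H \leqslant T$. The image $HT/T \cong H/(H \cap T)$ is a nilpotent subgroup of $G/T$, which embeds in $\mathrm{Out}(T_0) \wr S_k$ with the projection to $S_k$ being transitive (since $G$ is of diagonal type). Combining the nilpotency of this projection with the transitivity constraint should force sharp restrictions on the outer contribution of $H$, and in the cleanest situation allows us to assume $H \leqslant T$, so that $|H| = |T_0|^{k-1}$.

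Assuming $H \leqslant T$, consider the first two coordinate projections $\pi_1, \pi_2 : T \to T_0$ and set $A = \pi_1(H)$, $B = \pi_2(H)$; both are nilpotent as epimorphic images of $H$. The set map $\tau : T/D \to T_0$ sending $(t_1, \ldots, t_k) D \mapsto t_1 t_2^{-1}$ is well-defined ($D$ acts by simultaneous right multiplication) and surjective. Regularity of $H$ on $T/D$ implies that the composition $H \hookrightarrow T/D \xrightarrow{\tau} T_0$ is surjective, so every $x \in T_0$ equals $h_1 h_2^{-1}$ for some $(h_1, \ldots, h_k) \in H$. Hence $T_0 \subseteq AB^{-1} = AB$, and so $T_0 = AB$ with both factors nilpotent. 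By the Wielandt--Kegel theorem $T_0$ is then solvable, contradicting the fact that $T_0$ is nonabelian simple.

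The main obstacle will be justifying the reduction to $H \leqslant T$: if $HT/T$ is nontrivial, the above projection argument only produces a subset of $T_0$ of size at least $|T_0|/[HT:T]$ inside $AB$, which need not force $T_0 = AB$. Overcoming this likely requires more delicate use of the nilpotency of $H$ together with the wreath product structure of $G/T$, either by absorbing the outer contribution into an enlarged nilpotent factor on one of the two sides, or by combining the projections to several pairs of coordinates and summing the resulting coverings.
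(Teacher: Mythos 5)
Your endgame coincides with the paper's: manufacture a factorization of the simple factor into two nilpotent subgroups and invoke the Wielandt--Kegel theorem. Moreover, the part of your argument that is actually carried out is correct: if $H\leqslant T$, then regularity gives $T=DH$ with $D\cap H=1$, the map $\tau\colon tD\mapsto t_1t_2^{-1}$ is well defined with fibres of size $|T_0|^{k-2}$ and is surjective, and hence $T_0=\pi_1(H)\pi_2(H)$ is a product of two nilpotent groups, contradicting Wielandt--Kegel. This is a pleasant self-contained alternative to the paper's route, which instead reads off from \cite[Theorem 1(ii)]{LPS-3} that $k=2$ and $H=A\times B\leqslant T$ with $T_0=AB$ before applying Wielandt--Kegel; your version works uniformly in $k$ and avoids that citation.

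Nevertheless there is a genuine gap, which you flag yourself: the reduction to $H\leqslant T$ is never carried out. When $H\not\leqslant T$, the subgroup $H\cap T$ is only semiregular, a single $(H\cap T)$-orbit maps under $\tau$ onto a subset of $T_0$ of size roughly $|T_0|/[H:H\cap T]$, and nothing in the sketch forces this subset to cover $T_0$; the suggestion of ``absorbing the outer contribution'' or ``summing coverings over several pairs of coordinates'' is not an argument. Note that $[H:H\cap T]$ need not be small a priori (a nilpotent subgroup of $S_k$ can have order $2^{k-1}$), so the case cannot be dismissed by a crude order count. The paper sidesteps this entirely by quoting \cite[Theorem 1(ii)]{LPS-3}, which already pins down the transitive subgroups of simple diagonal groups not containing a simple factor of the socle; you would need either to cite that result or to supply a genuine argument here. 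A second, smaller omission: your setup with $K\cap T=D\cong T_0$ only covers the \emph{simple} diagonal case, whereas ``diagonal-type'' in the sense of the paper also includes the compound diagonal groups of \cite[Theorem 1(iii)]{LPS-3}, where $K\cap T$ is a direct product of several diagonal subgroups. The paper disposes of these by noting that a compound diagonal group is a blow-up of a simple diagonal one, so the existence of a nilpotent regular subgroup descends to the simple diagonal case already excluded; this short extra step is missing from your proposal.
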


\begin{proof}
Seeking a contradiction, first assume that $G$ is `simple diagonal' as in Theorem 1(ii) of \cite{LPS-3}.
Since $H$ is nilpotent, it follows that $T=S^2$ for some nonabelian simple group $S$ and there are proper subgroups $A,B < S$ such that $S=AB$ and $H=A\times B$. But $A$ and $B$ are nilpotent, so the factorization $S=AB$ contradicts the aformentioned Wielandt-Kegel theorem.

Finally, suppose $G$ is `compound diagonal' as in \cite[Theorem 1(iii)]{LPS-3}.
Then $G$ is a blow-up of a simple diagonal primitive group which has a nilpotent regular subgroup. But we have just ruled out the existence of such a simple diagonal group, so this case can also be eliminated.
\end{proof}

To handle cases (a) and (b) above, we need the following simple lemma. Recall that the semi-dihedral and generalized quaternion groups of order $2^{e+1}$ are defined as follows:
\begin{align*}
{\rm SD}_{2^{e+1}} & = \la x,y\mid x^{2^e}=y^2=1, \, x^y=x^{2^{e-1}-1}\ra \\
Q_{2^{e+1}} & =\la x,y\mid x^{2^e}=y^4=1, \, x^y=x^{-1}, \, x^{2^{e-1}}=y^2\ra
\end{align*}

\begin{lem}\label{metac-nil}
Let $A$ be a nilpotent subgroup of ${\rm GL}_{1}(q^m){:}m \leqs \GammaL_1(q^m)$ with  $m \geqs 2$. Then $A$ is transitive on the set of nonzero vectors of $\mathbb{F}_q^m$
if and only if
\begin{itemize}\addtolength{\itemsep}{0.2\baselineskip}
\item[{\rm (i)}] $A = {\rm GL}_{1}(q^m) = C_{q^m-1}$; or 
\item[{\rm (ii)}] $m=2$, $q$ is a Mersenne prime and $A=C_{\frac{1}{2}(q-1)}\times B$ with $B = {\rm SD}_{4(q+1)}$ or $Q_{2(q+1)}$.
\end{itemize}
\end{lem}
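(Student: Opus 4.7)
\textbf{Proof plan for Lemma \ref{metac-nil}.}

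The ``if'' direction is a direct verification. In case (i), the Singer cycle $C_{q^m-1}=K^*$ (where $K=\mathbb{F}_{q^m}$) acts regularly on $V\setminus\{0\}\cong K^*$. In case (ii), since $q=2^s-1$ is a Mersenne prime, $(q-1)/2$ is odd, so $A=C_{(q-1)/2}\times B$ is a direct product of an odd-order cyclic group and a $2$-group, hence nilpotent; then one checks that $Q_{2(q+1)}$ sits inside the Sylow $2$-subgroup of $\Gamma L_1(q^2)$ so as to give a regular action of $A$ of order $q^2-1$, while $A=C_{(q-1)/2}\times SD_{4(q+1)}=\Gamma L_1(q^2)$ itself is transitive of order $2(q^2-1)$.

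For the ``only if'' direction, write $\Gamma L_1(q^m)=K^*\rtimes\langle\sigma\rangle$ with $\sigma$ the Frobenius $x\mapsto x^q$, and set $A_0=A\cap K^*$, $k=[A:A_0]\mid m$. Transitivity forces $(q^m-1)\mid |A_0|k$, so for every prime $r\mid q^m-1$ with $r\nmid m$, the Sylow $r$-subgroup of $A_0$ is full, cyclic of order $r^{a_r}$, and coincides with the Sylow $r$-subgroup $R$ of $A$. Nilpotency of $A$ gives $A=R\times H$, where the Hall $r'$-subgroup $H$ centralizes $R$ and surjects onto $A/A_0\cong C_k$. Choosing a lift $\tau'\in H$ of a generator of $A/A_0$, conjugation by $\tau'$ acts on $R\leqs K^*$ as $x\mapsto x^{q^{m/k}}$; centralization then forces $q^{m/k}\equiv 1\pmod{r^{a_r}}$. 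If $r$ is a primitive prime divisor of $q^m-1$, then $\mathrm{ord}_r(q)=m$, so $m\mid m/k$, forcing $k=1$ and hence $A=A_0=K^*$, giving case (i). By Zsigmondy, a primitive prime divisor exists unless $(q,m)=(2,6)$ or $m=2$ with $q+1$ a $2$-power. The case $(q,m)=(2,6)$ is handled analogously by combining the constraints $k\mid 2$ (from $r=7$, which has order $3$ mod $7$) and $k\mid 3$ (from the Sylow $3$-analysis), yielding $k=1$ and $A=C_{63}$.

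The remaining case $m=2$, $q=2^s-1$ Mersenne, is where the real work lies. Here the primitive-prime argument only yields $k\mid 2$. If $k=1$ we recover (i); if $k=2$, then since $(q-1)/2$ is odd and the action of $\sigma$ on elements of order dividing $q-1$ is trivial (as $x^q=x$), the Hall $2'$-subgroup of $A$ lies in $C_{(q-1)/2}\leqs K^*$ and is centralized by every $2$-element of $A$. Thus $A=A_{2'}\times A_2$ with $A_{2'}\leqs C_{(q-1)/2}$ and $A_2$ contained in a Sylow $2$-subgroup of $\Gamma L_1(q^2)$, which is $C_{2^{s+1}}\rtimes C_2\cong SD_{2^{s+2}}=SD_{4(q+1)}$ (one checks that the action of $\sigma$ on the Sylow $2$ of $K^*$ is the semidihedral twist $x\mapsto x^{2^s-1}$). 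Transitivity forces $|A|\in\{q^2-1,2(q^2-1)\}$, so $A_{2'}=C_{(q-1)/2}$ and $A_2$ has order $2(q+1)$ or $4(q+1)$, i.e., $A_2$ is either $SD_{4(q+1)}$ itself or one of its three index-two subgroups $C_{2(q+1)}$, $D_{2(q+1)}$, $Q_{2(q+1)}$. The stabilizer of $1\in K^*$ is $\langle\sigma\rangle$, and $\sigma$ is the ``outer'' involution $b$ in the standard semi-dihedral presentation; a direct check shows $\sigma\in D_{2(q+1)}$ but $\sigma\notin C_{2(q+1)}$ and $\sigma\notin Q_{2(q+1)}$. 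Consequently $A_2=C_{2(q+1)}$ gives $A=K^*$ (case (i) again), $A_2=D_{2(q+1)}$ is excluded as the stabilizer becomes nontrivial so the orbit has size $\leqs(q^2-1)/2$, and the surviving possibilities $A_2=Q_{2(q+1)}$ and $A_2=SD_{4(q+1)}$ yield precisely case (ii). The main obstacle is this final enumeration: one must correctly identify the three index-two subgroups of $SD_{4(q+1)}$ and determine which of them contain the distinguished Frobenius involution $\sigma$, since it is exactly this containment that separates the transitive examples from the non-transitive ones.
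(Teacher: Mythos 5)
Your proposal is correct and follows essentially the same route as the paper: the generic case is settled by a primitive prime divisor of $q^m-1$ together with nilpotency forcing the relevant Sylow subgroup to be central (hence $A\leqs{\rm GL}_1(q^m)$), and the exceptional Mersenne case is handled by identifying ${\rm \Gamma L}_1(q^2)=C_{\frac{1}{2}(q-1)}\times{\rm SD}_{4(q+1)}$ and examining its subgroups of index at most $2$. You are in fact a little more explicit than the paper in enumerating the three index-two subgroups of the semidihedral Sylow $2$-subgroup and ruling out the dihedral one via the point stabilizer $\langle\sigma\rangle$ (the paper leaves this step implicit and certifies regularity of the quaternion case via its unique involution), but the substance of the argument is the same.
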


\begin{proof}
Let $\O$ be the set of nonzero vectors of $\mathbb{F}_q^m$.
Clearly, if (i) holds then $A$ is transitive on $\O$. Now assume (ii) holds. If $B = {\rm SD}_{4(q+1)}$ then $A = \GammaL_1(q^2)$ is transitive. If $B = Q_{2(q+1)}$ then both $A$ and $B$ have a unique involution, which lies in ${\rm GL}_{1}(q^2)$ and fixes no nonzero vector of $\mathbb{F}_q^2$. It follows that $A$ acts regularly on $\O$. 

For the remainder, we may assume that $A$ acts transitively on $\O$. In particular, $|A|$ is divisible by $q^m-1$. 

First assume that $q^m-1$ has a primitive prime divisor $r$ and let $a\in A$ be of order $r$. Let $R$ be the unique Sylow $r$-subgroup of $A$ and note that $R$ is cyclic since $r>m$. Since $A$ is nilpotent, we have $A=R\times S$, where $S$ is a Hall $r'$-subgroup of $A$, and thus $a \in Z(A)$.  Therefore,
\[
A = C_A(a) \leqs C_{{\rm GL}_{1}(q^m){:}m}(a) = {\rm GL}_{1}(q^m)
\]
and thus (i) holds.

To complete the proof, let us now assume that $q^m-1$ does not have a primitive prime divisor.
Then either $q^m=2^6$, or $m=2$ and $q$ is a Mersenne prime. In the former case, it is straightforward to check that (i) holds, so assume $m=2$ and $q$ is a Mersenne prime. Write ${\rm GL}_{1}(q^2).2 = \GammaL_1(q^2) = \la x\ra{:}\la y\ra = C_{q^2-1}{:}C_2$ with $x^y=x^q$.
Since $q+1$ is a power of 2, we have $q^2-1=2(q+1)\cdot \frac{1}{2}(q-1)$ and $\frac{1}{2}(q-1)$ is odd. Let $x_1=x^{2(q+1)}$ and $x_2=x^{\frac{1}{2}(q-1)}$.
Then $|x_1|=\frac{1}{2}(q-1)$, $|x_2|=2(q+1)$ and $\la x\ra=\la x_1\ra\times\la x_2\ra$.
Now $x_1^y=x_1^q=x_1$ and $x_2^y=x_2^q$, so
\[
\GammaL_1(q^2)=\la x_1,x_2,y\ra=\la x_1\ra\times\la x_2,y\ra=C_{\frac{1}{2}(q-1)}\times {\rm SD}_{4(q+1)}
\]
is nilpotent. Finally, set $z=x_2y$ and note that $z^2=x_2yx_2y=x_2^{q+1}$. Since $|x_2|=2(q+1)$, it follows that $z$ has order $4$ and we deduce that
\[
\la x_1\ra\times\la x_2^2, x_2y\ra=C_{\frac{1}{2}(q-1)}\times Q_{2(q+1)}.
\]
This completes the proof.
\end{proof}

In order to determine the almost simple candidates from Theorem~\ref{t:main3}, we need the following projective version of Lemma~\ref{metac-nil}. In the statement, we write $\frac{q^m-1}{q-1}{:}mf$ for the image of ${\rm GL}_{1}(q^m){:}mf < {\rm \Gamma L}_{m}(q)$ modulo scalars, where $q=p^f$ with $p$ prime.

\begin{lem}\label{metac-nil'}
Let $A$ be a nilpotent subgroup of $\frac{q^m-1}{q-1}{:}mf$ with $m \geqs 2$. Then $A$ is transitive on the set of $1$-dimensional subspaces of $\mathbb{F}_q^m$ if and only if one of the following holds:
\begin{itemize}\addtolength{\itemsep}{0.2\baselineskip}
\item[{\rm (i)}] $A$ is a cyclic group of order $\frac{q^m-1}{q-1}$.
\item[{\rm (ii)}] $m=2$, $q$ is a Mersenne prime and $A=D_{2(q+1)}$ or $D_{q+1}$.
\item[{\rm (iii)}] $m=2$, $q=8$ and $A = 3_{-}^{1+2}$.
\end{itemize}
In (ii), we note that $\frac{q^m-1}{q-1}{:}mf = D_{2(q+1)}$ has two subgroups isomorphic to $D_{q+1}$; one is transitive, the other is intransitive.
\end{lem}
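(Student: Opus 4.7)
The ``if'' direction is by direct verification. In (i) the normal Singer cycle $N := C_S$, with $S := (q^m-1)/(q-1)$, acts regularly on the $S$ one-dimensional subspaces. In (ii) the Mersenne condition forces $q+1$ to be a power of $2$ and $f=1$, so $M = D_{2(q+1)}$ is a $2$-group; a short check (enumerating the three index-two subgroups and their orbits on the $q+1$ lines) shows that both $D_{q+1}$ (with the correct embedding) and $D_{2(q+1)} = M$ itself are transitive and nilpotent, while the other $D_{q+1}$ subgroup is intransitive. In (iii) one verifies that $A = 3_{-}^{1+2}$ is the Sylow $3$-subgroup of $M = 9{:}6$, contains $N = C_9$, and is hence transitive.

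For the converse, let $A \leqs M$ be nilpotent and transitive, so $S \mid |A|$ and $|A| \mid S \cdot mf$. My plan is to apply Zsigmondy's theorem to $p^{mf}-1$ rather than merely to $q^m-1$: suppose there exists a primitive prime divisor $r$ of $p^{mf}-1$, i.e.\ $\mathrm{ord}_r(p) = mf$. Then $mf \mid r-1$, so $r > mf$; in particular $r \nmid mf$, so the Sylow $r$-subgroup of $M$ lies inside $N$ and is cyclic. Nilpotency of $A$ then places $R := \mathrm{Syl}_r(A)$ in $Z(A)$, and for $y \in R$ of order $r$ we get $A \leqs C_M(y)$. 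To compute this centralizer I would lift $y$ to $\tilde y \in \tilde M := \GammaL_1(q^m)$ of the same order $r$ (possible because $r \nmid q-1$); then for $\xi \in \mathbb{F}_{q^m}^{\times}$ and $\phi$ the Frobenius of $\mathbb{F}_{q^m}/\mathbb{F}_p$, conjugation of $\tilde y$ by $\xi\phi^i$ sends it to $\tilde y^{p^i}$, which equals $\tilde y$ iff $mf \mid i$. Hence $C_{\tilde M}(\tilde y) = \mathbb{F}_{q^m}^{\times}$, so $C_M(y) = N$, which forces $A = N$ and puts us in case (i).

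It remains to treat the Zsigmondy exceptions for $p^{mf}-1$, which are $(p,mf) = (2,6)$ --- giving $(m,q) \in \{(2,8),(3,4),(6,2)\}$ --- and $mf = 2$ with $p$ Mersenne, i.e.\ $m=2$ and $q=p$ is a Mersenne prime. In the Mersenne case $M = D_{2(q+1)}$ is a $2$-group, and an explicit enumeration of its subgroups of order divisible by $q+1$ yields exactly the three possibilities $C_{q+1}$, $D_{q+1}$ (transitive embedding), $D_{2(q+1)}$, matching (i) and (ii). In each of the three small cases $(m,q) \in \{(2,8),(3,4),(6,2)\}$ the group $M = S{:}mf$ is of manageable size and I would read off its Sylow structure from the explicit Frobenius action of $C_{mf}$ on $N$: at $(2,8)$ the Sylow $3$-subgroup of $M = 9{:}6$ is $3_{-}^{1+2}$, yielding case (iii); at $(3,4)$ the Sylow $3$-subgroup of $M = 21{:}6$ is elementary abelian of order $9$ and contains no transitive nilpotent subgroup beyond $C_{21}$; at $(6,2)$ the unique transitive nilpotent subgroup of $M = 63{:}6$ is $C_{63}$.

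The principal obstacle is the centralizer computation in the generic step: it is essential that we work with a primitive prime divisor $r$ satisfying $\mathrm{ord}_r(p) = mf$ rather than merely the weaker $\mathrm{ord}_r(q) = m$ of being a primitive prime divisor of $q^m-1$. When these differ, $C_M(y)$ can properly contain $N$ and the conclusion $A = N$ fails --- this is precisely the mechanism that produces the sporadic $3_{-}^{1+2}$ example at $(m,q) = (2,8)$. Isolating this strengthened Zsigmondy condition, and then handling by hand the four (three small plus one Mersenne family) exceptions to it, is the substance of the proof.
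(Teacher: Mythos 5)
Your proposal is correct and follows essentially the same route as the paper: reduce to a primitive prime divisor $r$ of $p^{mf}-1$ (not merely of $q^m-1$), use nilpotency to place the Sylow $r$-subgroup in the centre and conclude $A\leqs C_M(y)=N$ in the generic case, then handle the Zsigmondy exceptions $p^{mf}=2^6$ and $m=2$ with $q$ Mersenne by direct inspection. Your write-up is in fact more detailed than the paper's (which compresses the exceptional cases into ``it is straightforward to check''), and you correctly isolate the mechanism producing the $3_{-}^{1+2}$ example at $(m,q)=(2,8)$.
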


\begin{proof}
This is very similar to the proof of the previous lemma. Suppose $A$ is transitive, so $|A|$ is divisible by $\frac{q^m-1}{q-1}$. If there exists a primitive prime divisor $r$ of $p^{mf}-1$, then by considering the centralizer of an element in $A$ of order $r$ we deduce that (i) holds. Now assume $p^{mf}-1$ does not have a primitive prime divisor,  so either $p^{mf} = 2^6$, or $m=2$ and $q$ is a Mersenne prime. In the former case, $q \in \{2,4,8\}$ and it is straightforward to check that either (i) holds, or $q=8$ and $A < C_9{:}C_6 < {\rm P\Gamma L}_{2}(8)$ is an extraspecial group of type $3_{-}^{1+2}$ as in (iii). Finally, if $m=2$ and $q$ is a Mersenne prime, then $\frac{q^m-1}{q-1}{:}mf = D_{2(q+1)}$ and we deduce that (i) or (ii) holds. Specifically, if $D_{2(q+1)} = \la x,y \ra$, where $|x|=q+1$ and $y$ is an involution fixing a $1$-space, then $D_{q+1}=\la x^2,xy\ra$ is transitive and $D_{q+1} = \la x^2,y\ra$ is intransitive.
\end{proof}

Next we establish a key result for the analysis of product-type groups. 

\begin{prop}\label{nil--abel}
Let $G$ be an almost simple primitive group of degree $n$ with socle $G_0$ and suppose $H$ is a nilpotent transitive subgroup of $G$. Then one of the following holds:
\begin{itemize}\addtolength{\itemsep}{0.2\baselineskip}
\item[{\rm (i)}] $G$ is $2$-transitive and $H=C_n$ is regular.
\item[{\rm (ii)}] $G_0=A_n$ and $H$ is transitive on $\{1, \ldots, n\}$.
\item[{\rm(iii)}] $G_0={\rm PSL}_2(q)$, $n=q+1$ and $H=D_{2(q+1)}$ or $D_{q+1}$, where $q$ is a Mersenne prime.
\item[{\rm (iv)}] $(G,H,n)=({\rm P\Gamma L}_{2}(8),3_{-}^{1+2},9)$, $({\rm M}_{12}, C_6\times C_2,12)$ or $({\rm M}_{24},D_8 \times C_3,24)$.
\item[{\rm (v)}] $G_0 = {\rm PSp}_{4}(3)$, $n=27$ and $H=3_{\pm}^{1+2}$ or $3.3_{+}^{1+2}$.
\end{itemize}
In particular, $G$ is $2$-transitive in cases {\rm (i)}--{\rm (iv)}.
\end{prop}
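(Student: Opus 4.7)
The plan is to reduce everything to the classification of solvable factorizations already established in this paper. Let $\Omega$ denote the primitive action of degree $n$ and let $K = G_\omega$ be a point stabilizer, so $K$ is maximal and core-free. Transitivity of $H$ gives $G = HK$, and since $H$ is nilpotent it is solvable. Thus the triple $(G, H\cap G_0, K\cap G_0)$ appears in the classification underpinning Theorem \ref{t:main2}: either a Type I entry of Table \ref{tab}, a Type II entry of Table \ref{tab3}, a Type III entry of Table \ref{tab4}, or a non-classical entry from \cite[Propositions 4.3--4.4]{LX}. The strategy is then a case-by-case inspection, asking in each row which nilpotent transitive overgroups $H \leq G$ are actually realized.

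I would split the argument by the socle $G_0$. For $G_0 = A_n$ in the natural action, any transitive $H$ is allowed, producing case (ii); other primitive actions of $A_n$ or $S_n$ are covered by \cite[Proposition 4.3]{LX} and are $2$-homogeneous, hence reduce to case (i) via the cyclic regular subgroups of $2$-transitive groups. Sporadic socles are handled by direct inspection of \cite[Proposition 4.4]{LX}, extracting cyclic regular examples for case (i) along with the $M_{12}$ and $M_{24}$ entries of case (iv); exceptional $G_0$ produces no factorization by \cite{HLS}. For classical Type I factorizations, Theorem \ref{t:main} forces $H$ to contain a Frobenius-type piece $q^s{:}(q^s-1)$ coming from the unipotent radical in Cases 2--9 of Table \ref{tab}, which is never nilpotent; so only Case 1 ($G_0 = \PSL_n(q)$, action on $1$-spaces) survives, and Lemma \ref{metac-nil'} enumerates the nilpotent transitive subgroups as Singer cycles (case (i)), dihedral groups for $n=2$ with $q$ Mersenne (case (iii)), and the extraspecial $3_-^{1+2} \leq \PGammaL_2(8)$ (the first entry in case (iv)). Finally, for Type II and Type III factorizations I would scan Tables \ref{tab3} and \ref{tab4}: most listed factors are patently non-nilpotent, and the only rows admitting a nilpotent transitive refinement are the $\PSL_2(q)$ rows (absorbed into cases (i)/(iii)) and the $\PSp_4(3)$ rows (Cases 10--11 of Table \ref{tab3}), which supply case (v).

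The main obstacle will be verifying that the full list $3_+^{1+2},\, 3_-^{1+2},\, 3.3_+^{1+2}$ in case (v) exhausts the nilpotent transitive subgroups of $\PSp_4(3)$ of degree $27$. Here I would exploit the isomorphism $\PSp_4(3) \cong \mathrm{U}_4(2)$ and the symplectic geometry on the $27$ isotropic points (equivalently, on the cosets of $2^4{:}A_5$) to pin down the Sylow $3$-subgroups, which have order $81$ and shape $3.3_+^{1+2}$; a short calculation, most conveniently via \textsc{Magma} \cite{magma}, then shows that the transitive subgroups of order $27$ are exactly the two extraspecial conjugacy classes $3_+^{1+2}$ and $3_-^{1+2}$ (both regular), while the full Sylow $3.3_+^{1+2}$ is transitive with point stabilizer of order $3$. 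To close the proposition, I would note that in cases (i)--(iv) the group $G$ is $2$-transitive, either by the classical classification of $2$-transitive groups with a regular subgroup or by direct verification for the small sporadic examples, whereas $\PSp_4(3)$ on $27$ points is only simply primitive, yielding the final clause.
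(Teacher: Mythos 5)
Your overall strategy coincides with the paper's: pass to the factorization $G=HK$ with $K$ a point stabilizer, and scan the Type I/II/III classification together with \cite[Propositions 4.3 and 4.4]{LX}, using Lemma \ref{metac-nil'} for the action of ${\rm PSL}_n(q)$ on $1$-spaces. However, two of your eliminations contain genuine gaps.

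First, for $G_0=A_n$ you dispose of the non-natural primitive actions by asserting that they ``are $2$-homogeneous, hence reduce to case (i)''. This fails for case (b) of \cite[Proposition 4.3]{LX}: there $G=S_n$ or $A_n$ with $n=p^a$ acts on unordered pairs, so the degree is $\binom{n}{2}$ and the action is simply primitive, not $2$-transitive; it is $H\leqs{\rm A\Gamma L}_1(p^a)$ that is $2$-homogeneous on $n$ points, not $G$ on its actual domain. One must prove that no nilpotent transitive subgroup exists in this action. The paper does this by noting that $|H|$ must be divisible by $|G:K|=\frac{1}{2}p^a(p^a-1)$, which forces $O_p(H)=p^a$ and forces $H/O_p(H)$ to contain a nontrivial element of ${\rm GL}_1(p^a)$; such an element cannot centralize $O_p(H)$, contradicting nilpotency. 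As written, your argument would wrongly absorb these actions into case (i).

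Second, your elimination of Type I Cases 2--9 rests on the claim that Theorem \ref{t:main} ``forces $H$ to contain a Frobenius-type piece $q^s{:}(q^s-1)$''. Theorem \ref{t:main} only gives a divisibility condition on $|H|$, not a structural containment, and the claim is false in Case 3: there $q$ is even, so $q^m-1$ is odd, and Lemma \ref{metac-nil} yields $H/O_2(H)\cong C_{q^m-1}$; nilpotency then gives $H=O_2(H)\times C_{q^m-1}$, which is abelian, not Frobenius. The paper eliminates this candidate by observing that a transitive abelian subgroup is regular, and regularity is incompatible with the setup -- either because $|H|$ would then equal $\frac{1}{2}q^m(q^m-1)$, contradicting the divisibility of $|H|$ by $q^m(q^m-1)$, or because Theorem \ref{t:main3}(iv) would force $K\cap G_0=\Omega_{2m}^{-}(q)$ while maximality of $K$ gives $K\cap G_0={\rm O}_{2m}^{-}(q)$. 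An argument of this kind is needed to close Cases 2--9 (the paper spells out Cases 2 and 3 and indicates the rest are similar). The remaining ingredients of your proposal -- the {\sc Magma} verification for ${\rm PSp}_4(3)$ on $27$ points and the scan of the Type II/III tables -- match the paper's treatment.
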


\begin{proof}
Since we have a factorization $G=HK$, where $K$ is a point stabilizer, we can proceed by inspecting the possibilities for $(G,H)$ determined in \cite{LX}. Set $\Delta = G/K$.

Set $H_0 = H \cap G_0$ and first assume $G_0$ is a classical group over $\mathbb{F}_q$, where $q=p^f$ with $p$ a prime. We consider Type I, II and III factorizations in turn (see Section \ref{s:intro} for the definitions).
To begin with, let us assume $G=HK$ is a Type I factorization, in which case the possibilities for $G$ and $H$ are described in Table~\ref{tab}. 
Let $A \leqs G_0$ be the overgroup of $H_0$ given in Table~\ref{tab}.

First consider Case~1 of Table~1. Here $G_0={\rm PSL}_m(q)$ and $H \leqs \frac{q^m-1}{q-1}{:}mf$ acts transitively on $\Delta$, which we identify with the 
set of $1$-dimensional subspaces of $\mathbb{F}_q^m$. Now $G$ acts $2$-transitively on $\Delta$ and by applying Lemma~\ref{metac-nil'}, we deduce that either (i) or (iii) holds, or we are in the special case $(G,H,n)=({\rm P\Gamma L}_{2}(8),3_{-}^{1+2},9)$ recorded in (iv).

Now consider Case 2 of Table~1, so $G_0 = {\rm PSL}_4(q)$ and $A=q^3{:}{\frac{q^3-1}{d}}{:}3$ with $d=(4,q-1)$. By Theorem \ref{t:main}, we know that $|H|$ is divisible by $q^3(q^3-1)/(2,q-1)$. Now
\[
H<P_1\leqs \AGammaL_3(q)=q^3{:}(\GL_3(q).f)
\]
and $O_p(H)=O_p(P_1)$ is the full unipotent radical of the parabolic subgroup $P_1$ (here $O_p(H)$ denotes the largest normal $p$-subgroup of $H$).
It follows that $H \leqs \AGammaL_1(q^3)=q^3{:}\GammaL_1(q^3)$. But $\AGL_1(q^3)=q^3{:}C_{q^3-1}$ is a Frobenius group, and so is $H\cap\AGL_1(q^3)$, whence $H$ is not nilpotent and we have reached a contradiction.

Next let us turn to Case 3 of Table~1, so $G_0 ={\rm Sp}_{2m}(q)$ and $A = q^{\frac{1}{2}m(m+1)}{:}(q^m-1).m$ with $q$ even. By Theorem \ref{t:main}, $|H|$ is divisible by $q^m(q^m-1)$. Let $L=H/O_2(H)$ and note that $O_2(H)$ is elementary abelian. In addition, observe that $K \cap G_0 = {\rm O}_{2m}^{-}(q)$ since $K$ is a maximal subgroup of $G$.
Then
\[
L\leqs {\rm \Gamma L}_{1}(q^m) = {\rm GL}_{1}(2^{mf}).mf
\]
and since $q=2^f$ is even, we deduce that $L=C_{q^m-1}$ is cyclic by Lemma~\ref{metac-nil}. Since $H$ is nilpotent, it follows that $H=O_2(H)\times C_{q^m-1}$ is abelian and therefore regular on $\Delta$. But this is incompatible with Theorem \ref{t:main3} (specifically, the condition $K \cap G_0 = \O_{2m}^{-}(q)$ in part (iv) of Theorem \ref{t:main3}). Similar reasoning eliminates all the other  cases in Table~\ref{tab}.

Next assume $G=HK$ is a Type II factorization. Here the possibilities for $(G,H)$ are listed in \cite[Table 1.2]{LX}, which gives an overgroup $A \leqs G_0$ of $H_0$. In addition, by Lemma \ref{l:ell2}, $|H|$ is divisible by the integer $\ell$ in Table \ref{tab3}. With the exception of Case 1 (which is $2$-transitive) and Cases 10 and 11 (which give rise to the examples recorded in part (v) of the proposition), it is straightforward to check that $H$ does not contain a nilpotent subgroup of order divisible by $\ell$. Similarly, if $G=HK$ is a Type III factorization, then the desired result follows immediately from \cite[Proposition 4.1]{LX} (note that $G$ is $2$-transitive in the cases labelled 0, 1 and 4 in Table \ref{tab4}).

To complete the proof, we may assume $G_0$ is a sporadic or alternating group (recall that no almost simple exceptional group of Lie type admits a solvable factor). For $G_0$ sporadic, we apply \cite[Proposition 4.4]{LX}, which gives rise to the two examples appearing in part~(iv). Finally, let us assume $G_0 = A_n$ is an alternating group. Here we apply \cite[Proposition 4.3]{LX}, which describes the possibilities for $(G,H)$. We will refer to cases (a)--(f) of
\cite[Proposition 4.3]{LX}. Clearly, (ii) holds in case (a), and $H$ is not nilpotent in (c). Cases (d) and (e) can be checked directly; there is an example with $G = S_6$, $H=C_6$ and $K={\rm PGL}_{2}(5)$, but here the action of $G$ on $\Delta$ is $2$-transitive and (i) holds. Similarly, in (f) the only valid possibility is $G = A_8$, $H=C_{15}$ and $K = {\rm AGL}_{3}(2)$, and once again we find that $G$ is $2$-transitive. Finally, let us consider part (b). Here $G = S_n$ or $A_n$, $K = (S_{n-2} \times S_2) \cap G$ and $n=p^a$ is a prime power. Moreover, the nilpotency of $H$ implies that $H \leqs {\rm A\Gamma L}_{1}(p^a) = p^a{:}({\rm GL}_{1}(p^a).a)$, 
and $|H|$ must be divisible by $|G:K| = \frac{1}{2}p^a(p^a-1)$.
It follows that $O_p(H)=p^a$ and $H/O_p(H)\leqs\GL_1(p^a).a$ contains a nontrivial element of $\GL_1(p^a)$, which does not centralize $O_p(H)$. This contradicts the nilpotency of $H$ and the proof is complete.
\end{proof}

By combining the previous result (and its proof) with Theorem \ref{t:main3}, it is straightforward to read off the almost simple primitive groups with a nilpotent regular subgroup.

\begin{cor}\label{c:nil--abel}
Let $G$ be an almost simple primitive group of degree $n$ with socle $G_0$ and suppose $H$ is a nilpotent regular subgroup of $G$. Then one of the following holds:
\begin{itemize}\addtolength{\itemsep}{0.2\baselineskip}
\item[{\rm (i)}] $G_0=A_n$ and $H$ is transitive on $\{1, \ldots, n\}$.
\item[{\rm (ii)}] $G_0 = {\rm PSL}_{a}(q)$ and $H$ is a cyclic group of order 
$\frac{q^a-1}{q-1}$.
\item[{\rm(iii)}] $G_0={\rm PSL}_2(q)$ and $H=D_{q+1}$, where $q$ is a Mersenne prime.
\item[{\rm (iv)}] $(G,H)$ is one of the following:
\[
\mbox{$({\rm PSL}_{2}(11),C_{11})$, $({\rm M}_{11},C_{11})$, $({\rm M}_{12}, C_6\times C_2)$, $({\rm M}_{23},C_{23})$ or $({\rm M}_{24},D_8 \times C_3)$.}
\]
\item[{\rm (v)}] $G_0 = {\rm PSp}_{4}(3)$ and $H=3_{\pm}^{1+2}$.
\end{itemize}
\end{cor}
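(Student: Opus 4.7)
The corollary is a direct consequence of Proposition \ref{nil--abel}: a regular subgroup is in particular transitive, so $(G,H)$ falls into one of the cases (i)--(v) of that proposition, and we impose the additional constraint $|H|=n$.

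Cases (ii)--(v) of Proposition \ref{nil--abel} are essentially immediate. Case (ii) yields part (i) of the corollary. In case (iii), the requirement $|H|=q+1$ rules out $H=D_{2(q+1)}$ and leaves only $H=D_{q+1}$, matching corollary (iii). In case (iv) of the proposition, the orders $|C_6\times C_2|=12$ and $|D_8\times C_3|=24$ match the respective degrees, while $|3_-^{1+2}|=27\ne 9$ excludes the ${\rm P\Gamma L}_2(8)$ example; this contributes the two Mathieu entries to corollary (iv). In case (v), only $H=3_\pm^{1+2}$ has order $27=n$ (the group $3.3_+^{1+2}$ has order $81$), yielding corollary (v).

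The remaining case (i) of Proposition \ref{nil--abel}, where $G$ is $2$-transitive with a cyclic regular subgroup $H=C_n$, requires an enumeration of the almost simple $2$-transitive groups possessing such a subgroup. I would carry this out via Theorem \ref{t:main3}, applied to the exact factorization $G=HK$ with $H$ cyclic (hence solvable) and $K$ a point stabilizer. Part (i) of that theorem contributes the cyclic regular subgroups of $A_n$ (absorbed into corollary (i)); part (iii), combined with Lemma \ref{metac-nil'} specialized to cyclic subgroups, yields the Singer-cycle examples of corollary (ii); and the cyclic $H$ entries in Table \ref{tab-sharp} supply the sporadic triples $({\rm PSL}_2(11),C_{11})$, $({\rm M}_{11},C_{11})$, and $({\rm M}_{23},C_{23})$, completing corollary (iv). Parts (ii) and (iv) of Theorem \ref{t:main3} contribute nothing new, by the nilpotency arguments already carried out in the proof of Proposition \ref{nil--abel}. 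No serious obstacle arises: with Proposition \ref{nil--abel}, Theorem \ref{t:main3}, and Table \ref{tab-sharp} in hand, the argument reduces to a finite case check of orders.
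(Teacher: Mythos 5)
Your proposal is correct and follows essentially the same route as the paper, which derives the corollary by combining Proposition \ref{nil--abel} (and its proof) with Theorem \ref{t:main3}: the order condition $|H|=n$ prunes cases (ii)--(v) of the proposition exactly as you describe, and the $2$-transitive cyclic case (i) is resolved by reading off the cyclic entries of Table \ref{tab-sharp} together with parts (i) and (iii) of Theorem \ref{t:main3}. Your case check (excluding $3_{-}^{1+2}$ of order $27$ in degree $9$, $3.3_{+}^{1+2}$ of order $81$ in degree $27$, and $D_{2(q+1)}$) matches the paper's intended reading.
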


\begin{prop}\label{pa-case}
Let $G \leqs G_1 \wr S_d$ be a primitive product-type group, where $G_1$ is almost simple and primitive. If $G$ has a nilpotent regular subgroup, then either $G_1$ is $2$-transitive, or $G_1$ has socle $\PSp_4(3)$ acting on $27$ points.
\end{prop}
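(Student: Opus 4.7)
The plan is to extract from $H$ a nilpotent transitive subgroup of the almost simple primitive group $G_1$ on $\Delta$; once such a subgroup is in hand, Proposition~\ref{nil--abel} immediately furnishes the desired dichotomy, since its cases (i)--(iv) all force $G_1$ to be $2$-transitive while case (v) delivers precisely $\soc(G_1) = \PSp_4(3)$ acting on $27$ points.

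First, I would set $N = H \cap G_1^d$ and $\bar H = H/N \leqs S_d$; both groups are nilpotent, being a normal subgroup and a quotient of $H$. My first reduction is to the case in which $\bar H$ is transitive on the coordinate set $\{1, \ldots, d\}$: if $\bar H$ has orbits $\Sigma_1, \ldots, \Sigma_r$ on $\{1,\ldots,d\}$, then $H$ preserves the product decomposition $\Delta^d = \prod_j \Delta^{\Sigma_j}$, and the projection $\pi_{\Sigma_j}(H) \leqs G_1 \wr {\rm Sym}(\Sigma_j)$ is nilpotent and transitive on $\Delta^{\Sigma_j}$. Since Proposition~\ref{nil--abel} requires only transitivity (not regularity), we may replace $d$ by $|\Sigma_j|$ and $H$ by $\pi_{\Sigma_j}(H)$, and thereby assume $\bar H$ is transitive on $\{1,\dots,d\}$.

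Assuming now that $\bar H$ is transitive on $\{1, \ldots, d\}$, the heart of the argument is to show that $\pi_1(N) \leqs G_1$ is transitive on $\Delta$. Since $H$ is regular, $N$ acts freely on $\Delta^d$, so each $N$-orbit has size $|N| = m^d/|\bar H|$ with $m = |\Delta|$. Moreover $N \leqs \pi_1(N) \times \cdots \times \pi_d(N)$, and conjugation by elements of $H$ permutes the factors $\pi_i(N)$ according to the action of $\bar H$ on $\{1,\ldots,d\}$, so the subgroups $\pi_i(N)$ are pairwise $G_1$-conjugate. Writing $\ell$ for the common size of the orbit of $\pi_i(N)$ through an appropriate basepoint, the $N$-orbit of $(x_0,\ldots,x_0)$ is contained in a box of size $\ell^d$, yielding the inequality $|\bar H| \geqs (m/\ell)^d$. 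This, combined with the classical bound on the order of a transitive nilpotent subgroup of $S_d$ (which factorises as a direct product of transitive $p$-subgroups of $p$-power degree, each lying in a Sylow $p$-subgroup of $S_d$), will force $\ell = m$, i.e.\ $\pi_1(N)$ is transitive on $\Delta$.

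The conclusion then follows by applying Proposition~\ref{nil--abel} to the pair $(G_1,\pi_1(N))$. The principal obstacle is precisely the transitivity step for $\pi_1(N)$ when $\bar H \neq 1$: the naive coordinate projection $H \to G_1$ is no longer a group homomorphism once $\bar H$ moves coordinates, so one is forced to work through $N$, and the orbit-counting bound $|\bar H| \geqs (m/\ell)^d$ has to be reconciled with the rather large nilpotent subgroups of $S_d$ (for example the Sylow $2$-subgroup of $S_{2^a}$ has order $2^{2^a-1}$), so the residual small-degree configurations will need to be examined in tandem with the almost-simple structure of $G_1$ and the explicit list in Proposition~\ref{nil--abel}.
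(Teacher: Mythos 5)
Your top-level strategy is the same as the paper's: extract a nilpotent transitive subgroup of $G_1$ on $\Delta$ and then invoke Proposition~\ref{nil--abel}. The paper does the extraction in one line by citing \cite[Proposition 2.5]{LPS-3}; you instead try to build the subgroup directly from the regular subgroup $H$, which would make the step self-contained, but as written your argument has two genuine gaps. The first is structural: after your opening reduction you have replaced $H$ by the projection $\pi_{\Sigma_j}(H)$, which is only transitive, yet the very next steps (``since $H$ is regular, $N$ acts freely'' and $|N|=m^d/|\bar{H}|$) rely on regularity of $H$. The reduction should simply be dropped: keep the original regular $H$, let $s_1,\ldots,s_r$ be the orbit lengths of $\bar{H}$ on the coordinates, and use the bound $|\bar{H}|\leqs \prod_j 2^{s_j-1}=2^{d-r}$ coming from the classical estimate you quote, applied to each transitive constituent.

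The second gap is the assertion that $|\bar{H}|\geqs(m/\ell)^d$ ``will force $\ell=m$''. With $\ell$ the length of the orbit of a single basepoint $x_0$ this does not follow: the inequalities only give $\ell>m/2$, and a subgroup of $G_1$ can have an orbit of length greater than $m/2$ without being transitive, so the step you yourself flag as the principal obstacle is genuinely open in your write-up. It can be closed as follows. In each coordinate $i$ choose $x_i\in\Delta$ with $|x_i^{\pi_i(N)}|=\ell_i$ \emph{minimal}, and bound the $N$-orbit of $(x_1,\ldots,x_d)$, which has length $|N|$ by semiregularity, inside the box $\prod_i x_i^{\pi_i(N)}$. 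If every $\ell_i\leqs m/2$ then $|N|\leqs (m/2)^d$, whereas $|N|=m^d/|\bar{H}|\geqs m^d/2^{d-r}>(m/2)^d$ since $r\geqs 1$, a contradiction. Hence some $\pi_i(N)$ has \emph{minimal} orbit length exceeding $m/2$, and since its orbits partition $\Delta$ it must be transitive; it is nilpotent as a quotient of the subgroup $N$ of $H$, so Proposition~\ref{nil--abel} applies and your conclusion follows. With this repair (and the reduction removed) your argument is a correct, self-contained substitute for the appeal to \cite{LPS-3}; note it uses regularity of $H$ essentially, whereas the cited result needs only transitivity.
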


\begin{proof}
By \cite[Proposition 2.5]{LPS-3}, $G_1$ has a nilpotent transitive subgroup. Then Proposition \ref{nil--abel} implies that either $G_1$ is $2$-transitive, or $G_1$ has socle ${\rm PSp}_{4}(3)$ and a regular subgroup $3_{+}^{1+2}$. The result follows.
\end{proof}

We have already noted that  $(3_{\pm}^{1+2})^d$ is a regular subgroup of ${\rm PSp}_4(3)\wr S_d$ in its product action of degree $27^d$. However, ${\rm PSp}_4(3)\wr S_d$ is \emph{not} the blow-up of a $2$-transitive group (since the action of ${\rm PSp}_{4}(3)$ on the set of cosets of $2^4{:}A_5$ is simply primitive) and thus $(3_{\pm}^{1+2})^d$ is \emph{not} a generalized B-group.

\begin{example}\label{3^2:3}
We claim that the primitive group ${\rm PSp}_4(3)\wr S_d$ in its product action of degree $27^d$ has a regular subgroup isomorphic to $(3_+^{1+2})^e\times(3^6{:}3_+^{1+2})^f$ for any non-negative integers $e$ and $f$ with $e+3f=d$.

To see this, we may assume that $e=0$ and $f=1$, so $d=3$. let $P_i=X_i{:}Y_i\cong3_+^{1+2}$, where $1\leqslant i\leqslant 3$, $X_i\cong C_3^2$ and $Y_i=\la y_i\ra\cong C_3$.
Let
\[
(P_1\times P_2\times P_3){:}\la \pi\ra\cong 3_+^{1+2}\wr C_3,
\]
where $\pi: y_1\mapsto y_2$, $y_2\mapsto y_3$ and $y_3\mapsto y_1$, and set
\[
H=(X_1\times X_2\times X_3){:}(\la y_1y_2^{-1},y_2y_3^{-1}\ra{:}\la y_1y_2y_3\pi\ra).
\]
Then $\la y_1y_2^{-1},y_2y_3^{-1}\ra{:}\la y_1y_2y_3\pi\ra\cong 3_+^{1+2}$ and
$H \cong 3^6{:}3_+^{1+2}$.
Suppose that $y_1y_2y_3\pi$ fixes a point $x_1x_2x_3$.
Then
\[
x_1x_2x_3=(x_1x_2x_3)^{y_1y_2y_3\pi}=(x_1y_1x_2y_2x_3y_3)^\pi=x_2y_2x_3y_3x_1y_1
\]
and thus $y_1y_2y_3=1$, which is not possible. It follows that $H$ is regular and this  justifies the claim.

We remark that ${\rm PSp}_4(3)\wr S_d$ appears to have many regular subgroups in its product action of degree $27^d$. For example, if $d=2$ then a {\sc Magma} calculation shows that there are $19$ conjugacy classes of regular subgroups ($13$ up to isomorphism).
\end{example}

Finally, we are ready to prove Theorems~\ref{nil-B-gps} and \ref{t:5}.

\begin{proof}[Proof of Theorem~\ref{nil-B-gps}]
Let $G$ be a finite primitive permutation group of degree $n$ with socle $T$ and a nilpotent regular subgroup $H$. Then by the observations made at the beginning of this section, together with Propositions \ref{diag-case} and \ref{pa-case}, we deduce that one of the following holds:
\begin{itemize}\addtolength{\itemsep}{0.2\baselineskip}
\item[{\rm(a)}] $G$ is affine; 
\item[{\rm(b)}] $G \leqs G_1 \wr S_d$ is of product-type, where $G_1$ is a primitive group with socle $\PSp_4(3)$ acting on $27$ points;
\item[{\rm(c)}] $G$ is a blow-up of an almost simple $2$-transitive group.
\end{itemize}

If $G$ is an affine group of degree $p^d$ with $p$ prime and $d\geqslant1$, then $H$ is a class-mate of $C_p^d$ as in part (ii) of Theorem~\ref{nil-B-gps}. Similarly, if (b) holds then $H$ is a class-mate of $(3_+^{1+2})^d$ as in part (iv) of Theorem~\ref{nil-B-gps}. To complete the proof, we may assume that (c) holds. 

Let us first assume $G$ is almost simple. By applying Corollary~\ref{c:nil--abel}, either $T=A_n$ and $H$ is any nilpotent group of order $n$ as in part (i) of Theorem~\ref{nil-B-gps}, or $G$ is one of the groups described in part (iii) of Theorem~\ref{nil-B-gps} (with $d=1$). 

Finally, suppose $G \leqs G_1 \wr S_d$ is a blow-up of an almost simple $2$-transitive group $G_1$ of degree $m$, so $n=m^d$ with $d \geqs 2$. Write $T=S^d$, where $S$ is the socle of $G_1$. As observed in the proof of Proposition \ref{pa-case}, $G_1$ has a nilpotent transitive subgroup $H_1$ and therefore the possibilities for $G_1$ and $H_1$ are described in Proposition~\ref{nil--abel}. We inspect each case in turn.

First assume that $H_1=C_m$ as in part (i) of Proposition~\ref{nil--abel}. Then $H_1$ is regular and thus $G_1$ is one of the groups in parts (i), (ii) or (iv) of Corollary~\ref{c:nil--abel}. We deduce that $H$ is a class-mate of $C_m^d$ and either $S=A_m$, or $(S,m) = ({\rm PSL}_{a}(q),\frac{q^a-1}{q-1})$, $({\rm PSL}_{2}(11),11)$, $({\rm M}_{11},11)$ or $({\rm M}_{23},23)$. All of these cases appear in part (iii)(a) of Theorem~\ref{nil-B-gps}.

Next suppose $G_1$ and $H_1$ are as in part (ii) of Proposition~\ref{nil--abel}. Here $S=A_m$ and $H$ is a class-mate of $C_m^d$, which we have already noted is one of the cases in part (iii)(a) of Theorem~\ref{nil-B-gps}. Similarly, if part (iii) of Proposition~\ref{nil--abel} is satisfied, then $S = {\rm PSL}_{2}(q)$, $m=q+1$ and $H$ is a class-mate of $C_m^d$. Finally, if $G_1$ and $H_1$ are as in part (iv) of Proposition~\ref{nil--abel} then either $S = {\rm PSL}_{2}(8)$ and $H$ is a class-mate of $C_9^d$, or $S = {\rm M}_{12}$ or ${\rm M}_{24}$ and $H$ is a class-mate of $(C_6 \times C_2)^d$ or $(D_8 \times C_3)^d$, respectively. All of these cases are covered by part (iii) of Theorem~\ref{nil-B-gps}. This completes the proof.
\end{proof}

\begin{proof}[Proof of Theorem~\ref{t:5}]
Let $H$ be a finite nilpotent group and let $G$ be a primitive permutation group containing $H$ as a regular subgroup. Then as explained in the first paragraph in the proof of Theorem~\ref{nil-B-gps}, $G$ is either affine (and thus $H$ is a class-mate of $C_p^d$ for some prime $p$ and $d \geqs 1$), or $G$ is a blow-up of an almost simple primitive group with socle ${\rm PSp}_{4}(3)$ acting on $27$ points (in which case $H$ is a class-mate of $(3_{+}^{1+2})^d$), or $G$ is the blow-up of a $2$-transitive group. The desired conclusion follows immediately.
\end{proof}

\end{document}